\journal{Mathematics and Computers in Simulation}
\newtheorem{theorem}{Theorem}
\newtheorem{proposition}[theorem]{Proposition}
\newtheorem{lemma}[theorem]{Lemma}
\newtheorem{corollary}[theorem]{Corollary}
\newtheorem{conjecture}[theorem]{Conjecture}
\newtheorem{definition}[theorem]{Definition}
\newtheorem*{remark}{Remark}
\newcommand{\itbf}[1]{\textit{\textbf{#1}}}
\newcommand{\C}{\mathbb {C}}
\newcommand{\R}{\mathbb {R}}
\newcommand{\N}{\mathbb {N}}
\newcommand{\Z}{\mathbb {Z}}
\def\BFU {\mathbf{U}}
\def\BFV {\mathbf{V}}
\def\BFx {\mathbf{x}}
\def\BFy {\mathbf{y}}
\def\BFi {\textit{\bfseries i}}
\def\BFk {\textit{\bfseries k}}
\def\BFe {\textit{\bfseries e}}
\def\BFl {\textit{\bfseries l}}
\def\BFr {\textit{\bfseries r}}
\DeclareMathOperator{\vol}{Vol}
\DeclareMathOperator{\wal}{wal}
\DeclareMathOperator{\supp}{supp}
\DeclareMathOperator{\var}{Var}
\DeclareMathOperator{\cov}{Cov}
\newcommand*\pFqskip{8mu}
\newcommand*\pFq{\begingroup
        \catcode`\,\active
        \def ,{\mskip\pFqskip\relax}%
        \dopFq
}
\def\dopFq#1#2#3#4#5{%
        {}_{#1}F_{#2}\biggl[\genfrac..{0pt}{}{#3}{#4};#5\biggr]%
        \endgroup
}
\begin{document}

\begin{frontmatter}

\title{Walsh functions, scrambled $(0,m,s)$-nets, and negative covariance: applying symbolic computation to quasi-Monte Carlo integration}

\author[mymainaddress]{Jaspar Wiart\fnref{ricam,jku}}
\author[mymainaddress]{Elaine Wong\fnref{elaine}\corref{mycorrespondingauthor}}
\cortext[mycorrespondingauthor]{Corresponding author}
\ead{elaine.wong@ricam.oeaw.ac.at}

\address[mymainaddress]{Altenberger Straße 69, 4040 Linz, Austria}
\fntext[ricam]{Austrian Academy of Sciences, Johann Radon Institute (RICAM)}
\fntext[jku]{Johannes Kepler University, Linz}

\begin{abstract}
We investigate base $b$ Walsh functions for which the variance of the integral estimator based on a scrambled $(0,m,s)$-net in base $b$ is less than or equal to that of the Monte-Carlo estimator based on the same number of points.  First we compute the Walsh decomposition for the joint probability density function of two distinct points randomly chosen from a scrambled $(t,m,s)$-net in base $b$ in terms of certain counting numbers and simplify it in the special case $t$ is zero. Using this, we obtain an expression for the covariance of the integral estimator in terms of the Walsh coefficients of the function. Finally, we prove that the covariance of the integral estimator is negative when the Walsh coefficients of the function satisfy a certain decay condition. To do this, we use creative telescoping and recurrence solving algorithms from symbolic computation to find a sign equivalent closed form expression for the covariance term.
\end{abstract}

\begin{keyword}
quasi-Monte Carlo integration, scrambled digital nets, Walsh functions, symbolic computation, creative telescoping, symbolic summation
\MSC[2010] 33F10 \sep  65C99 \sep 11K99
\end{keyword}

\end{frontmatter}

\section{Introduction}

\subsection{History of the Problem}

Quasi-Monte Carlo methods use low discrepancy point sets and sequences to estimate multidimensional integrals over the unit hypercube:
\[\int_{[0,1)^s}f(x)dx.\]
Roughly speaking, discrepancy measures the overall deviation between the number of points from a point set that are contained in axis-parallel boxes $[\BFx,\BFy)$ with the number of points that should be in those boxes (i.e.\ the number of points in the point set divided by the volume of the box); a smaller discrepancy means a better point set.  A class of commonly used point sets, $(t,m,s)$-nets requires that a certain class of boxes (elementary intervals) of a certain size contain exactly the right number of points.

\begin{figure}[ht]
\centering
\includegraphics[scale=0.25]{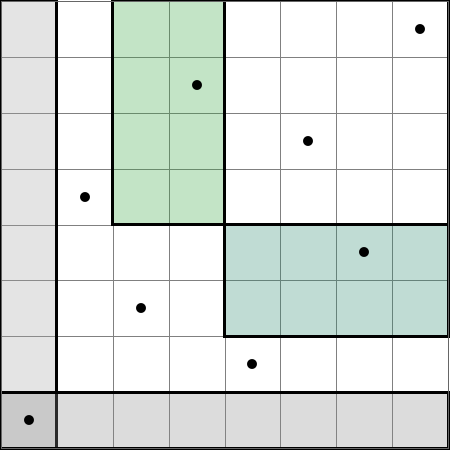}
\caption{An example of a point set on $[0,1)^2$ that contains one point in axis-parallel boxes of four different types.}
\end{figure}

By introducing some randomness into the point sets we can improve the uniform distribution of points and gain access to probabilistic error estimates. It is known that  the convergence of the variance of an estimator based on a scrambled $(t,m,s)$-net (as the number of points increases) is faster than that of the independent and uniformly selected random points in a Monte Carlo (MC) estimator~\cite[Theoerm 3.9]{DickPillichshammer10}. Thus, for any given function in $L^2([0,1)^s)$, that the randomized quasi-Monte Carlo (RQMC) estimator will eventually outperform the MC estimator.  However, it is not clear how many points are needed before this good behavior happens and until it happens, the RQMC estimator might be worse for a particular function than that of the  MC estimator.

In 2018, Lemieux~\cite{Lemieux17} proposed a framework with which to study when the RQMC estimator does no worse than MC. This is based on the concept of negative dependence. Following this, it was shown in Wiart et al.~\cite{WiartLemieux19}, that scrambled $(0,m,s)$-nets do no worse for functions that are  ``quasi-monotone''. They did this by showing that scrambled $(0,m,s)$-nets are negative lower orthant dependent which allowed them to apply a previous result by Lemieux~\cite{Lemieux17}. This required integrating the joint pdf of a scrambled $(0,m,s)$-net over closed axis-parallel boxes anchored at the origin.

Base $b$ Walsh functions have long been known to work well with digital nets in the same base. In this paper, we apply these functions to the variance decomposition framework in~\cite{Lemieux17}. We do this in Section~\ref{sec:walshdecompjointpdf} by computing an explicit formula for the base $b$ Walsh coefficients of the joint pdf of scrambled $(0,m,s)$-nets in base $b$. This yields a formula for the covariance term in terms of the Walsh coefficients of the function. In Section~\ref{sec:averagecase} we discuss decay conditions on the Walsh coefficients of functions and how these relate to the average case. In Section~\ref{sec:symcomp} we prove that by assuming a natural decay condition on the Walsh coefficients of a function, the estimator based on a scrambled $(0,m,s)$-net in base $b$ will do no worse than the Monte Carlo estimator based on the same number of points.

\subsection{The Use of Symbolic Computation}

One of the aims of this work is to introduce the tools of symbolic computation to quasi-Monte Carlo integration. Symbolic computation is a quickly developing field that is always looking for problems with which to apply the methods. We believe that there is a significant opportunity for such tools to aid in computations that are similar to the ones presented here. In this article, we show that we can reduce our problem (of determining whether or not our estimator does better on average than the purely random case) into a manageable form, from which we could draw our conclusions. In this context, we introduce and explain three different tools~(\cite{Kauers09},\cite{Koutschan09},\cite{Schneider07}) implemented as packages in the computer algebra system Mathematica to help us with our simplifications. We outline the main ideas now and the exact details are shown in Section~\ref{sec:symcomp} with computations in the corresponding Mathematica notebook, freely available for download here: \textsc{https://wongey.github.io/digital-nets-walsh/}.

An underlying principle that we use to approach this problem is ``guess and then prove."  The guessing first involves generating a finite amount of data to find a recurrence that the data satisfies. We can make an ansatz with undetermined coefficients for such a recurrence, and obtain necessary conditions on these coefficients by fitting the data. The corresponding linear system is then solved. This is effectively automated with the \textsc{Guess.m}~\cite{Kauers09} package, which takes as input the finite data with an estimate on the coefficient degree bound and order of the recurrence, and outputs a recurrence that fits the data (if~there~is~one).

Initially obtaining a recurrence in this way gave us sufficient motivation to simplify our covariance term into a double sum containing (at worst) sums and products of binomial coefficients, which has the nice property of being holonomic. In our setting, this roughly means that the binomial coefficients satisfy recurrences with polynomial coefficients. From there, the method of creative telescoping~\cite{Zeilberger91}, which has been implemented in \textsc{HolonomicFunctions.m}~\cite{Koutschan09} was then used to compute a recurrence for our double sum. In summary, guessing gave us a recurrence that is valid on the finite data, but not guaranteed to be valid everywhere. Creative telescoping provided a rigorous (and verifiable) proof that the output recurrence holds for all values in the domain of our parameters. In our situation, the latter yielded a higher order recurrence which we were able to show could be derived from the (lower order) guessed one. Then, together with the comparison of initial values, we can assert that the guessed recurrence is indeed correct. In Section \ref{sec:symcomp}, we present the final outcome of this computation, with computational details in the notebook that is published online.

Lastly, we can employ yet another tool from the symbolic computation toolbox \textsc{Sigma.m}~\cite{Schneider07} to solve the recurrence. In Section 5, the reader will see that a reasonably nice closed form for the solution of the recurrence was produced, and after a few simplifications, we were able to obtain our main result (Theorem~\ref{thm:maininequality}).
\vspace{-1cm}

\section{Preliminaries}
\label{sec:prelim}

In this paper, we denote $\N$ as the set of natural numbers including 0 and denote $\tilde P_n=\{\BFU_1,\dots,\BFU_n\}\subseteq[0,1)^s$ to be an RQMC sampling scheme designed to produce an unbiased estimator for the integral, $I(f)$, of a function $f\colon[0,1)^s\to\C$ of the form
\[\hat I_n(f)=\frac{1}{n}\sum_{i=1}^n f(\BFU_i),\]
i.e.\ we assume the $\BFU_i$ are uniformly distributed in $[0,1)^s$ with a possible dependence structure between the $\BFU_i's$. Because $\tilde P_n$ is a randomized sampling scheme, the variance of the estimator, $\var(\hat I_n(f))$, is of interest. In particular, we will seek to better understand which functions satisfy $\var(\hat I_n(f))\leq \var(\hat I_{MC,n}(f))$, where $\hat{I}_{MC,n}(f)$ is the Monte Carlo estimator of $I(f)$ based on $n$ points. 

Let $\psi(\BFx,\BFy)\colon [0,1)^{2s}\to \R_{\geq 0}$ be the joint probability distribution function (pdf) of two distinct points randomly selected from $\tilde P_n$. Following \cite{Lemieux17}, the RQMC variance decomposes as
\begin{equation}\label{eq:variance}
\var(\hat I_n(f)) = \var(\hat I_{MC,n}(f)) + \frac{n-1}{n}\cov(f(\BFU_I),f(\BFU_J))
\end{equation}
where $\BFU_I$ and $\BFU_J$ are two distinct randomly selected points from $\tilde P_n$ (we use $I$ and $J$ rather than $i$ and $j$ to emphasise that the points are randomly selected and view $f(\BFU_I)$ and $f(\BFU_J)$ as random variables), and 
\begin{equation}\label{eq:covariance}
\cov(f(\BFU_I),f(\BFU_J))=\int_{[0,1)^s}\int_{[0,1)^s} (\psi(\BFx,\BFy)-1)f(\BFx)f(\BFy)d\BFx d\BFy.
\end{equation}
Clearly we have $\var(\hat I_n(f))\leq \var(\hat I_{MC,n}(f)) \Leftrightarrow \cov(f(\BFU_I),f(\BFU_J))\leq 0$.

\subsection{Scrambled $(t,m,s)$-nets}
\label{sec:scramblednets}

We are mainly interested in a particular kind of RQMC sampling scheme known as scrambled $(0,m,s)$-nets in base $b$. They arise by scrambling a $(0,m,s)$-net which are themselves a special case of $(t,m,s)$-nets in base $b$, a class of point sets known to have good distribution properties. These nets are typically constructed by using the digital method introduced by Niederreiter \cite{Niederreiter92} and outlined in detail in \cite{DickPillichshammer10}. However, we will only need the abstract definition.

Let $b$ be a prime and let $P_n=\lbrace \BFV_1,\ldots,\BFV_n\rbrace \subseteq [0,1)^s$ be a point set with $n=b^m$ points. For $\itbf{k}\in\N^s$, we say that $P_n$ is $\itbf{k}-$equidistributed in base $b$ if each elementary $\itbf{k}-$interval 
of the form
\[\prod\limits_{j=1}^s\left[\frac{a_j}{b^{k_j}},\frac{a_j+1}{b^{k_j}}\right),\]
where $a_j\in\lbrace 0,1,\ldots,b^{k_j}-1\rbrace$, contains exactly $b^{m-k_1-\cdots-k_s}$ points. If $P_n$ is $\itbf{k}-$equi-distributed in base $b$ for all $\itbf{k}\in\N^s$ with $k_1+\cdots+k_s\leq m-t$, we call $P_n$ a $(t,m,s)$-net in base $b$ (see Figure \ref{fig:nets} for examples). The parameter $t$ measures the quality of the point set with smaller values of $t$ being better. The case $t=0$ is the best possible. However, a $(0,m,s)$-net in base $b$ only exists if $b\geq s-1$ (see \cite{Niederreiter92book}*{Corollary 4.21}).

The goal of scrambling a $(t,m,s)$-net in base $b$ is to create a randomized version $\tilde{P}_n=\lbrace \BFU_1,\ldots,\BFU_n\rbrace \subseteq [0,1)^s$ of $P_n$ in such a way that each point $\BFU_i$ uniformly distributed in the unit hypercube while preserving equidistribution properties. As in \cite{WiartLemieux19}, a \emph{scrambled $(t,m,s)$-net in base $b$} is a $(t,m,s)$-net that has been digitally scrambled in base $b$ (see Definition \ref{def:scramble}). 

\begin{figure}[ht]
\centering
\begin{tabular}{ccc}
\begin{tabular}{ccc}
\includegraphics[scale=0.25]{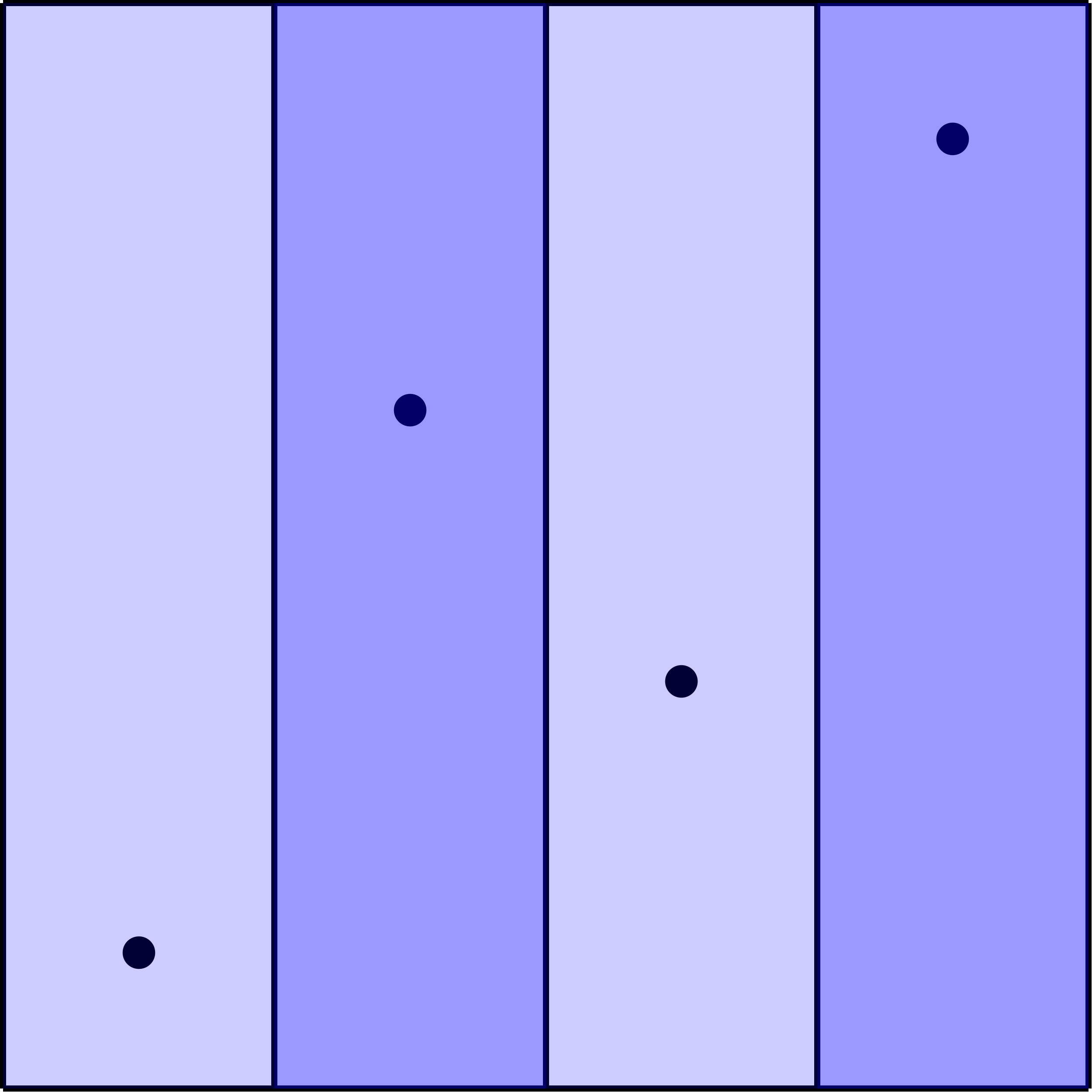}&
\includegraphics[scale=0.25]{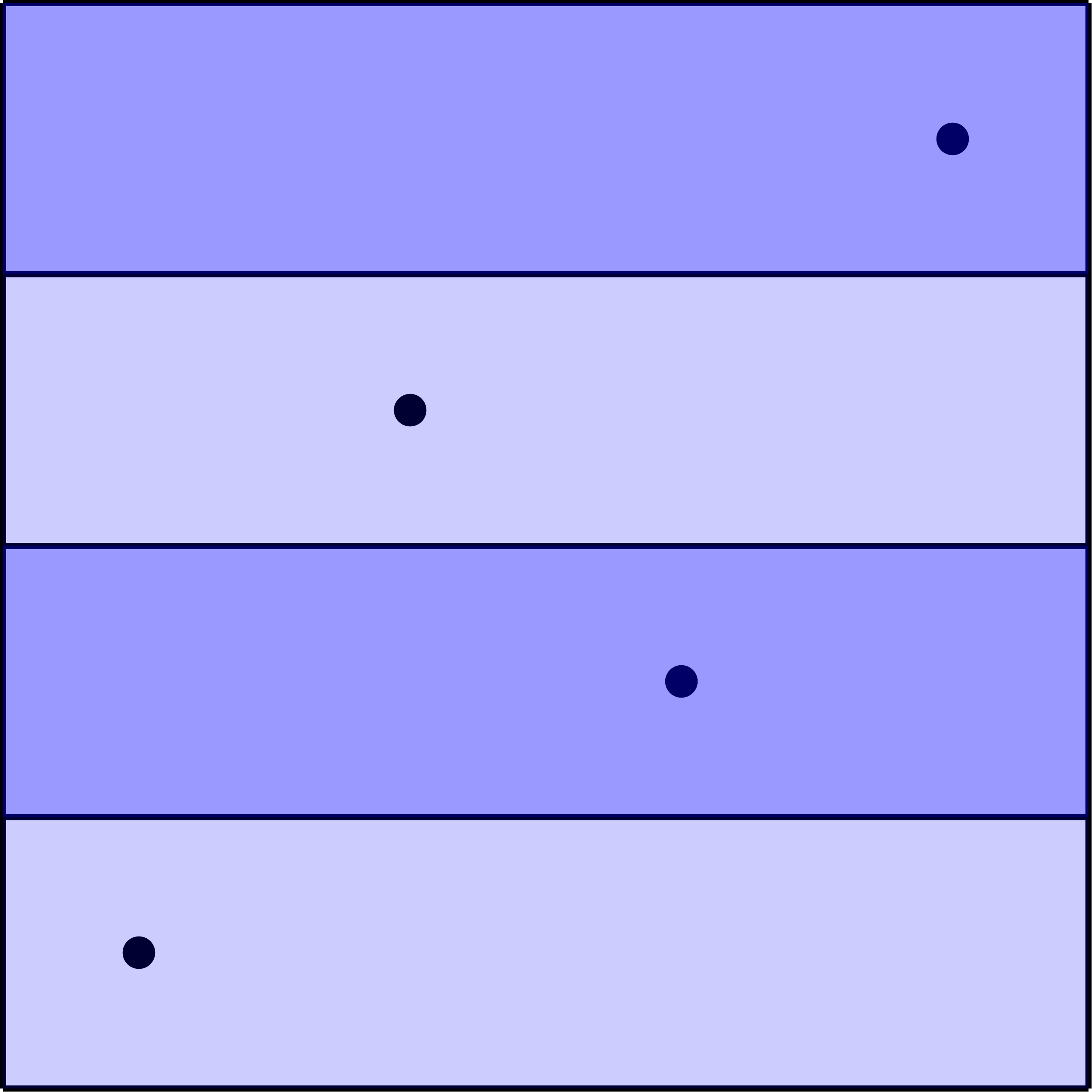}&
\includegraphics[scale=0.25]{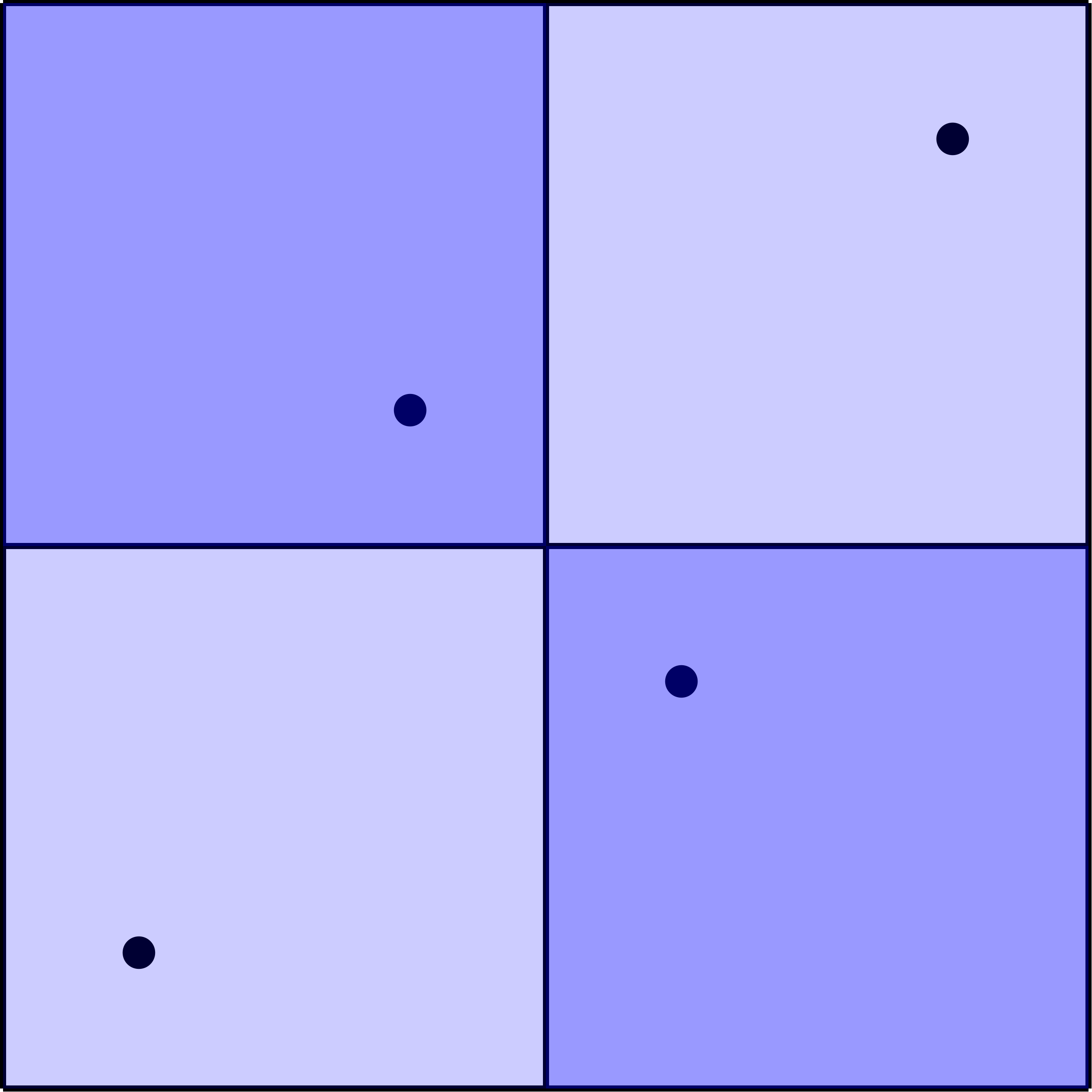}
\end{tabular}\\
\begin{tabular}{ccc}
\includegraphics[scale=0.25]{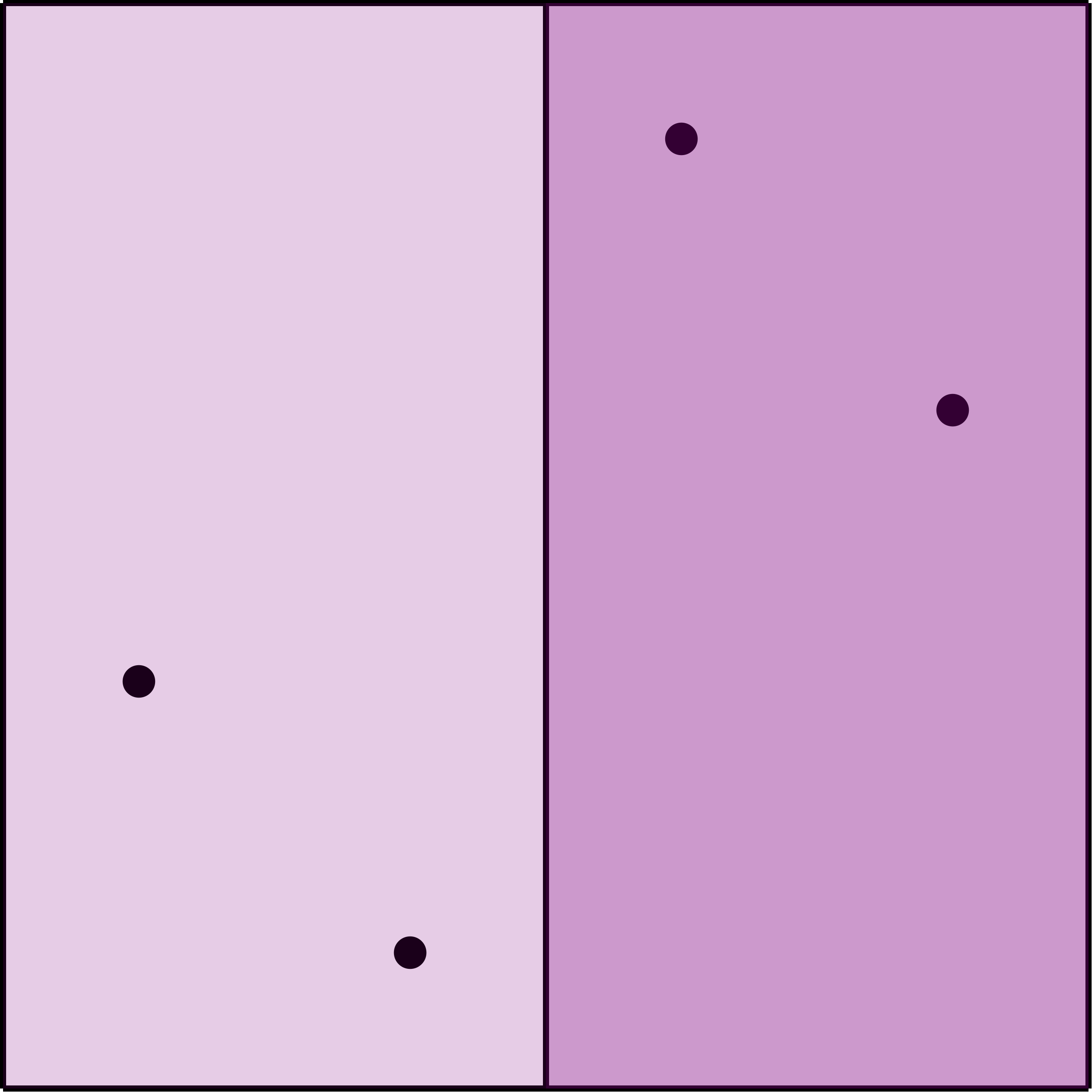}&
\includegraphics[scale=0.25]{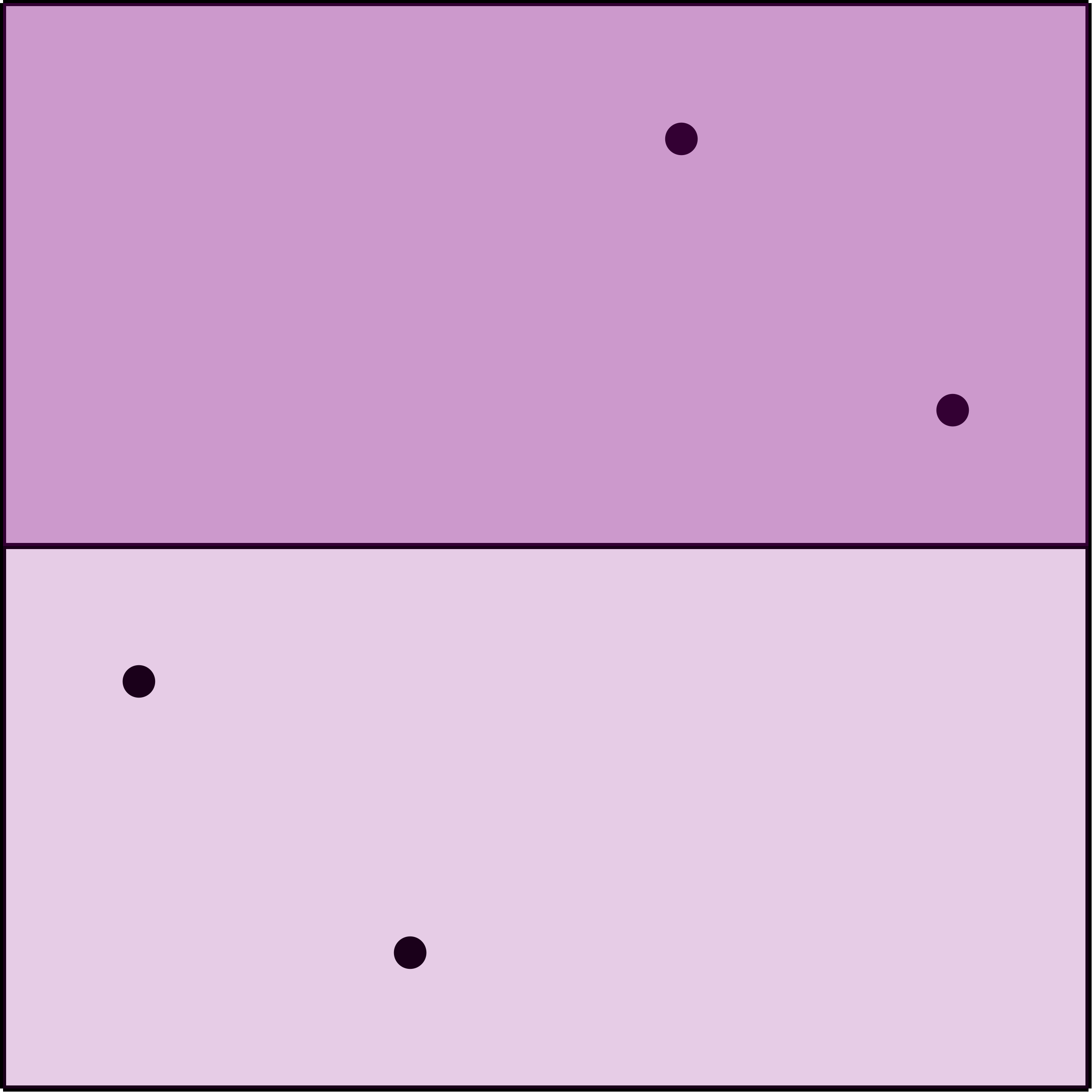}&
\includegraphics[scale=0.25]{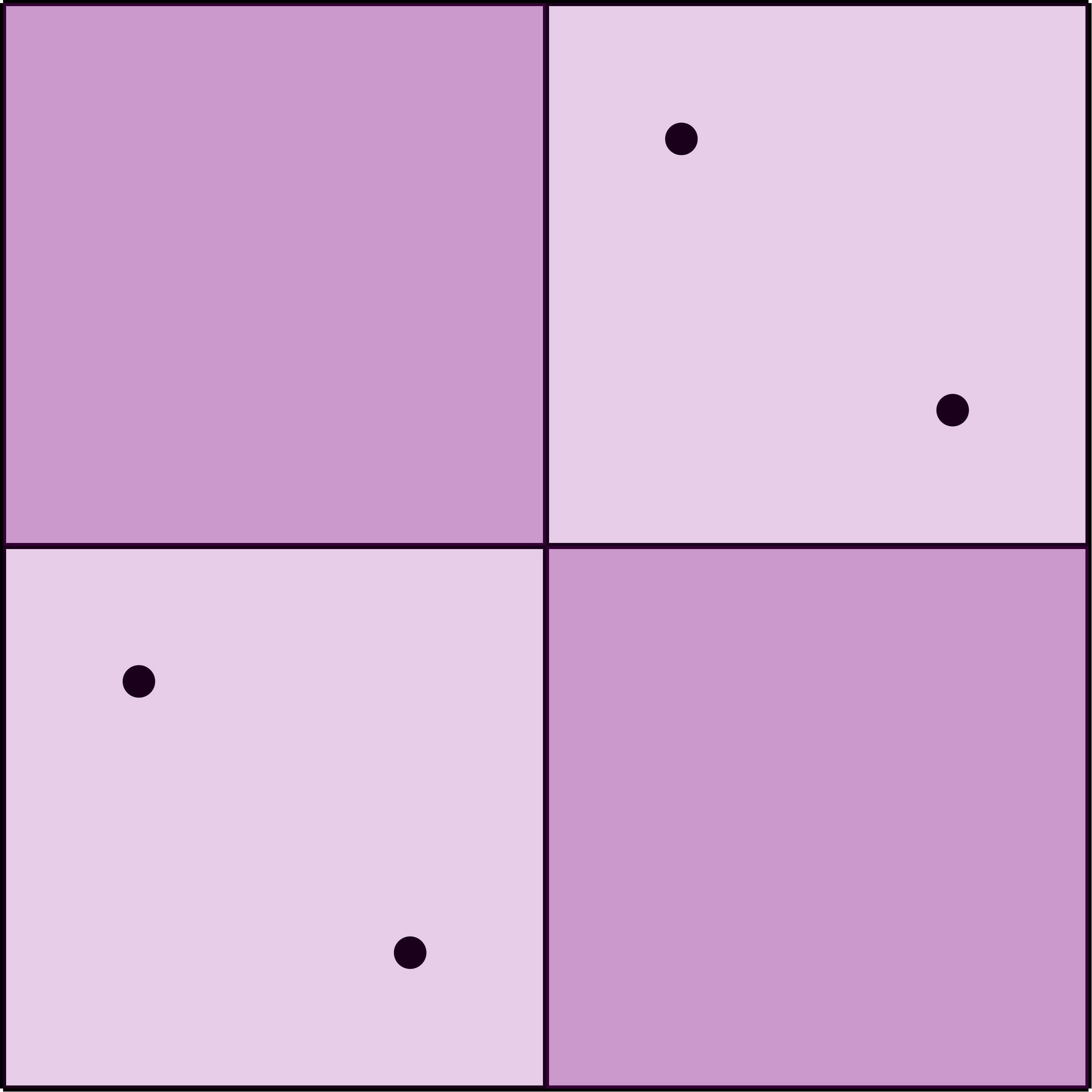}
\end{tabular}\\
\end{tabular}
\caption{The first row of figures shows the equidistribution properties of a $(0,2,2)$-net in base $2$. From left to right we see that the net is $(2,0)$-equidistributed, $(0,2)$-equidistributed, and $(1,1)$-equidistributed in base $2$ because each of the corresponding elementary intervals contains one point. The second row of figures shows the equidistribution properties of a $(1,2,2)$-net in base $2$. From left to right we see that the net is $(1,0)$-equidistributed and $(0,1)$-equidistributed in base $2$ since each of the corresponding elemenatary intervals contains two points, but the net is not $(1,1)$-equidistributed in base $2$.}
\label{fig:nets}
\end{figure}

\subsection{The Joint PDF of Scrambled $(t,m,s)$-Nets}
\label{sec:jointpdfnets}

We now set some important notation that will be useful when working with the joint pdf of scrambled $(t,m,s)$-nets.
\begin{definition}
For $x,y\in[0,1)$, we let $\gamma_b(x,y)$ denote the exact number of initial common digits shared by $x$ and $y$ in their base $b$ expansions, chosen to be finite whenever possible, i.e. the smallest  $i$ such that $\left\lfloor b^ix \right\rfloor=\left\lfloor b^iy\right\rfloor$ but $\left\lfloor b^{i+1}x\right\rfloor\neq \left\lfloor b^{i+1}y\right\rfloor$. For $\BFx,\BFy\in[0,1)^s$,  we define
\begin{align*}
\gamma_b^s(\BFx,\BFy)&= (\gamma_b(x_1,y_1),\ldots,\gamma_b(x_s,y_s))\text{ and}\\ \gamma_b(\BFx,\BFy)&=\sum_{j=1}^s \gamma_b(x_j,y_j).
\end{align*}
\end{definition}

\begin{remark}
It is possible that a number $x\in[0,1]$ has two base $b$ representations. When this happens one representation will be finite and the other will terminate in an infinite sequence of $b-1$. For example, $1=\sum_{i=1}^\infty \frac{b-1}{b^i}$. In order for $\gamma_b$ to be well-defined we must always choose the base $b$ representation of a number to be the finite one whenever possible.
\end{remark}

Using $\gamma^s_b(\BFx,\BFy)$, we define two important classes of sets consisting of pairs  of points from $[0,1)^s$, namely
\begin{align*}
C_{\BFk}^s&=\lbrace (\BFx,\BFy)\in[0,1)^{2s}:  \gamma^s_b(\BFx,\BFy)\geq \BFk\rbrace\text{ and}\\
D_{\itbf{i}}^s&=\lbrace (\BFx,\BFy)\in[0,1)^{2s}:\gamma^s_b(\BFx,\BFy)=\itbf{i}\rbrace,
\end{align*}
where the inequality is applied component-wise. When $s=1$, we write $C_{k}$ and $D_{i}$. Since
\[
C_k=\bigcup_{a=0}^{b^{k+1}-1} \Big[\frac{a}{b^k},\frac{a+1}{b^k}\Big)^2 \text{ and }D_i=C_i\setminus C_{i+1},
\]
  $\vol(C_k)=b^{-k}$ and  $\vol(D_i)=\frac{b-1}{b^{i+1}}$. Note: $C_\BFk^s=\prod_{j=1}^sC_{k_j}$ and $D_\BFi^s=\prod_{j=1}^s D_{i_j}$. This gives
\[
\vol(C^s_\BFk)=\frac{1}{b^k} \text{ and } \vol(D_\BFi^s)=\frac{(b-1)^s}{b^{s+i}}.
\]
In the above equation we have introduced our convention that, when a letter appears in a formula in both bold and non-bold, the bold letter denotes a vector and the non-bold letter denotes the sum of its coordinates. For example, $i=i_1+\cdots+i_s$ and $k=k_1+\cdots+k_s$ for $\BFi, \BFk\in\N^s.$

\begin{definition}
Let $\tilde P_n=\{\BFU_1,\dots,\BFU_n )\subseteq [0,1)^s$ be a scrambled net in base $b\geq 2$.
\begin{enumerate}
\item[(i)] For $\BFk\in\N^s$, let $M_b(\BFk;\tilde P_n)$ be the number of pairs of distinct points $(\BFU_l,\BFU_j)$ in $\tilde P_n$ such that $\gamma_b^s(\BFU_l,\BFU_j)\geq \BFk$ (alternatively such that $(\BFU_l,\BFU_j)\in C_\BFk^s$). When $\BFk\in\Z^s$ and $\BFk$ has a negative component we set \[M_b(\BFk;\tilde P_n)=M_b(\max(\BFk;\mathbf 0),\tilde P_n),\] where the maximum is taken coordinate-wise.
\item[(ii)] For $\BFi\in\N^s$, let $N_b(\BFi;\tilde P_n)$ be the number of pairs of distinct points $(\BFU_l,\BFU_j)$ in $\tilde P_n$ such that $\gamma_b^s(\BFU_l,\BFU_j)= \BFi$ (alternatively such that $(\BFU_l,\BFU_j)\in D_\BFi^s$). When $\BFi\in\Z^s$ and $\BFi$ has a negative component we set $N_b(\BFi;\tilde P_n)=0$.
\end{enumerate}
\end{definition}

Note that 
\begin{equation}\label{eq:countingnumberequality}
M_b(\BFk;\tilde P_n)=\sum_{\BFk\leq \BFi\in\Z^s} N_b(\BFi;\tilde P_n)
\end{equation}
for all $\BFk\in\Z^s$ and  $M_n(\BFk; \tilde P_n)=b^m(b^{m-k}-1)$  when $\tilde P_n$ is a $(0,m,s)$-net in base~$b$.

Using this notation we are now able to concisely state our notion of scrambling.

\begin{definition}\label{def:scramble}
A sampling scheme $\tilde P_n=\{\BFU_1,\dots,\BFU_n\}\subseteq[0,1)^s$ a \emph{base $b$-digital scramble} of $P_n=\{\BFV_1,\dots,\BFV_n\}\subseteq[0,1)^s$ if it satisfies the following property:
$\text{if } (\BFV_l,\BFV_j)\in D_\itbf{i}^s \text{, then } (\BFU_l,\BFU_j)\text{ is uniformly distributed in }D_{\itbf{i}}^s.$
A \emph{scrambled $(t,m,s)$-net in base $b$} is a $(t,m,s)$-net that has been digitally scrambled in base $b$.
\end{definition}
One way of realizing such a scramble is Owen's scrambling algorithm \cite{Owen95} (a detailed explanation is given in  \cite[Section 13.1]{DickPillichshammer10}). 

\begin{theorem} (Wiart et al. \cite{WiartLemieux19}) \label{thm:jointpdf}
Let $\tilde P_n$ be a scrambled $(t,m,s)$-net in base $b$ whose one-dimensional projections are $(0,m,1)$-nets.  Then the joint pdf $\psi(\BFx,\BFy)$ of two distinct points randomly chosen from $\tilde P_n$ is given by
\[
\psi(\BFx,\BFy)=\begin{cases}
\frac{N_b(\BFi;\tilde P_n)}{n(n-1)}\frac{b^{s+i}}{(b-1)^s}	&\text{if } i<\infty,\\
0 &\text{if } i=\infty,
\end{cases}
\]
where $\BFi=\gamma_b^s(\BFx,\BFy)$ and $i=\gamma_b(\BFx,\BFy)$.
\end{theorem}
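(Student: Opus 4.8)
The plan is to identify the joint pdf of a uniformly chosen ordered pair of distinct points $(\BFU_L,\BFU_J)$ from $\tilde P_n$ by conditioning on the random index pair $(L,J)$ and applying the scrambling property of Definition~\ref{def:scramble} to each fixed pair of indices; everything then reduces to the volume computation for $D_\BFi^s$ recorded before the theorem.

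First I would record the one consequence of the hypothesis that the one-dimensional projections of $\tilde P_n$ are $(0,m,1)$-nets: in each coordinate $c$ the $n=b^m$ values $V_{1,c},\dots,V_{n,c}$ occupy distinct elementary intervals $[a/b^m,(a+1)/b^m)$, so they are pairwise distinct and in fact $\gamma_b(V_{l,c},V_{j,c})\le m-1$ whenever $l\ne j$. Hence $\BFi_{lj}:=\gamma_b^s(\BFV_l,\BFV_j)\in\N^s$ for every pair $l\ne j$, so $(\BFV_l,\BFV_j)\in D_{\BFi_{lj}}^s$ and the scrambling hypothesis genuinely has content for that pair. This is the only place the projection hypothesis enters; in particular $t$ plays no role.

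Next, for a fixed pair $l\ne j$ the scrambling property says $(\BFU_l,\BFU_j)$ is uniform on $D_{\BFi_{lj}}^s$, which by the volume formula recorded before the theorem has $\vol(D_{\BFi_{lj}}^s)=(b-1)^s/b^{s+i_{lj}}$, so $(\BFU_l,\BFU_j)$ has density $\tfrac{b^{s+i_{lj}}}{(b-1)^s}\mathbf{1}_{D_{\BFi_{lj}}^s}$. Averaging over the $n(n-1)$ equally likely choices of $(L,J)$, for every bounded test function $g$ one gets $\mathbb{E}[g(\BFU_L,\BFU_J)]=\frac1{n(n-1)}\sum_{l\ne j}\frac{b^{s+i_{lj}}}{(b-1)^s}\int_{D_{\BFi_{lj}}^s}g$, which exhibits
\[
\psi(\BFx,\BFy)=\frac1{n(n-1)}\sum_{l\ne j}\frac{b^{s+i_{lj}}}{(b-1)^s}\,\mathbf{1}_{D_{\BFi_{lj}}^s}(\BFx,\BFy)
\]
as a bona fide density. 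Since the sets $D_\BFi^s$, $\BFi\in\N^s$, are the pairwise disjoint level sets of $\gamma_b^s$, at a point $(\BFx,\BFy)$ with $\BFi:=\gamma_b^s(\BFx,\BFy)$ finite the surviving terms are exactly those with $\BFi_{lj}=\BFi$, i.e. with $(\BFV_l,\BFV_j)\in D_\BFi^s$; there are $N_b(\BFi;\tilde P_n)$ of these (as $\gamma_b^s(\BFU_l,\BFU_j)=\BFi_{lj}$ almost surely, counting $\BFU$-pairs in $D_\BFi^s$ gives the same number as counting $\BFV$-pairs), so the sum collapses to $\frac{N_b(\BFi;\tilde P_n)}{n(n-1)}\frac{b^{s+i}}{(b-1)^s}$. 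For $(\BFx,\BFy)$ with $i=\infty$, i.e. with $x_c=y_c$ in some coordinate, the set of such points has Lebesgue measure zero and meets no $D_{\BFi_{lj}}^s$, so $\psi$ may be taken to be $0$ there, which is consistent with $\psi$ being a density.

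I expect no genuine analytic difficulty, only careful bookkeeping: (i) verifying that the projection hypothesis is precisely what forces every $\BFi_{lj}$ to be finite, so that Definition~\ref{def:scramble} applies to each pair and the mixture is over sets of positive volume; and (ii) noting that $N_b(\BFi;\tilde P_n)$, although defined through the random points $\BFU_l$, equals almost surely the deterministic number of pairs of the underlying net lying in $D_\BFi^s$. The remaining ingredients are just the disjointness of the level sets of $\gamma_b^s$ and the elementary identity $\vol(D_\BFi^s)=(b-1)^s/b^{s+i}$.
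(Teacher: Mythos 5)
The paper does not prove Theorem~\ref{thm:jointpdf} at all: it is imported verbatim from Wiart et al.\ \cite{WiartLemieux19}, so there is no in-paper argument to compare yours against. On its own merits, your proof is correct and is the natural derivation (and, as far as I can tell, the same route as the cited source): condition on the uniformly chosen ordered index pair $(L,J)$, use Definition~\ref{def:scramble} to see that $(\BFU_l,\BFU_j)$ is uniform on $D_{\BFi_{lj}}^s$ with density $b^{s+i_{lj}}(b-1)^{-s}\mathbf{1}_{D_{\BFi_{lj}}^s}$, average the mixture, and collapse it at a point of $D_\BFi^s$ using the disjointness of the level sets of $\gamma_b^s$. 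Your two bookkeeping points are exactly the right ones to flag: the $(0,m,1)$-projection hypothesis (applied with $k=m$, so each length-$b^{-m}$ interval holds one point, forcing $\gamma_b(V_{l,c},V_{j,c})\leq m-1$) is precisely what makes every $\BFi_{lj}$ finite so the scrambling definition has content for every pair, and $N_b(\BFi;\tilde P_n)$ is almost surely the deterministic count of $\BFV$-pairs in $D_\BFi^s$ because scrambling preserves membership in each $D_\BFi^s$. The handling of the measure-zero set $\{i=\infty\}$ is also fine. No gaps.
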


When $\tilde{P}_n$ is a scrambled $(0,m,s)$-net in base $b$, the number of pairs of distinct points in $\tilde{P}_n$ that share $\itbf{i}$ initial common digits in their base $b$ expansions
can be computed using the formula
\[
b^m\sum\limits_{k=0}^s(-1)^k\binom{s}{k}\max(b^{m-i-k},1)
\]
for all $\itbf{i}\in\mathbb{N}^s$ with $i$ being the sum of the coordinates of $\itbf{i}$ \cite{WiartLemieux19}. Since the $t$ parameter tells us nothing about the distribution of a $(t,m,s)$-net $P_n$ on elementary $\itbf{k}-$intervals where $k_1+\cdots+k_n>m-t$, we cannot say what the value of the joint pdf will be for a base $b$-digital scramble of $P_n$ without knowing either the points or how the point set was constructed. For this reason, we are unable to obtain a general result for scrambled $(t,m,s)$-nets when $t\neq 0$.

\subsection{Walsh Functions}
\label{sec:walsh}

One aspect of our work in the present paper that differs from the recent work of Wiart et al. \cite{WiartLemieux19} is that we take the framework that has already been fixed and investigate the integration of $L^2([0,1)^s)$ functions with Walsh decompositions over the scrambled $(0,m,s)$-nets. Elementary Walsh functions are piecewise constant and form an orthonormal basis for $L^2([0,1)^s)$. Roughly speaking, they are discrete analogs of sines and cosines. We refer the reader to other sources (\cite{DickPillichshammer10} \cite{Owen97},\cite{Walsh22}) for a complete description of Walsh functions and their properties.  We will however, elucidate the properties that we use.

More precisely, for $b\geq 2$, denote $\omega_b$ to be the primitive $b$-th root of unity $e^{2\pi i/b}$. Let $l\in\mathbb{N}$ with the (finite) $b$-adic expansion
$l=\lambda_0+\lambda_1b+\lambda_2b^2+\cdots.$ Then the $l$-th $b$-adic elementary Walsh function $_b\mbox{wal}_l\hspace{-0.5mm}:\mathbb{R}\rightarrow\mathbb{C}$, periodic with period one, is defined
\[_b\mbox{wal}_l(x):=\omega_b^{\lambda_0\xi_1+\lambda_1\xi_2+\lambda_2\xi_3+\cdots}\] for $x\in[0,1)$ with $b$-adic expansion $x=\xi_1b^{-1}+\xi_2b^{-2}+\xi_3b^{-3}+\cdots.$ We call ${\lbrace _b\mbox{wal}_l:l\in\mathbb{N}\rbrace}$ the $b$-adic Walsh function system. For $s\geq 2$ with $\itbf{x}\in[0,1)^s$ and $\itbf{l} \in\mathbb{N}^s$, we have that
\[_b\mbox{wal}_{\itbf{l}}(\itbf{x}):=\prod\limits_{j=1}^s ~_b\mbox{wal}_{l_j}(x_j).\]

\begin{figure}[ht]
\centering
\includegraphics[scale=0.26]{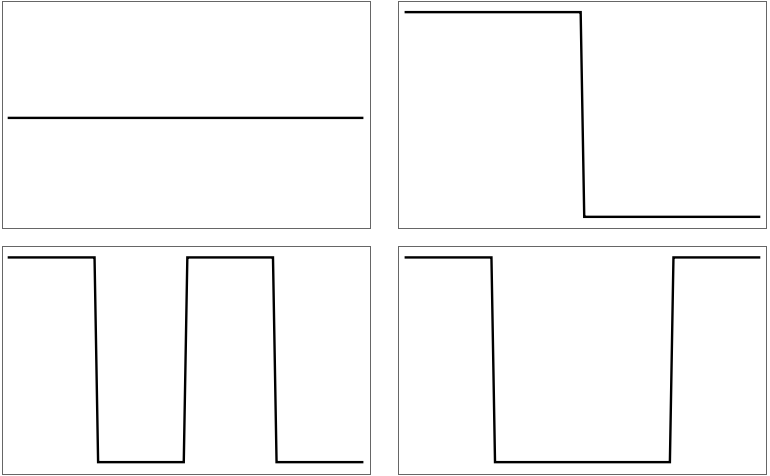}
\includegraphics[scale=0.075]{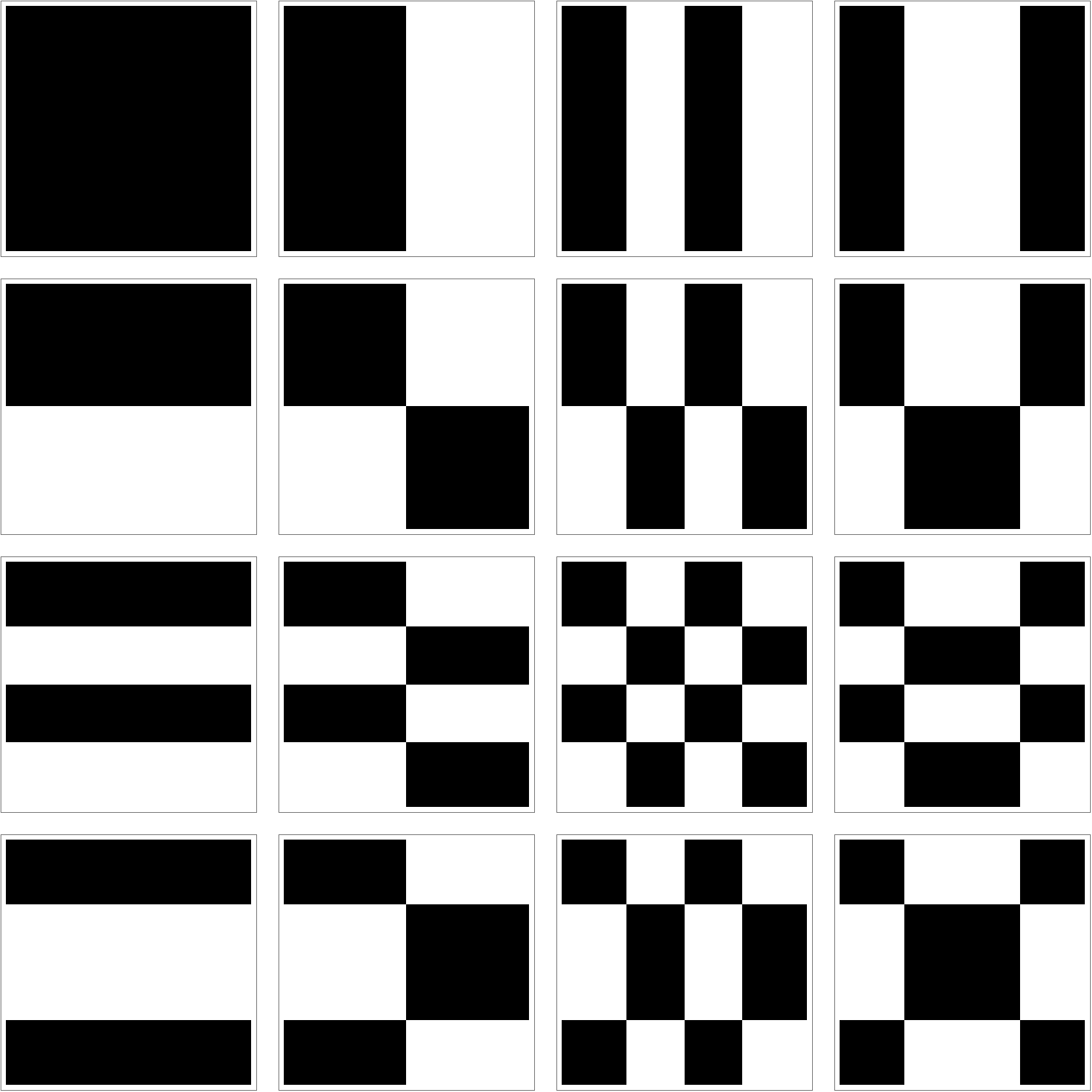}
\caption{A pictorial example of one-variable (left) and two-variable (right) Walsh functions in base 2 with $l=0,\ldots,3$ in the one-variable case, $l_1=0,\ldots,3$ and $l_2=0,\ldots,3$ in the two-variables case. Observe that the first row and first column of the two-variable case represents the two dimension extension of the one-variable case. In general, Walsh functions are complex-valued, but this is something that we don't have to worry about in base 2.}
\label{fig:walsh}
\end{figure}

Since the parameter $b$ is symbolically consistent throughout our analysis, we will not include the (pre)-subscript from this point on. Figure~\ref{fig:walsh} gives a pictorial representation of the first few elements of this system in both one and two variables, which illustrate their general behavior on elementary intervals. We now recall some relevant properties of elementary Walsh functions (a more complete list can be found in the sources mentioned above):

\begin{enumerate}
\item{Multiplying two 1-variable elementary Walsh functions is taking the Walsh function on the sum of its digits modulo $b$ (difference, if the second Walsh function is its conjugate). So, for all $k,l\in\mathbb{N}$ and all $x,y\in[0,1)$, we have
$$\mbox{wal}_k(x)\mbox{wal}_l(x)=\mbox{wal}_{k\oplus_b l}(x),$$
$$\mbox{wal}_k(x)\overline{\mbox{wal}_l(x)}=\mbox{wal}_{k\ominus_b l}(x).$$
As the base $b$ is symbolically consistent throughout the analysis, we forgo the subscript to make the arguments look nicer from this point forward.
}
\item{For all $\BFk,\itbf{l}\in\mathbb{N}^s$, the following orthogonality property holds:
$$\int\limits_{[0,1]^s}\mbox{wal}_{\BFk}(\itbf{x})\overline{\mbox{wal}_{\itbf{l}}(\itbf{x})} d\itbf{x}=\begin{cases}1&\mbox{if }\BFk=\itbf{l},\\
0 & \mbox{if } \BFk\neq\itbf{l} \end{cases}$$
}
\item{For any positive integer $s$, the system $\lbrace \mbox{wal}_{\itbf{l}}(\itbf{x}): \itbf{l}\in\mathbb{N}^s \rbrace$ is complete and orthonormal in $L^2([0,1)^s)$.
}
\end{enumerate}

Since the Walsh system is an orthonormal basis for $L^2([0,1)^s)$, each $f\in L^2([0,1)^s)$ has a unique Walsh series decomposition
\[
f(\textbf{x})\sim \sum\limits_{\itbf{l}\in\N^s}\hat{f}(\itbf{l})\wal_\itbf{l}(\textbf{x})
\]
where $\sim$ denotes the $L^2$-equivalence and $\hat{f}(\itbf{l})$ is the Walsh coefficient of $f$ at $\itbf{l}$. As a final piece of notation, for each $\itbf{k}\in\N^s$, we let
\[
L_{\itbf{k}}:= \{(l_1,\ldots,l_s)\in\N^s:\left\lfloor b^{k_j-1} \right\rfloor\leq l_j < b^{k_j}, \text{ for } j=1,\dots,s \}
\]
and set
\[
\sigma_{\itbf{k}}^2(f)=\sum\limits_{\itbf{l}\in L_{\itbf{k}}}|\hat{f}(\itbf{l})|^2.
\]
As an example of the usefulness of Walsh coefficients, we conclude this section with a variance result, originally due to Owen \cite{Owen97} using Haar wavelets (see \cite{DickPillichshammer10} for the Walsh version). More precisely, given an $f\in L^2([0,1)^s)$, the variance of the estimator based on a scrambled $(0,m,s)$-net can be written as
\[
\var(\hat{I}_n(f))=\sum\limits_{\itbf{0}\neq \itbf{k}\in\N^s} G_{\itbf{k}}\cdot\sigma_{\itbf{k}}^2(f)
\]
where $G_{\itbf{k}}$ are the gain coefficients. When $G_{\itbf{k}}<1$, the scrambled $(0,m,s)$-net does better than MC for $\wal_{\itbf{l}}(\itbf{x})$ where $\itbf{l}\in L_{\itbf{k}}$. When $G_{\itbf{k}}>1$, it does worse. There are two key facts about these gain coefficients:
\begin{itemize}
\item[(i)] $G_{\itbf{k}}=0$ for $k_1+\cdots+k_s\leq m$,
\item[(ii)] $G_{\itbf{k}}\leq \left(\frac{b}{b-1}\right)^{\min(s-1,m)}\leq e$ for $k_1+\cdots+k_s> m$ (see \cite[Theorem 1]{Owen97}). 
\end{itemize}

Using these two properties, one can deduce that the variance of the estimator based on scrambled $(0,m,s)$-nets converges to 0 faster than MC based on the same number of points. In our work, the Walsh coefficients of the joint pdf take the place of the gain coefficients. When the coefficient $\hat{\psi}(\itbf{l})$ is negative, the RQMC sampling scheme will do better than MC on $\wal_{\itbf{l}}(\itbf{x})$ and otherwise it will do worse.

\section{Walsh Decomposition of the Joint PDF}
\label{sec:walshdecompjointpdf}

In this section we write down a formula for the base $b$ Walsh coefficients of the joint pdf of a scrambled $(0,m,s)$-net in base $b$. Our calculations will use the fact that the joint pdf is constant on the $D_\BFi^s$ regions. Thus, we will first work towards understanding the Walsh decomposition of $1_{D^s_\BFi}(\BFx,\BFy)$.  Those indicator functions are the product of the two-dimensional functions \[1_{D_{i_j}}(x_j,y_j)=1_{C_{i_j}}(x_j,y_j)-1_{C_{i_j+1}}(x_j,y_j).\]  This brings us to our first lemma.

\begin{lemma}
The base $b$ Walsh decomposition of the indicator function of $C_i$ is 
\[
1_{C_i}(x,y)=\sum_{l=0}^{b^i-1}b^{-i}\wal_l(x)\overline{\wal_l(y)}.
\]
\end{lemma}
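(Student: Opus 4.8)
The claim is that $1_{C_i}(x,y) = \sum_{l=0}^{b^i-1} b^{-i} \wal_l(x)\overline{\wal_l(y)}$, where $C_i = \bigcup_{a=0}^{b^i-1} [a/b^i, (a+1)/b^i)^2$. My plan is to compute the right-hand side directly using the digit structure of Walsh functions and show it equals the indicator. The key observation is that for $l$ in the range $0 \le l < b^i$, the $b$-adic expansion $l = \lambda_0 + \lambda_1 b + \cdots + \lambda_{i-1} b^{i-1}$ has at most $i$ digits, so $\wal_l(x)$ depends only on the first $i$ base-$b$ digits $\xi_1,\ldots,\xi_i$ of $x$. Concretely, $\wal_l(x) = \omega_b^{\lambda_0 \xi_1 + \lambda_1 \xi_2 + \cdots + \lambda_{i-1}\xi_i}$, which is constant on each interval $[a/b^i,(a+1)/b^i)$.

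First I would fix $(x,y) \in [0,1)^{2s}$... rather, since this is the one-dimensional statement, fix $x,y \in [0,1)$ with base-$b$ digit vectors $(\xi_1,\xi_2,\ldots)$ and $(\eta_1,\eta_2,\ldots)$ (choosing finite expansions per the Remark). Then $\wal_l(x)\overline{\wal_l(y)} = \omega_b^{\sum_{r=0}^{i-1}\lambda_r(\xi_{r+1}-\eta_{r+1})}$. Summing over $l \in \{0,\ldots,b^i-1\}$ is the same as summing each digit $\lambda_r$ independently over $\{0,\ldots,b-1\}$, so the sum factors as a product:
\[
\sum_{l=0}^{b^i-1}\wal_l(x)\overline{\wal_l(y)} = \prod_{r=0}^{i-1}\left(\sum_{\lambda_r=0}^{b-1}\omega_b^{\lambda_r(\xi_{r+1}-\eta_{r+1})}\right).
\]
Each inner factor is a geometric sum of $b$-th roots of unity: it equals $b$ if $\xi_{r+1} \equiv \eta_{r+1} \pmod b$ (i.e. $\xi_{r+1}=\eta_{r+1}$, since digits lie in $\{0,\ldots,b-1\}$) and $0$ otherwise. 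Hence the product is $b^i$ if $\xi_{r+1}=\eta_{r+1}$ for all $r=0,\ldots,i-1$, i.e. if $x$ and $y$ share their first $i$ digits, and $0$ otherwise. Multiplying by $b^{-i}$ gives exactly $1$ when $\lfloor b^i x\rfloor = \lfloor b^i y\rfloor$ and $0$ otherwise, which is precisely $1_{C_i}(x,y)$.

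The main subtlety — not a deep obstacle, but the point requiring care — is the handling of the non-unique $b$-adic expansions flagged in the Remark: I must confirm that "sharing the first $i$ digits" under the finite-expansion convention coincides with membership in $C_i$ as defined by the floor functions, including the boundary points $a/b^i$. A point of the form $a/b^i$ has a finite expansion terminating before position $i+1$, and the floor condition $\lfloor b^i x\rfloor = \lfloor b^i y\rfloor$ correctly places it in the half-open interval $[a/b^i,(a+1)/b^i)$; since the Walsh functions were defined using the finite expansion, the digit computation matches. I would also note that the right-hand sum is manifestly in $L^2$ (a finite linear combination of Walsh functions) and the identity is pointwise, so there is no $L^2$-versus-pointwise issue to worry about. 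This lemma then feeds into the decomposition of $1_{D_i} = 1_{C_i} - 1_{C_{i+1}}$ and, by taking tensor products over the $s$ coordinates, into the decomposition of $1_{D_\BFi^s}$.
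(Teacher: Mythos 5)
Your proof is correct, but it proceeds differently from the paper's. You compute the sum directly at the level of digits: writing $\wal_l(x)\overline{\wal_l(y)}=\omega_b^{\sum_{r=0}^{i-1}\lambda_r(\xi_{r+1}-\eta_{r+1})}$, factoring the sum over $l\in\{0,\dots,b^i-1\}$ into a product of $i$ geometric sums of $b$-th roots of unity, and observing that each factor is $b$ or $0$ according to whether the corresponding digits of $x$ and $y$ agree. The paper instead uses a duality trick: it rewrites $\wal_l(x)\overline{\wal_l(y)}=\wal_{\lfloor b^ix\rfloor\ominus\lfloor b^iy\rfloor}(\{b^{-i}l\})$, recognizes the sum over $l$ as $b^i$ times the integral of that single Walsh function over $[0,1)$ (since $\wal_k$ with $k<b^i$ is constant on intervals of length $b^{-i}$), and then invokes orthogonality of the Walsh system to conclude the integral is $1$ or $0$ according to whether $\lfloor b^ix\rfloor\ominus\lfloor b^iy\rfloor=0$. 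The two arguments prove the same root-of-unity cancellation; yours is more self-contained and makes the mechanism explicit, while the paper's is shorter on the page because it outsources the cancellation to the already-stated orthogonality property. Your attention to the finite-expansion convention at points of the form $a/b^i$ is a welcome addition that the paper's proof passes over silently.
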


\begin{proof}
Fix $x,y\in[0,1)$ and let $\lbrace b^{-i}l\rbrace$ denote the fractional part of $b^{-i}l$. Then
\begin{align*}
\sum_{l=0}^{b^i-1}\wal_l(x)\overline{\wal_l(y)}
	&=\sum_{l=0}^{b^i-1}\wal_{\lfloor b^ix\rfloor\ominus\lfloor b^iy\rfloor}(\lbrace b^{-i}l\rbrace)\\
	&=b^i\int_{[0,1)}\wal_{\lfloor b^ix\rfloor\ominus\lfloor b^iy\rfloor} dt\\
	&=\begin{cases}
	b^i	&\text{if } \lfloor b^ix\rfloor\ominus\lfloor b^iy\rfloor=0,\\
	0		&\text{otherwise}.
	\end{cases}
\end{align*}
The statement follows because $\lfloor b^ix\rfloor\ominus\lfloor b^iy\rfloor=0$ exactly when $(x,y)\in C_i$.
\end{proof}

The Walsh coefficients of $1_{D_i}(\BFx,\BFy)$ are found by multiplying together the functions $1_{C_{i_j}}(x_j,y_j)-1_{C_{i_j+1}}(x_j,y_j)$. In order to keep track of the terms in the product we introduce the following notation.

\begin{definition}
For $\BFi,\BFl\in\N^s$, denote $d(\BFi,\BFl)$ to be the number of $j\in\lbrace 1,\ldots, s \rbrace$ for which $l_j<b^{i_j}$.
\end{definition}

\begin{lemma}
The base $b$ Walsh decomposition of the indicator function of $D_{\BFi}^s$ is 
\[
1_{D_\BFi^s}(\BFx,\BFy)=\sum_{\BFl\in\N^s}\hat 1_{D_\BFi^s}(\BFl)\wal_\BFl(\BFx)\overline{\wal_\BFl(\BFy)},
\]
where
\[
\hat 1_{D_\BFi^s}(\BFl)=\begin{cases}
\frac{(-1)^s(1-b)^{d(\BFi,\BFl)}}{b^{s+i}}	&\text{if } l_j<b^{i_j+1} \text{ for } j=1,\dots,s,\\
0&\text{otherwise}.
\end{cases}
\]
\end{lemma}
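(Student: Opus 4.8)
The plan is to expand the product $1_{D_\BFi^s}(\BFx,\BFy)=\prod_{j=1}^s\bigl(1_{C_{i_j}}(x_j,y_j)-1_{C_{i_j+1}}(x_j,y_j)\bigr)$ using the previous lemma, which gives the one-dimensional Walsh decomposition of each $1_{C_i}$. Substituting the lemma into each factor, the factor indexed by $j$ becomes
\[
\sum_{l_j=0}^{b^{i_j+1}-1}c_{i_j}(l_j)\,\wal_{l_j}(x_j)\overline{\wal_{l_j}(y_j)},\qquad
c_{i_j}(l_j)=\begin{cases}b^{-i_j}-b^{-i_j-1}&\text{if }l_j<b^{i_j},\\ -b^{-i_j-1}&\text{if }b^{i_j}\le l_j<b^{i_j+1}.\end{cases}
\]
Here I have used that the $1_{C_{i_j}}$ sum only runs up to $b^{i_j}-1$ while the $1_{C_{i_j+1}}$ sum runs up to $b^{i_j+1}-1$, so the combined index range is $0\le l_j<b^{i_j+1}$, and on the overlap the coefficients subtract while on the top block only the second term contributes. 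Multiplying the $s$ factors together and using property~(1) of Walsh functions to collect $\prod_j\wal_{l_j}(x_j)\overline{\wal_{l_j}(y_j)}=\wal_\BFl(\BFx)\overline{\wal_\BFl(\BFy)}$, I read off that $\hat 1_{D_\BFi^s}(\BFl)=\prod_{j=1}^s c_{i_j}(l_j)$ when every $l_j<b^{i_j+1}$ and is $0$ otherwise, matching the claimed support condition.

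The remaining task is purely arithmetic: show $\prod_{j=1}^s c_{i_j}(l_j)=\dfrac{(-1)^s(1-b)^{d(\BFi,\BFl)}}{b^{s+i}}$ on the support. Write $c_{i_j}(l_j)=-b^{-i_j-1}\cdot e_j$ where $e_j=b-1$ if $l_j<b^{i_j}$ (i.e.\ $j$ is one of the $d(\BFi,\BFl)$ ``small'' indices) and $e_j=1$ otherwise. Then
\[
\prod_{j=1}^s c_{i_j}(l_j)=\prod_{j=1}^s\bigl(-b^{-i_j-1}\bigr)\cdot\prod_{j=1}^s e_j
=(-1)^s b^{-\sum_j(i_j+1)}\,(b-1)^{d(\BFi,\BFl)}
=\frac{(-1)^s(b-1)^{d(\BFi,\BFl)}}{b^{s+i}},
\]
using the paper's convention $i=i_1+\cdots+i_s$. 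Finally $(b-1)^{d(\BFi,\BFl)}=(-(1-b))^{d(\BFi,\BFl)}$, so the sign $(-1)^{d(\BFi,\BFl)}$ can be absorbed to rewrite this as $\dfrac{(-1)^s(1-b)^{d(\BFi,\BFl)}}{b^{s+i}}$, which is exactly the asserted formula. (Strictly the two forms differ by $(-1)^{d(\BFi,\BFl)}$, so I would double-check the intended sign convention; in any case it is a cosmetic rewrite of $(b-1)^{d}$ versus $(1-b)^{d}$.)

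I do not anticipate a genuine obstacle here — the lemma is a routine consequence of the previous one together with the multiplicativity of Walsh functions. The only points requiring care are bookkeeping ones: (a) getting the index ranges right when subtracting $1_{C_{i_j+1}}$ from $1_{C_{i_j}}$, so that the support condition comes out as $l_j<b^{i_j+1}$ rather than $l_j<b^{i_j}$; (b) correctly identifying that the exponent of $(b-1)$ counts precisely the indices $j$ with $l_j<b^{i_j}$, which is the definition of $d(\BFi,\BFl)$; and (c) reconciling the $(b-1)$ versus $(1-b)$ sign in the final display. A clean way to organize the write-up is to first record the single-factor identity $1_{C_{i}}(x,y)-1_{C_{i+1}}(x,y)=\sum_{l=0}^{b^{i+1}-1}c_i(l)\wal_l(x)\overline{\wal_l(y)}$ with the piecewise $c_i(l)$ above, then take the $s$-fold product and simplify.
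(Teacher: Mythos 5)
Your approach is exactly the paper's: compute the one-dimensional coefficients of $1_{D_i}=1_{C_i}-1_{C_{i+1}}$ from the previous lemma, note the support is $l<b^{i+1}$ with two coefficient regimes, and take the $s$-fold product using multiplicativity of Walsh functions. The structure and all the bookkeeping points (a), (b) are right.

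However, the sign discrepancy you flag at the end is not a cosmetic convention issue --- it is a slip in your own factorization, and the paper's form is the correct one. For a ``small'' index ($l_j<b^{i_j}$) you have $c_{i_j}(l_j)=b^{-i_j}-b^{-i_j-1}=\tfrac{b-1}{b^{i_j+1}}>0$, so writing $c_{i_j}(l_j)=-b^{-i_j-1}e_j$ forces $e_j=-(b-1)=1-b$, not $e_j=b-1$ as you set. With the corrected $e_j$ the product is
\[
\prod_{j=1}^s c_{i_j}(l_j)=(-1)^s b^{-(s+i)}(1-b)^{d(\BFi,\BFl)},
\]
which is precisely the stated formula. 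Your version $\tfrac{(-1)^s(b-1)^{d}}{b^{s+i}}$ differs from the truth by $(-1)^{d}$ and is genuinely wrong for odd $d$: e.g.\ $s=1$, $i=0$, $l=0$ gives $\hat 1_{D_0}(0)=1-\tfrac1b=\tfrac{b-1}{b}$, matching $\tfrac{(-1)(1-b)}{b}$ but not $\tfrac{(-1)(b-1)}{b}$. So do not ``absorb'' the sign --- fix $e_j$ and the rest of your argument goes through verbatim.
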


\begin{proof}
Recall that $D_i=C_i\setminus C_{i+1}$ and $D_\BFi^s=\prod_{j=1}^s D_{i_j}$. This means that
\begin{align*}
1_{D_i}(x,y)
	&=1_{C_i}(x,y)-1_{C_{i+1}}(x,y)\\
	&=\sum_{l=0}^{b^i-1}b^{-i}\wal_l(x)\overline{\wal_l(y)}-\sum_{l=0}^{b^{i+1}-1}b^{-i-1}\wal_l(x)\overline{\wal_l(y)}\\
	&=\sum_{l=0}^{b^i-1}\frac{b-1}{b^{i+1}}\wal_l(x)\overline{\wal_l(y)}-\sum_{l=b^i}^{b^{i+1}-1}b^{-i-1}\wal_l(x)\overline{\wal_l(y)},
\end{align*}
and so
\[
\hat 1_{D_i}(l)=\begin{cases}
\frac{(-1)(1-b)^{d(i,l)}}{b^{1+i}}	&\text{if } l<b^{i+1},\\
0 &\text{otherwise}.
\end{cases}
\]
Since $1_{D_\BFi^s}(\BFx,\BFy)=\prod_{j=1}^s1_{D_{i_j}}(x_j,y_j)$ and  $d(\BFi,\BFl)=\sum_{j=1}^sd(i_j,l_j)$,  the result follows.
\end{proof}

The formula for the Walsh coefficient $\hat \psi(\BFl)$ of the joint pdf depends on the non-zero coordinates of $\BFl$. We call the coordinates on which $\BFl$ is non-zero the support of $\BFl$ and define the following useful vector.

\begin{definition}
Given $\BFl\in\N^s$ we define its support vector $\supp(\BFl)$ to be the vector $\BFr$ whose $j^{\text{th}}$ coordinate is
\[
r_j=\begin{cases}
1	&\text{if } l_j>0,\\
0	&\text{if } l_j=0.
\end{cases}
\]
\end{definition}

We can now write the joint pdf using its base $b$ Walsh decomposition.

\begin{proposition}
Let $\tilde P_n$ be a scrambled digital $(t,m,s$)-net in base $b$ whose projection onto the $j^{\text{th}}$ coordinate is a $(0,m,s)$-net and let $\psi(\BFx,\BFy)$ be the joint pdf of two distinct points randomly chosen from $\tilde P_n$. Then
\begin{enumerate}
\item[(i)] The base $b$ Walsh decomposition of $\psi(\BFx,\BFy)$ takes the form
\[
\psi(\BFx,\BFy)=1+\sum_{\BFl\in\N^s,\,\BFl\neq0}\hat\psi(\BFl)\mbox{wal}_{\itbf{l}}(\itbf{x})\overline{\mbox{wal}_{\itbf{l}}(\itbf{y})}\text{, and}
\]
\item[(ii)] for $\mathbf 0\ne \BFl\in\N^s$, the value of $\hat\psi(\BFl)$ in part (i) is
\[
\hat\psi(\BFl)=\frac{1}{n(n-1)}\Big(\frac{b}{b-1}\Big)^{r}\sum_{\BFe\in\{0,1\}^s,\, \BFe\leq\BFr}(-1)^eb^{-e}M_b(\BFk-\BFe;\tilde P_n),
\]
where $\BFr:=\supp(\BFl)$ and $\BFk:=(|l_1|,\dots,|l_s|)$.
\end{enumerate}
\label{prop:walshcoeff1}
\end{proposition}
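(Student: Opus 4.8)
The plan is to prove (i) first, as it is essentially a bookkeeping consequence of the preceding lemmas, and then obtain (ii) by a careful reorganization of the resulting sum. For part (i), I would start from the fact — established just before Theorem~\ref{thm:jointpdf} and in the opening of Section~\ref{sec:walshdecompjointpdf} — that $\psi(\BFx,\BFy)$ is constant on each region $D_\BFi^s$, so that
\[
\psi(\BFx,\BFy)=\sum_{\BFi\in\N^s}c_\BFi\, 1_{D_\BFi^s}(\BFx,\BFy),
\]
where $c_\BFi=\frac{N_b(\BFi;\tilde P_n)}{n(n-1)}\frac{b^{s+i}}{(b-1)^s}$ by Theorem~\ref{thm:jointpdf}. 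Substituting the second lemma's expansion of $1_{D_\BFi^s}$ and interchanging the (absolutely convergent, since $\psi\in L^1$ and the decomposition is finite on each region) sums over $\BFi$ and $\BFl$ gives a Walsh series $\psi(\BFx,\BFy)=\sum_{\BFl\in\N^s}\hat\psi(\BFl)\wal_\BFl(\BFx)\overline{\wal_\BFl(\BFy)}$ with $\hat\psi(\BFl)=\sum_{\BFi}c_\BFi\,\hat 1_{D_\BFi^s}(\BFl)$. The constant term $\hat\psi(\mathbf 0)=1$ follows because $\psi$ is a probability density, or alternatively by direct evaluation; this yields the ``$1+$'' in the statement.

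For part (ii) I would compute $\hat\psi(\BFl)$ from $\sum_\BFi c_\BFi\,\hat 1_{D_\BFi^s}(\BFl)$. Fix $\BFl\ne\mathbf 0$, write $\BFk=(|l_1|,\dots,|l_s|)$ and $\BFr=\supp(\BFl)$, and recall from the second lemma that $\hat 1_{D_\BFi^s}(\BFl)$ is nonzero precisely when $l_j<b^{i_j+1}$ for all $j$, i.e.\ when $i_j\ge k_j-1$ coordinatewise, with value $\frac{(-1)^s(1-b)^{d(\BFi,\BFl)}}{b^{s+i}}$. The exponent $d(\BFi,\BFl)$ counts the $j$ with $l_j<b^{i_j}$, equivalently with $i_j\ge k_j$; so on the summation range each coordinate contributes a factor $(1-b)$ exactly when $i_j\ge k_j$ and a factor $1$ when $i_j=k_j-1$ (which forces $l_j>0$, i.e.\ $r_j=1$). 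The $b^{s+i}$ in $\hat 1_{D_\BFi^s}$ cancels against the $b^{s+i}$ in $c_\BFi$, leaving $\hat\psi(\BFl)=\frac{(-1)^s}{n(n-1)(b-1)^s}\sum_{\BFi\ge\BFk-\BFr}(1-b)^{d(\BFi,\BFl)}N_b(\BFi;\tilde P_n)$. The idea is then to split this sum over each coordinate according to whether $i_j=k_j-1$ or $i_j\ge k_j$, i.e.\ introduce $\BFe\in\{0,1\}^s$ with $e_j=1$ meaning $i_j=k_j-1$ (only allowed when $r_j=1$, hence $\BFe\le\BFr$), and use the identity $\sum_{\BFi\ge\BFk}(1-b)^{d(\BFi,\BFl)}N_b(\BFi) = $ (up to the right power of $(1-b)$) something expressible through $M_b$ via~\eqref{eq:countingnumberequality}.

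The main obstacle — and the step I would spend the most care on — is carrying out this last resummation cleanly: converting the alternating sum $\sum_\BFi (1-b)^{d(\BFi,\BFl)}N_b(\BFi;\tilde P_n)$ over the orthant $\BFi\ge\BFk-\BFr$ into the finite combination $\sum_{\BFe\le\BFr}(-1)^e b^{-e}M_b(\BFk-\BFe;\tilde P_n)$ with the prefactor $\big(\tfrac{b}{b-1}\big)^r$. The cleanest route is to work one coordinate at a time: for a single coordinate, one shows $\sum_{i\ge k}(1-b)\,(\text{tail contribution}) + (\text{boundary term at }i=k-1)$ telescopes, using $M_b(\BFk;\tilde P_n)=\sum_{\BFi\ge\BFk}N_b(\BFi;\tilde P_n)$ from~\eqref{eq:countingnumberequality}, so that after summing a geometric-type series in each coordinate the factor $(1-b)$ turns into $\frac{b}{b-1}$ via $1+\frac{1-b}{?}$-type manipulations, and the two cases $e_j\in\{0,1\}$ record whether the coordinate-$j$ sum started at $k_j$ or at $k_j-1$. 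A convenient device is the elementary identity, for a summable sequence $a_{k},a_{k+1},\dots$ with tails $A_k=\sum_{i\ge k}a_i$, that $\sum_{i\ge k}(1-b)A_{i+1}+A_k = \ldots$; iterating this across the $s$ coordinates (noting that $N_b$ factors are not products, so one must be a little careful and instead induct on $s$ using $D_\BFi^s=D_{i_s}\times D_\BFi^{s-1}$ and the multiplicativity of $\hat 1_{D_\BFi^s}$) produces exactly the claimed $2^r$-term formula. The sign $(-1)^s$ combines with $(-1)^{s-r}$ hidden in the boundary bookkeeping to leave $(-1)^e$, and collecting the powers of $b$ and $(b-1)$ gives the stated prefactor; I would verify the $s=1$ case explicitly as a sanity check before presenting the induction.
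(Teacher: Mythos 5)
Your plan is correct, and its skeleton is the same as the paper's: write $\psi=\sum_{\BFi}\psi_{\BFi}1_{D_{\BFi}^s}$ with $\psi_{\BFi}$ given by Theorem~\ref{thm:jointpdf}, substitute the Walsh expansion of $1_{D_{\BFi}^s}$ so the $b^{s+i}$ factors cancel, get $\hat\psi(\mathbf 0)=1$ from $\int\psi=1$, and reduce part (ii) to evaluating $\tfrac{(-1)^s}{n(n-1)(b-1)^s}\sum_{\BFi}(1-b)^{d(\BFi,\BFl)}N_b(\BFi;\tilde P_n)$. Where you genuinely diverge is the conversion of this orthant sum into $M_b$ terms. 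The paper expands $(1-b)^{d(\BFi,\BFl)}=\sum_{a}(-b)^a\binom{d(\BFi,\BFl)}{a}$ and proves, by counting how many $\BFe\in\{0,1\}^s$ with $s-e=a$ are compatible with a given $\BFi$, that the triple sum equals $\sum_{\BFe\in\{0,1\}^s}(-b)^{s-e}M_b(\BFk-\BFe;\tilde P_n)$ with prefactor $(b/(b-1))^s$, and only afterwards collapses the coordinates with $r_j=0$ via the binomial identity $\sum_{i=0}^{s-r}(-b)^{-i}\binom{s-r}{i}=((b-1)/b)^{s-r}$ to reach the restricted sum over $\BFe\le\BFr$. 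You instead partition the orthant by which coordinates sit at the boundary $i_j=k_j-1$ (on each cell $d(\BFi,\BFl)=s-e$ is constant), and convert the exact-boundary cell sums into differences of $M_b$'s via \eqref{eq:countingnumberequality}; after the inclusion--exclusion the inner binomial sum over $\BFe$ between a fixed $\BFe''$ and $\BFr$ evaluates to $(1-b)^{s-e''}(b/(b-1))^{r-e''}$, which lands directly on the restricted $\BFe\le\BFr$ formula --- I verified this reproduces the stated coefficient, and your $s=1$ check $N_b(k-1)+(1-b)M_b(k)=M_b(k-1)-bM_b(k)$ is exactly the right base case. What each route buys: the paper's multiplicity count avoids inclusion--exclusion but needs the extra collapsing step for the $r_j=0$ coordinates; yours gets the $\BFe\le\BFr$ form in one pass but must handle the fact that $N_b$ does not factor across coordinates (your suggested induction on $s$ via the multiplicativity of $\hat 1_{D_{\BFi}^s}$ handles this). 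The only caveat is that, as written, the decisive identity is left as an ellipsis and the sign bookkeeping is asserted rather than computed, so this is a verifiably correct plan rather than a finished proof.
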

\vspace{-0.5cm}

\begin{proof}
The joint pdf is constant on the $D^s_\BFi$ regions, and we denote these values to be $\psi_{\BFi}$.  We calculate
\begin{align*}
\hat\psi(\BFl)	&=\int_{[0,1)^{2s}}\psi(\BFx,\BFy)\mbox{wal}_{\BFl}(\BFx)\overline{\wal_{\BFl}(\BFy)}d\BFx d\BFy\\
&= \int_{[0,1)^{2s}}\sum_{\BFi\in\N^s} \psi_{\BFi}1_{D_{\BFi}^s}(\BFx,\BFy)\wal_{\BFl}(\BFx)\overline{\wal_{\BFl}(\BFy)} d\BFx d\BFy\\
&=\sum_{\BFi\in\N^s} \psi_\BFi\hat 1_{D_{\BFi}^s}(\BFl)\\
&=\sum_{\BFi\in\N^s,\, |l_j|-1\leq i_j} \psi_{\BFi}\hat 1_{D_{\BFi}^s}(\BFl)\\
&=\sum_{\BFi\in\N^s,\, |l_j|-1\leq i_j}\frac{N_b(\BFi;\tilde P_n)}{n(n-1)}\frac{b^{s+i}}{(b-1)^s}\frac{(-1)^s(1-b)^{d(\BFi,\BFl)}}{b^{s+i}}\\
&=\frac{1}{n(n-1)}\Big(\frac{-1}{b-1}\Big)^s\sum_{\BFi\in\N^s,\, |l_j|-1\leq i_j}N_b(\BFi;\tilde P_n)(1-b)^{d(\BFi,\BFl)}\\
&=\frac{1}{n(n-1)}\Big(\frac{-1}{b-1}\Big)^s\sum_{\BFi\in\N^s,\, |l_j|-1\leq i_j}\left(\sum_{a=0}^{d(\BFi,\BFl)}N_b(\BFi;\tilde P_n)(-b)^{a}\binom{d(\BFi,\BFl)}{a}\right).
\end{align*}

Note that the fourth equality holds because $\hat1_{D_\BFi^s}(\BFl)=0$ whenever there is some $j$ for which $|l_j|\geq i_j$. The fifth equality is from Theorem \ref{thm:jointpdf}.

Next, we will show that
\begin{align*}\label{eq:reordersum}
\sum_{\BFi\in\N^s,\, |l_j|-1\leq i_j} & \left(\sum_{a=0}^{d(\BFi,\BFl)} N_b(\BFi;\tilde P_n)(-b)^{a}\binom{d(\BFi,\BFl)}{a}\right)\\
&=\sum_{\BFe\in\{0,1\}^s}\left(\sum_{\BFi\in\N^s,\, |l_j|-e_j\leq i_j} (-b)^{s-e}N_b(\BFi;\tilde P_n)\right).
\end{align*}

For this to hold it must be that, for a fixed $\BFi$ and $a$, the term $(-b)^aN_b(\BFi;\tilde P_n)$ appears exactly $\binom{d(\BFi,\BFl)}{a}$ times on the right hand side.  Therefore we must show that the number of vectors $\BFe\in\{0,1\}^s$ such that $s-e=a$ and $|l_j|-e_j\leq i_j$ for $j=1,\dots,s$ is $\binom{d(\BFi,\BFl)}{a}$. To satisfy the second condition, $e_j=1$ in the $s-d(\BFi,\BFl)$ coordinates where $|l_j|-1=i_j$, leaving $e-s+d(\BFi,\BFl)=s-a-s+d(\BFi,\BFl)=d(\BFi,\BFl)-a$ ones that can be in any of the remaining $d(\BFi,\BFl)$ coordinates, for which there are indeed $\binom{d(\BFi,\BFl)}{a}$ possibilities. Using \eqref{eq:countingnumberequality}, we see that for all $\BFe\in\{0,1\}^s$,
\[
\sum_{\BFi\in\N^s,\, |l_j|-e_j\leq i_j} (-b)^{s-e}N_b(\BFi;\tilde P_n)=(-b)^{s-e}M_b(\BFk-\BFe;\tilde P_n),
\]
where $\BFk=(|l_1|,\dots,|l_s|)$, we may continue our original calculation to obtain
\begin{equation}\label{eq:psihatwithM}
\hat\psi(\BFl)=\frac{1}{n(n-1)}\Big(\frac{b}{b-1}\Big)^s\sum_{\BFe\in\{0,1\}^s}(-1)^eb^{-e}M_b(\BFk-\BFe;\tilde P_n).
\end{equation}

Observe that since
\[
M_b(\BFk-\BFe;\tilde P_n)=M_b(\max(\BFk-\BFe;\mathbf 0);\tilde P_n)
\]
(the maximum is taken coordinate-wise), the set of values of $M_b(\BFk-\BFe;\tilde P_n)$ where $\BFe\in\{0,1\}^s$ is the same with or without the restriction $\BFe\leq\BFr:=\supp(\BFk)$. Thus,
\begin{align*}
\sum_{\BFe\in\{0,1\}^s}(-1)^eb^{-e}&M_b(\BFk-\BFe;\tilde P_n)\\
	&=\sum_{\BFe\in\{0,1\}^s, \BFe\leq\BFr}\left(\sum_{\BFi\in\N^s, \BFi\leq \mathbf 1-\BFr} (-b)^{-e-i}M_b(\BFk-\BFe;\tilde P_n)\right)\\
	&=\sum_{\BFe\in\{0,1\}^s,\, \BFe\leq\BFr}(-b)^{-e}M_b(\BFk-\BFe;\tilde P_n)\sum_{i=0}^{s-r}(-b)^{-i}\binom{s-r}{i}\\
	&=\Big(\frac{b-1}{b}\Big)^{s-r}\sum_{\BFe\in\{0,1\}^s,\, \BFe\leq\BFr}(-b)^{-e}M_b(\BFk-\BFe;\tilde P_n).
\end{align*}
Substituting this into \eqref{eq:psihatwithM} completes the proof.
\end{proof}

We may simplify the formula for the Walsh coefficients of the joint pdf further in the special case $t=0$.

\begin{theorem}\label{thm:finaldecomp}
Let $\tilde P_n$ be a scrambled $(0,m,s)$-net in base $b$ and let $\psi(\BFx,\BFy)$ be the joint pdf of two distinct points randomly chosen from $\tilde P_n$. Then
\begin{enumerate}
\item[(i)] the base $b$ Walsh decomposition of $\psi(\BFx,\BFy)$ takes the form
\begin{equation*}\label{eq:walshcoeff}
\psi(\BFx,\BFy)= 1+\sum_{\BFl\in\N^s,\,\BFl\neq\mathbf 0}\hat\psi(\BFl)\wal_{\BFl}(\BFx)\overline{\wal_\BFl(\BFy)},\text{ and}
\end{equation*}
\item[(ii)] for $\mathbf 0\neq \BFl\in\N^s$, the value of $\hat\psi(\BFl)$ in part (i) is 
\begin{equation*}\label{thm:walshcoeff3}
\hat{\psi}(\BFl)=-(n-1)^{-1}(1-b)^{1-r}\sum_{i=0}^{r-1-c}(-b)^i\binom{r-1}{i}
\end{equation*}
where $\BFr=\supp(\BFl)$ and $c=\max(|l_1|+\cdots+|l_s|-m,0)$.
\end{enumerate}
\end{theorem}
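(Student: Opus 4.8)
Part (i) needs no work: a $(0,m,s)$-net is $\BFk$-equidistributed for every $\BFk=(k_1,0,\dots,0)$ with $k_1\le m$, so each of its one-dimensional projections is a $(0,m,1)$-net and Proposition~\ref{prop:walshcoeff1} applies with $t=0$. The content is therefore entirely in part (ii), and the plan is to substitute the $(0,m,s)$-net values of the counting numbers $M_b$ into the formula of Proposition~\ref{prop:walshcoeff1}(ii) and collapse the resulting alternating sum.

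The first step is to pin down $M_b$. For a scrambled $(0,m,s)$-net I would establish
\[
M_b(\BFk;\tilde P_n)=n\bigl(\max(b^{m-k},1)-1\bigr)\qquad\text{for all }\BFk\in\N^s.
\]
For $k\le m$ this is the equidistribution count already recorded in Section~\ref{sec:prelim}; for $k>m$ it equals $0$, because two distinct points lying in a common elementary $\BFk$-interval could, after lowering coordinates, be placed in a common elementary $\BFk'$-interval with $k'=m$, which by equidistribution contains a single point. Scrambling does not change these counts, since $(\BFV_l,\BFV_j)\in D_\BFi^s$ forces $(\BFU_l,\BFU_j)\in D_\BFi^s$.

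Next I would collapse the sum. With $\BFr=\supp(\BFl)$ and $\BFk=(|l_1|,\dots,|l_s|)$, the quantity $M_b(\BFk-\BFe;\tilde P_n)$ depends on $\BFe\le\BFr$ only through $e=e_1+\cdots+e_s$, exactly $\binom{r}{e}$ admissible vectors $\BFe$ realize each value of $e$, and $(-1)^eb^{-e}=(-b)^{-e}$; hence
\[
\sum_{\BFe\in\{0,1\}^s,\,\BFe\le\BFr}(-1)^eb^{-e}M_b(\BFk-\BFe;\tilde P_n)=n\sum_{j=0}^{r}\binom{r}{j}(-b)^{-j}\bigl(b^{\max(m-k+j,0)}-1\bigr).
\]
The ``$-1$'' part sums to $-n(1-1/b)^r$. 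In the remainder, $\max(m-k+j,0)=m-k+j$ as soon as $j\ge c:=\max(k-m,0)$, and the full ``pure power'' sum $\sum_j\binom{r}{j}(-b)^{-j}b^{m-k+j}=b^{m-k}\sum_j\binom{r}{j}(-1)^j$ vanishes because $\BFl\neq\mathbf 0$ forces $r\ge 1$; so only a correction over $0\le j<c$ survives, and after using $(1-1/b)^r=\sum_{j=0}^{r}\binom{r}{j}(-b)^{-j}$ one is left with a partial sum of $(1-b)^r$ and a partial alternating binomial sum.

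The last step closes those partial sums using two Pascal-triangle identities: $\sum_{j=c}^{r}(-1)^j\binom{r}{j}=(-1)^c\binom{r-1}{c-1}$, and, after the reflection $j\mapsto r-j$, $\sum_{i=0}^{r-c}\binom{r}{i}(-b)^i=(1-b)\sum_{i=0}^{r-1-c}\binom{r-1}{i}(-b)^i+\binom{r-1}{r-c}(-b)^{r-c}$, together with $\binom{r-1}{r-c}=\binom{r-1}{c-1}$. After substituting, the two ``$\binom{r-1}{c-1}$'' contributions cancel and one obtains
\[
\sum_{\BFe\le\BFr}(-1)^eb^{-e}M_b(\BFk-\BFe;\tilde P_n)=-n(1-b)(-b)^{-r}\sum_{i=0}^{r-1-c}\binom{r-1}{i}(-b)^i.
\]
Feeding this into Proposition~\ref{prop:walshcoeff1}(ii), the prefactor collapses via $\frac{1}{n(n-1)}\bigl(\frac{b}{b-1}\bigr)^{r}\cdot(-n)(1-b)(-b)^{-r}=-(n-1)^{-1}(1-b)^{1-r}$, which gives the stated formula. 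I expect the sign/exponent bookkeeping in this final cancellation — and carrying the signs through the two Pascal identities — to be the only genuine difficulty; the degenerate cases $r-1-c<0$ (forcing $\hat\psi(\BFl)=0$) and $c=0$ (giving $\hat\psi(\BFl)=-1/(n-1)$) need no separate treatment once one adopts the conventions $\binom{r-1}{-1}=0$ and ``empty sum $=0$''.
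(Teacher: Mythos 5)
Your proposal is correct and follows essentially the same route as the paper's proof: both substitute the explicit $(0,m,s)$-net values of $M_b(\BFk-\BFe;\tilde P_n)$ into Proposition~\ref{prop:walshcoeff1}(ii), reduce the sum over $\BFe\le\BFr$ to a single sum over $e$ with multiplicity $\binom{r}{e}$, and collapse the result using the partial alternating binomial identity $\sum_{i=0}^{k}(-1)^i\binom{a}{i}=(-1)^k\binom{a-1}{k}$. The only differences are in how the intermediate binomial algebra is organized (you exploit the vanishing of the full alternating sum plus a Pascal split, where the paper substitutes $1-b^{-j}=(b-1)\sum_{i=1}^{j}b^{-i}$ and interchanges a double sum), and your explicit use of $M_b(\BFk;\tilde P_n)=n(\max(b^{m-k},1)-1)$ is in fact slightly more careful than the paper's stated intermediate value, which is only correct up to a term that cancels in the alternating sum.
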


\begin{proof}
Fix $\mathbf 0\neq \BFl\in\N^s$ and set $\BFk=(|l_1|,\dots,|l_s|)$. For a scrambled $(0,m,s)$-net in base $b$ we have
\[
M_b(\BFk-\BFe;\tilde P_n)=\begin{cases}\
n(b^{e-c}-1)	&\text{if } e\geq c,\\
0	&\text{otherwise.}
\end{cases}
\]
Thus, Proposition \ref{prop:walshcoeff1} (ii) becomes
\begin{align*}
\hat\psi(\BFl)
	&=\frac{1}{n-1}\Big(\frac{b}{b-1}\Big)^r\sum_{e=c}^r(-1)^eb^{-e}(b^{e-c}-1)\binom{r}{e}\\
	&=\frac{1}{n-1}\Big(\frac{b}{b-1}\Big)^rb^{-c}\sum_{j=0}^{r-c}(-1)^{c+j}(1-b^{-j})\binom{r}{c+j}.\\
\end{align*}
From here we make the substitution $1-b^{-j}=(b-1)\sum_{i=1}^jb^{-i}$ so that
\begin{align*}
\sum_{j=0}^{r-c}(-1)^{c+j}(1-b^{-j})\binom{r}{c+j}
	&=(b-1)\sum_{j=0}^{r-c}\sum_{i=1}^j(-1)^{c+j}b^{-i}\binom{r}{c+j}\\
	&=(b-1)\sum_{i=1}^{r-c}b^{-i}\sum_{j=i}^{r-c}(-1)^{c+j}\binom{r}{c+j},
\end{align*}
where to change the order of the double sum we observed that $i$ ranges from $1$ to $r-c$ and in order for $(-1)^{c+j}b^{-i}\binom{r}{c+j}$ to appear we must have $i\leq j$. Next we change the index $j\mapsto i+j$ in the inner sum to get
\begin{align*}
\sum_{j=i}^{r-c}(-1)^{c+j}\binom{r}{c+j}
	&=(-1)^r\sum_{j=0}^{r-c-i}(-1)^{r-c-i-j}\binom{r}{r-c-i-j}\\
	&=(-1)^r\sum_{j=0}^{r-c-i}(-1)^j\binom{r}{j}\\
	&=(-1)^r(-1)^{r-c-i}\binom{r-1}{r-c-i},
\end{align*}
where the last equality used a known identity
\[
\sum_{i=1}^k(-1)^i\binom{a}{i}=(-1)^k\binom{a-1}{k}.
\]
Putting this all together gives
\begin{align*}
\hat\psi(\BFl)
	&=-(n-1)^{-1}(1-b)^{1-r}\sum_{i=1}^{r-c}(-b)^{r-c-i}\binom{r-1}{r-c-i}\\
	&=-(n-1)^{-1}(1-b)^{1-r}\sum_{i=0}^{r-1-c}(-b)^i\binom{r-1}{i}.
\end{align*}
\end{proof}

The previous theorem shows that $\hat{\psi}(\textit{\textbf{l}})$ depends only on $c$ and the number of non-zero coordinates in $\textbf{\textit{l}}$. In the next section it will be helpful to re-index the Walsh coefficients of the joint pdf of a scrambled $(0,m,s)$-net in base $b$. Therefore, we make the following definitions. 

\begin{definition}(Walsh Coefficients)\label{def:Psi}
\vspace{-0.3cm}

\begin{enumerate}[itemsep=1pt]
    \item Denote $\hat \psi_\BFk$ to be the value of the Walsh coefficient $\hat{\psi}(\BFl)$ when $\BFl\in L_\BFk$.
    \item For $b,c,r,s\in\mathbb{N}$, $b\geq 2$,
$$\Psi_b^s(r,c):=-(1-b)^{1-r}\cdot\sum\limits_{i=0}^{r-1-c}(-b)^i\binom{r-1}{i}.$$
\end{enumerate}
\end{definition}
\vspace{-0.3cm}

The second part of the definition gives a covariance equivalent of Owen's gain coefficients \cite{Owen03} that were mentioned in the previous section. However, his analysis only focused on the largest coefficient for which he gave a bound. In the next section, we illustrate that we can do more.

\section{Decay Condition on Walsh Coefficients}
\label{sec:averagecase}

Using the notation put forth in Section \ref{sec:jointpdfnets}, Section \ref{sec:walsh} and Theorem \ref{thm:finaldecomp}(i), we obtain the following formula for the covariance term in \eqref{eq:covariance}, that is,
\[
\cov(f(\BFU_I),f(\BFU_J))=\sum\limits_{\textbf{0}\neq\itbf{l}\in\N^s}|\hat{f}(\itbf{l})|^2\hat\psi(\itbf{l})=\sum\limits_{\textbf{0}\neq\itbf{k}\in\N^s}\sigma_{\itbf{k}}^2(f)\hat{\psi}_\itbf{k}.
\]
 The remainder of the paper will be devoted to proving that for a particular kind of function, $f$, this value is less than or equal to 0.

To begin, we must make an assumption on the values of $|\hat{f}(\itbf{l})|^2$ for  $\itbf{l}\in L_{\itbf{k}}$ or $\sigma_\itbf{k}^2(f)$. Perhaps the most natural conditions we could choose are either
\[
|\hat{f}(\BFl)|^2=x^k\alpha_f \quad\text{or}\quad \sigma_{\itbf{k}}^2(f)=x^k\alpha_f,
\]
s.t. the Walsh series converges (i.e. $x\in [0,b^{-1})$ for the former and $x\in[0,1)$ for the latter), and  $\alpha_f$ is a positive constant that depends on the function. For the purpose of our analysis, we note that $\alpha_f$ can be ignored because multiplication by a positive constant does not change the sign of $\cov(f(\BFU_I),f(\BFU_J))$.

We can rewrite the first decay condition using the fact that $|L_{\itbf{k}}|=\left(\frac{b-1}{b}\right)^r\cdot b^k$ where $r$ is the number of non-zero coordinates of $\itbf{k}$ as
\begin{equation}\label{eq:decay}
\sigma_{\itbf{k}}^2(f)=\left(\frac{b-1}{b}\right)^rb^kx^k\alpha_f,
\end{equation}
which is a less restrictive condition. Both of these decay conditions are a special case of the function $\sigma_{\itbf{k}}^2(f)=a^rx^k \alpha_f$ which appears in numerous results with an inequality rather than an equality. For example, see \cite[Lemma 13.23]{DickPillichshammer10}. With this formulation, we have evidence to suggest that any values of $a,x\in[0,1)$ in \eqref{eq:decay} leads to a covariance term that is not positive, but have been unable to obtain a proof for any case other than for $a=\frac{b-1}{b}$ due to the limitations of the symbolic computation software, as we will show in the last two sections of the paper. Thus, our strategy is to fix $a$ and view the covariance as a polynomial in $x$ of degree $m+s-1$ and show that these polynomials are not positive between 0 and 1. 

\begin{lemma}\label{lem:covpoly}
Let $\tilde P_n=\{\BFU_1,\dots,\BFU_n\}$ be a scrambled $(0,m,s)$-net in base $b$. Suppose that $f\in L^2([0,1)^s)$ is a function such that \[
\sigma_\BFk^2(f)=a^r(bx)^k\alpha_f\] where $\alpha_f$ is a positive constant that depends on $f$, $a\in[0,1]$ and $x\in[0,1/b)$ for all $\mathbf 0 \neq\BFk\in\N^s$ with $k\leq m+r-1$ where $\BFr=\supp(\BFk)$. Suppose further that $n=b^m$. Then we can simplify $\tfrac{b^m-1}{\alpha_f}\cdot\cov(f(\BFU_I),f(\BFU_J))$ to the polynomial
\begin{equation}\label{eq:mainpolys}
\sum_{k=1}^{m+s-1}\Bigg(\sum_{r=1}^s \binom{s}{r}\binom{k-1}{r-1}a^r\Psi_b^s(r,c_m(k))\Bigg) (bx)^k.
\end{equation}
\end{lemma}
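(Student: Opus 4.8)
The plan is to start from the covariance formula derived just before the lemma, namely
\[
\cov(f(\BFU_I),f(\BFU_J))=\sum_{\mathbf 0\ne\BFk\in\N^s}\sigma_\BFk^2(f)\,\hat\psi_\BFk,
\]
substitute the assumed decay $\sigma_\BFk^2(f)=a^r(bx)^k\alpha_f$, cancel $\alpha_f$, and multiply through by $b^m-1=n-1$ to clear the denominator hidden in $\hat\psi_\BFk$. By Theorem~\ref{thm:finaldecomp}(ii) together with Definition~\ref{def:Psi}, we have $(n-1)\hat\psi_\BFk=-(1-b)^{1-r}\sum_{i=0}^{r-1-c}(-b)^i\binom{r-1}{i}=\Psi_b^s(r,c)$ where $r=\supp(\BFk)$ (the number of nonzero coordinates) and $c=\max(k-m,0)$ with $k=k_1+\cdots+k_s$. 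So the whole sum becomes $\sum_{\mathbf 0\ne\BFk}a^r(bx)^k\Psi_b^s(r,c)$.

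Next I would regroup this sum according to the two quantities on which the summand actually depends: the coordinate sum $k=k_1+\cdots+k_s$ and the number of nonzero coordinates $r$. For fixed $k\ge 1$ and fixed $r$ with $1\le r\le\min(s,k)$, the number of vectors $\BFk\in\N^s$ with exactly $r$ nonzero coordinates summing to $k$ is $\binom{s}{r}$ (choice of which coordinates are nonzero) times the number of compositions of $k$ into $r$ positive parts, which is $\binom{k-1}{r-1}$. Hence the coefficient of $(bx)^k$ is exactly $\sum_{r=1}^{\min(s,k)}\binom{s}{r}\binom{k-1}{r-1}a^r\Psi_b^s(r,c)$; since $\binom{k-1}{r-1}=0$ when $r>k$, the inner sum may be written from $r=1$ to $s$ as in \eqref{eq:mainpolys}. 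Writing $c_m(k)=\max(k-m,0)$ for the common value of $c$ at a given $k$ matches the statement.

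The only remaining point is the range of $k$: a priori $k$ runs over all integers $\ge 1$, producing an infinite series, but the lemma claims a polynomial of degree $m+s-1$. The hypothesis only imposes the decay $\sigma_\BFk^2(f)=a^r(bx)^k\alpha_f$ for $k\le m+r-1$, i.e.\ precisely on the $\BFk$ for which $\hat\psi_\BFk$ can be nonzero. So the key step — and the one requiring the most care — is to observe that $\Psi_b^s(r,c)=0$ whenever $c\ge r$, i.e.\ whenever $k-m\ge r$, equivalently $k\ge m+r+1$: in that case the sum $\sum_{i=0}^{r-1-c}(-b)^i\binom{r-1}{i}$ is empty and vanishes. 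When $c=r-1$ (that is $k=m+r-1$) the sum is the single term $i=0$, and for $c<r-1$ it is a genuine partial sum; in all these cases $k\le m+r-1\le m+s-1$, so the assumed decay applies and the contributions are retained. Thus every $\BFk$ with $k\ge m+s$ contributes zero (since then $k\ge m+r+1$ for all $r\le s$), the series truncates at $k=m+s-1$, and no term whose coefficient is nonzero falls outside the region where the hypothesis was assumed. Collecting the surviving terms yields exactly \eqref{eq:mainpolys}, completing the proof. The main obstacle is simply making this truncation argument airtight — matching the cutoff $k\le m+r-1$ in the hypothesis against the vanishing locus $c\ge r$ of $\Psi_b^s$, and checking the boundary case $c=r-1$ — rather than any substantive computation.
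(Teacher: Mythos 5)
Your proposal is correct and follows essentially the same route as the paper's proof: substitute the decay hypothesis into $\cov(f(\BFU_I),f(\BFU_J))=\sum_{\mathbf 0\neq\BFk}\sigma_\BFk^2(f)\hat\psi_\BFk$, identify $(n-1)\hat\psi_\BFk$ with $\Psi_b^s(r,c_m(k))$ via Theorem~\ref{thm:finaldecomp}(ii) and Definition~\ref{def:Psi}, and regroup by $(r,k)$ using the composition count $\binom{s}{r}\binom{k-1}{r-1}$ — indeed you are more explicit than the paper about why the series truncates at $k=m+s-1$ and why the hypothesis need only hold for $k\le m+r-1$. One cosmetic slip: $c\ge r$ is equivalent to $k\ge m+r$, not $k\ge m+r+1$, but this only enlarges the set of vanishing terms and does not affect the argument.
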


\begin{proof}
Define $c_m(k)=\max(k-m,0)$. We have
\begin{align*}
\frac{b^m-1}{\alpha_f}\cdot\cov(f(\BFU_I),f(\BFU_J))
&=\frac{b^m-1}{\alpha_f}\cdot\sum_{\mathbf 0\neq\BFk\in\N^s}\sigma_\BFk^2(f)\hat\psi_\BFk\\
&=\sum_{\mathbf 0\neq\BFr\in\{0,1\}^s}\sum_{\BFr\leq\BFk\in\N^s} a^r (b x)^{k}\Psi_m^s(r,c_m(k))\\
&=\sum_{r=1}^s\binom{s}{r}\sum_{k=r}^{m+r-1}\binom{k-1}{r-1}a^r(bx)^{k}\Psi^s_m(r,c_m(k))\\
&=\sum_{k=1}^{m+s-1}\Bigg(\sum_{r=1}^{s}\binom{s}{r}\binom{k-1}{r-1}a^r \Psi_b^s(r,c_m(k))\Bigg)(bx)^k,
\end{align*}
because there are $\binom{k-1}{r-1}$ ways to partition $k$ into $r$ non-zero parts.
\end{proof}

We finish this section with an application of our final result, namely the average case covariance. To begin to do this,  we must make sense of integration in $L^2([0,1)^s)$.

\begin{definition}
Let $\mu$ be a Borel probability measure on $L^2([0,1)^s)$ and let 
\[
F:L^2([0,1)^s)\rightarrow L^2([0,1)^s).
\]
We say that $F$ is \emph{Pettis integrable (weak integrable) with respect to $\mu$} if for every continuous linear functional $\phi$ on $L^2([0,1)^s)$, the complex-valued function
\[
\phi\circ F:(L^2([0,1)^s,\mu)\rightarrow\C
\]
is integrable and there exists some $I(f)\in L^2([0,1)^s)$ such that
\[
\phi(I(f))=\int_{f\in L^2([0,1)^s)}\phi\circ F(f)d\mu(f)
\]
holds for all $\phi$. In this case, we write
\[
I(f)=\int_{f\in L^2([0,1)^s)} F(f)d\mu(f)
\]
and say that $I(f)$ is \emph{the (Pettis) integral of $F$ with respect to $\mu$}.
\end{definition}

It is worth a few lines to explain why this definition is useful for us. For a fixed $\phi$, we can understand $\int_{f\in L^2([0,1)^s)}\phi\circ F(f)\, d\mu(f)$ using the standard Lebesgue integral. The Pettis integral simply guarantees that the equation
\[
\phi\left(\int_{f\in L^2([0,1)^s)}F(f)\, d\mu(f)\right)=\int_{f\in L^2([0,1)^s)}\phi \circ F(f)\, d\mu(f)
\]
makes sense (note that since continuous linear functionals on $L^2([0,1)^s)$ separate points, $I(f)$ must be unique). As an example, consider the function $F(f)=f$. Then the integral
\[
\int_{f\in L^2([0,1)^s)} \phi(f)\, d\mu(f)
\]
gives the \textit{average value} of $\phi$ with respect to $\mu$. Thus, if there is an $h\in L^2([0,1)^s)$ such that $\phi(h)=\int_{f\in L^2([0,1)^s)}\phi(f)\, d\mu(f)$, then with respect to continuous linear functionals, $h$ behaves exactly how the mean of $\mu$ in $L^2([0,1)^s)$ to behave. In this case, we say that $\mu$ admits a mean and call $h$ the \textit{mean function} of $\mu$.

The following lemma tells us that given a Borel probability measure $\mu$, the average covariance with respect to $\mu$ is just the covariance of the mean vector.

\begin{lemma}
\label{lem:expectedvalue}
Let $\mu$ be a Borel probability measure on $L^2([0,1)^s)$ that admits a mean function $h$.   Let $\tilde P_n=\{\BFU_1,\dots,\BFU_n\}$ be a scrambled $(t,m,s)$-net in base $b$ with joint pdf $\psi(\BFx,\BFy)$. Then
\[
\int_{f\in L^2([0,1)^s)}\cov(f(\BFU_I),f(\BFU_J))d\mu(f)=\sum_{\mathbf 0\neq\BFk\in\N^s}\sigma_\BFk^2(h)\psi_\BFk=\cov(h(\BFU_I),h(\BFU_J))
\]
\end{lemma}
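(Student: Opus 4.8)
The plan is to deduce the lemma from the covariance formula stated at the start of Section~\ref{sec:averagecase}. Combining the Walsh expansion of the joint pdf from Proposition~\ref{prop:walshcoeff1}(i) with the orthonormality of the Walsh system, expanding $f$ in its Walsh series and integrating term by term in \eqref{eq:covariance} gives, for every $f\in L^2([0,1)^s)$,
\[
\cov(f(\BFU_I),f(\BFU_J))=\sum_{\mathbf 0\neq\BFl\in\N^s}|\hat f(\BFl)|^2\,\hat\psi(\BFl)=\sum_{\mathbf 0\neq\BFk\in\N^s}\sigma_\BFk^2(f)\,\psi_\BFk ,
\]
where the second equality groups the $\BFl$'s into the blocks $L_\BFk$ and $\psi_\BFk$ denotes the common value $\hat\psi(\BFl)$, $\BFl\in L_\BFk$. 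Applying this identity with $f=h$ already identifies $\sum_{\mathbf 0\neq\BFk}\sigma_\BFk^2(h)\,\psi_\BFk$ with $\cov(h(\BFU_I),h(\BFU_J))$, so the whole statement reduces to showing that the $\mu$-average of the left-hand side equals $\sum_{\mathbf 0\neq\BFk}\sigma_\BFk^2(h)\,\psi_\BFk$; equivalently, that the $\mu$-average of each block energy $\sigma_\BFk^2(f)$ coincides with the block energy $\sigma_\BFk^2(h)$ of the mean function.

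First I would push the $\mu$-integral through the sum over $\BFk$. The coefficients $\psi_\BFk$ are uniformly bounded: by Proposition~\ref{prop:walshcoeff1}(ii) each $\hat\psi(\BFl)$ is a fixed finite combination of counting numbers $M_b(\BFk-\BFe;\tilde P_n)\le n(n-1)$, so $|\psi_\BFk|\le B$ for a constant $B$ depending only on $b$, $s$, $n$. Hence the partial sums $\sum_{\BFk}\sigma_\BFk^2(f)\,\psi_\BFk$ are bounded in absolute value by $B\sum_{\mathbf 0\neq\BFk}\sigma_\BFk^2(f)\le B\,\|f\|_2^2$, which is $\mu$-integrable (finiteness of $\int\|f\|_2^2\,d\mu$ coming with the hypothesis that $\mu$ admits a mean). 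Dominated convergence then gives $\int_{f}\cov(f(\BFU_I),f(\BFU_J))\,d\mu(f)=\sum_{\mathbf 0\neq\BFk}\psi_\BFk\int_{f}\sigma_\BFk^2(f)\,d\mu(f)$, and within a fixed block I would interchange the finite sum over $\BFl\in L_\BFk$ with the integral. Everything then rests on the coefficient-wise identity $\int_{f}|\hat f(\BFl)|^2\,d\mu(f)=|\hat h(\BFl)|^2$ for each $\mathbf 0\neq\BFl\in\N^s$: summing it over $\BFl\in L_\BFk$ yields $\int_{f}\sigma_\BFk^2(f)\,d\mu(f)=\sigma_\BFk^2(h)$, and re-summing over $\BFk$ via the displayed formula (now with $h$ in place of $f$) closes the chain of equalities.

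I expect that coefficient-wise identity to be the crux. The defining property of the mean function furnishes only the \emph{linear} relation $\hat h(\BFl)=\int_{f}\hat f(\BFl)\,d\mu(f)$, because for each fixed $\BFl$ the map $f\mapsto\hat f(\BFl)=\int f\,\overline{\wal_\BFl}$ is a continuous linear functional on $L^2([0,1)^s)$ and hence is governed by the Pettis-integral characterization of $h$; passing from this to the quadratic statement $\int_{f}|\hat f(\BFl)|^2\,d\mu(f)=|\hat h(\BFl)|^2$ is where the argument must invoke the precise meaning of ``$\mu$ admits a mean function'' (together with any accompanying regularity of $\mu$), and it is the step I anticipate will require the most care. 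By contrast, the two interchanges of summation and integration are routine once the uniform bound on $\psi_\BFk$ and the integrability of $f\mapsto\|f\|_2^2$ are in hand.
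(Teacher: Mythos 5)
You correctly reduce the lemma to the coefficient-wise identity $\int_{f}|\hat f(\BFl)|^2\,d\mu(f)=|\hat h(\BFl)|^2$, and your route is essentially the paper's: expand the covariance as $\sum_{\mathbf 0\neq\BFl}|\hat f(\BFl)|^2\hat\psi(\BFl)$, interchange the $\mu$-integral with the sum, and convert each coefficient via the mean function. Your justification of the interchange (the uniform bound on the $\hat\psi(\BFl)$ coming from $M_b(\cdot;\tilde P_n)\le n(n-1)$, together with integrability of $f\mapsto\|f\|_2^2$) is in fact more careful than the paper's, which performs the swap silently. But you stop short of proving the step you yourself identify as the crux, so as it stands the proposal is a reduction, not a proof.

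Your suspicion about that step is well founded, and this is the substantive issue. The Pettis-integral definition of the mean function yields only the first-moment relation $\hat h(\BFl)=\int_{f}\hat f(\BFl)\,d\mu(f)$, and by Jensen (or Cauchy--Schwarz) one has $|\hat h(\BFl)|^2=\bigl|\int_{f}\hat f(\BFl)\,d\mu(f)\bigr|^2\le\int_{f}|\hat f(\BFl)|^2\,d\mu(f)$, with equality only when $\hat f(\BFl)$ is $\mu$-a.s.\ constant; in general the two sides differ by the $\mu$-variance of the coefficient. So the quadratic identity cannot be derived from the stated hypotheses, and no amount of care with the interchanges will close this. You should know that the paper's own proof asserts exactly this identity in one line, writing $\int_{f}|\hat f(\BFl)|^2\hat\psi(\BFl)\,d\mu(f)=\bigl|\bigl\langle\int_{f}f\,d\mu(f),\wal_\BFl\bigr\rangle\bigr|^2\hat\psi(\BFl)$ with no further justification, so the gap you flagged is present there as well (a two-point measure supported on $\pm\wal_\BFl$ has mean function $h=0$ but nonzero average covariance, showing the equality genuinely needs extra hypotheses). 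Repairing the statement requires either assuming the coefficient variances vanish under $\mu$ or reformulating the right-hand side in terms of second moments of $\mu$ rather than the mean function alone.
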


\begin{proof}
By definition of the Pettis integral,
\begin{align*}
\int_{f\in L^2([0,1)^s)}\cov(f(\BFU_I),f(\BFU_J))d\mu(f)
&=\int_{f\in L^2([0,1)^s)}\Big( \sum_{\BFl\in\N^s} |\hat f(\BFl)|^2\hat \psi(\BFl)\Big) d\mu(f)\\
&=\sum_{\BFl\in\N^s}\Big|\Big\langle \int_{f\in L^2([0,1)^s)} f d\mu(f),\wal_\BFl\Big\rangle\Big|^2\hat\psi(\BFl)\\
&=\sum_{\BFl\in\N^s}|\hat h(\BFl)|^2\hat\psi(\BFl)\\
&=\sum_{\mathbf 0\neq\BFk\in\N^s}\sigma_\BFk^2(h)\psi_\BFk\\
&=\cov(h(\BFU_I),h(\BFU_J)).
\end{align*}
\end{proof}

\section{Employing Symbolic Computation}\label{sec:symcomp}

We wish to prove that the expected value of the covariance from Lemma~\ref{lem:covpoly} is not positive on $[0,1)$ for all $a\in[0,1]$ and $b,m,s\in\mathbb{N}\setminus\lbrace 0\rbrace.$ For simplicity, we will reduce a parameter, and illustrate how to do this for $a=(b-1)/b$. This particular choice happens to be well-suited to our computations and is a natural choice of decay to be able to cancel out many of the common factors in the polynomial \eqref{eq:mainpolys}. We remark that a $(0,m,s)$-net in base $b$ requires $b\geq s-1$ to exist, whereas the following analysis will not. Figure \ref{fig:motivationforsc} illustrates the behavior of the polynomials with our chosen $a$ for different values of $b,m,s$ in our domain. To make the pictures nicer, we include the scaling factor $(b^m-1)^{-1}$, which doesn't modify the sign of the polynomial and still gives an impression of a general pattern. In particular, \ref{fig:motivationforsc}(c) is only an observation for the polynomials themselves and will not make sense for our main results.

\begin{figure}[ht]
\centering
\resizebox{\linewidth}{!}{
\subfloat[$a=\tfrac{b-1}{b},b=2,3,5,7,...,53,m=3,s=3$]{
  \includegraphics[width=65mm]{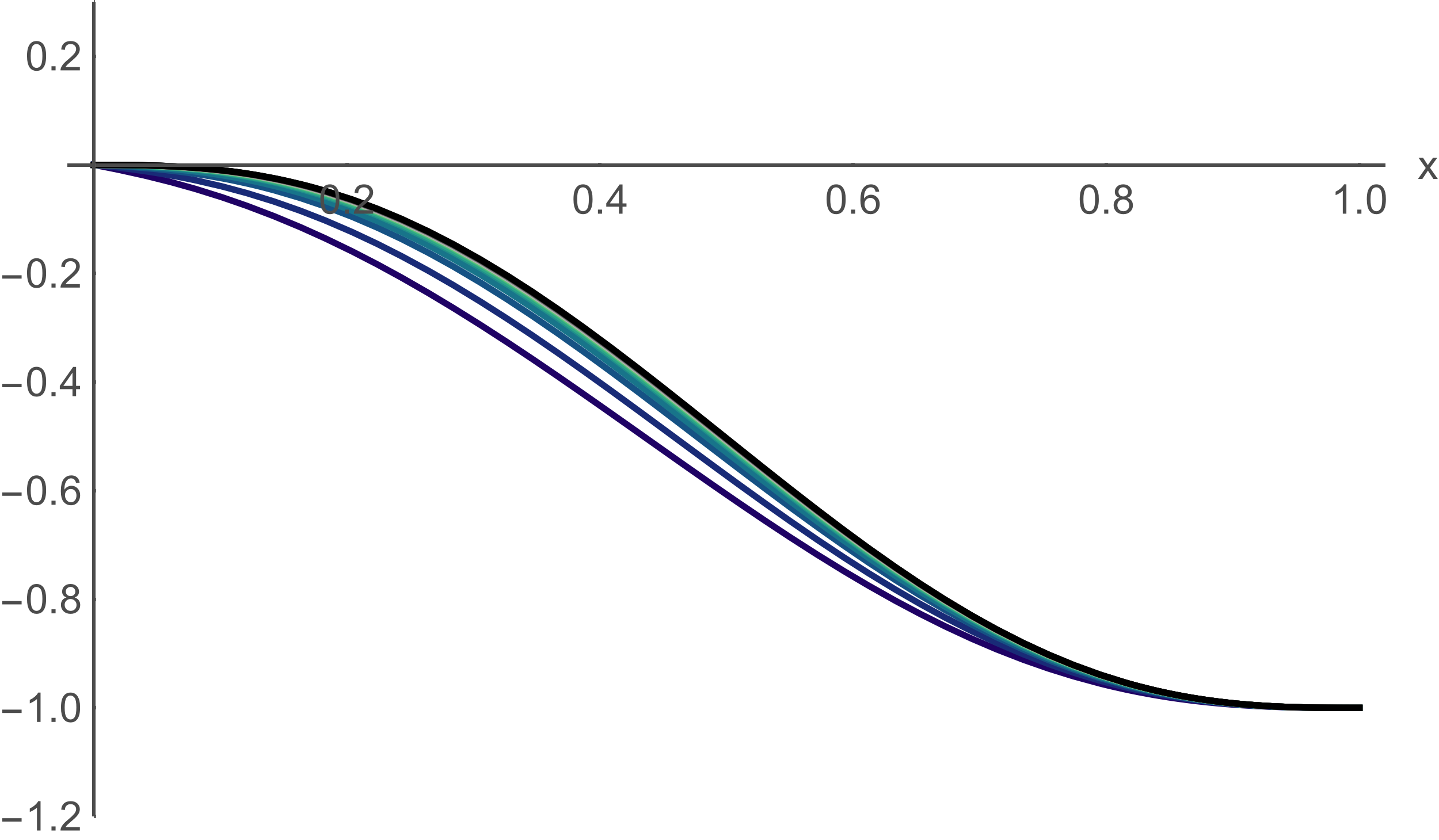}
}
\subfloat[$a=2/3,b=3,m=1,2,...,16,s=3$]{
  \includegraphics[width=65mm]{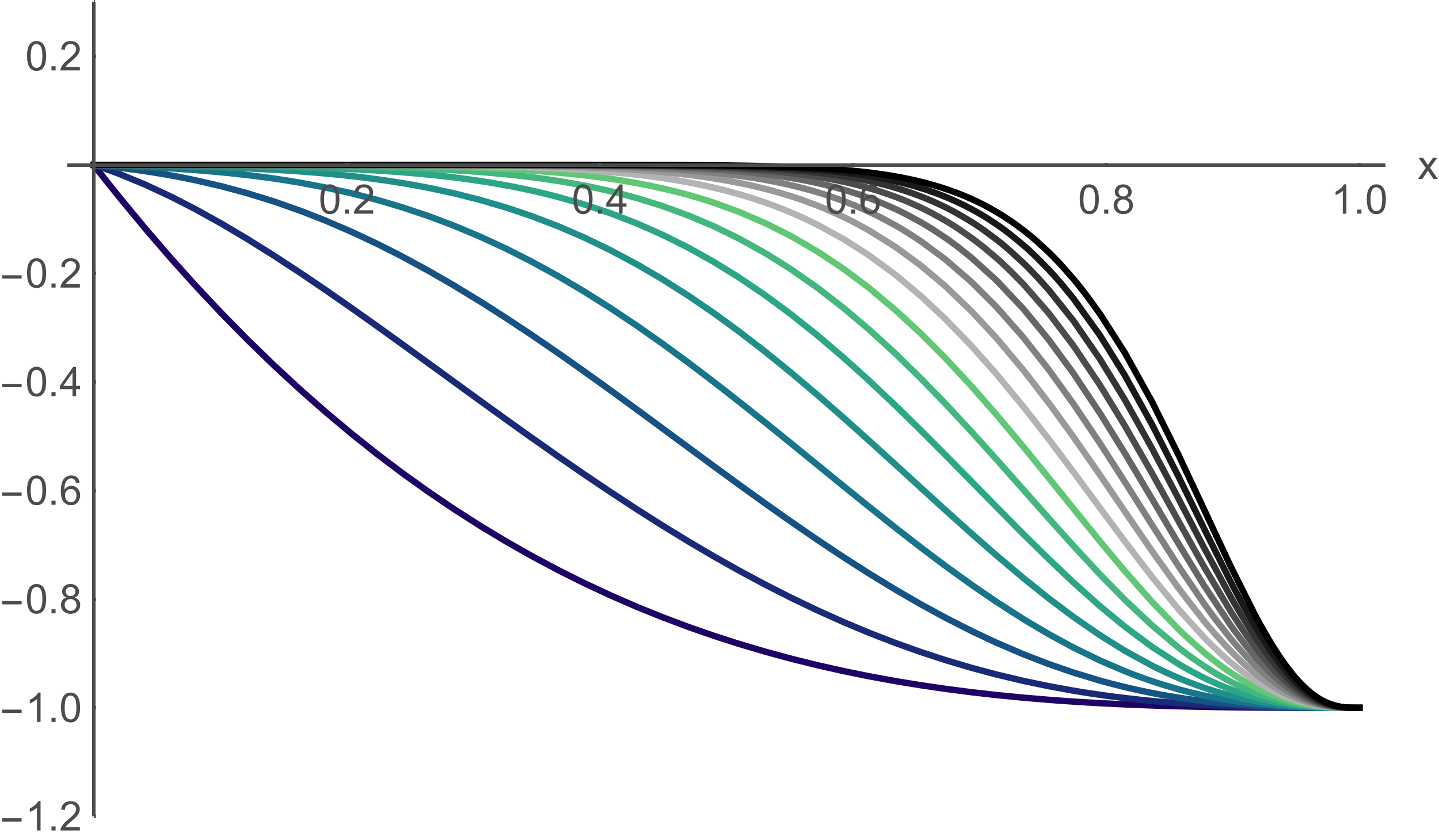}
}
\subfloat[$a=2/3,b=3,m=3,s=1,2,...,16$]{
  \includegraphics[width=65mm]{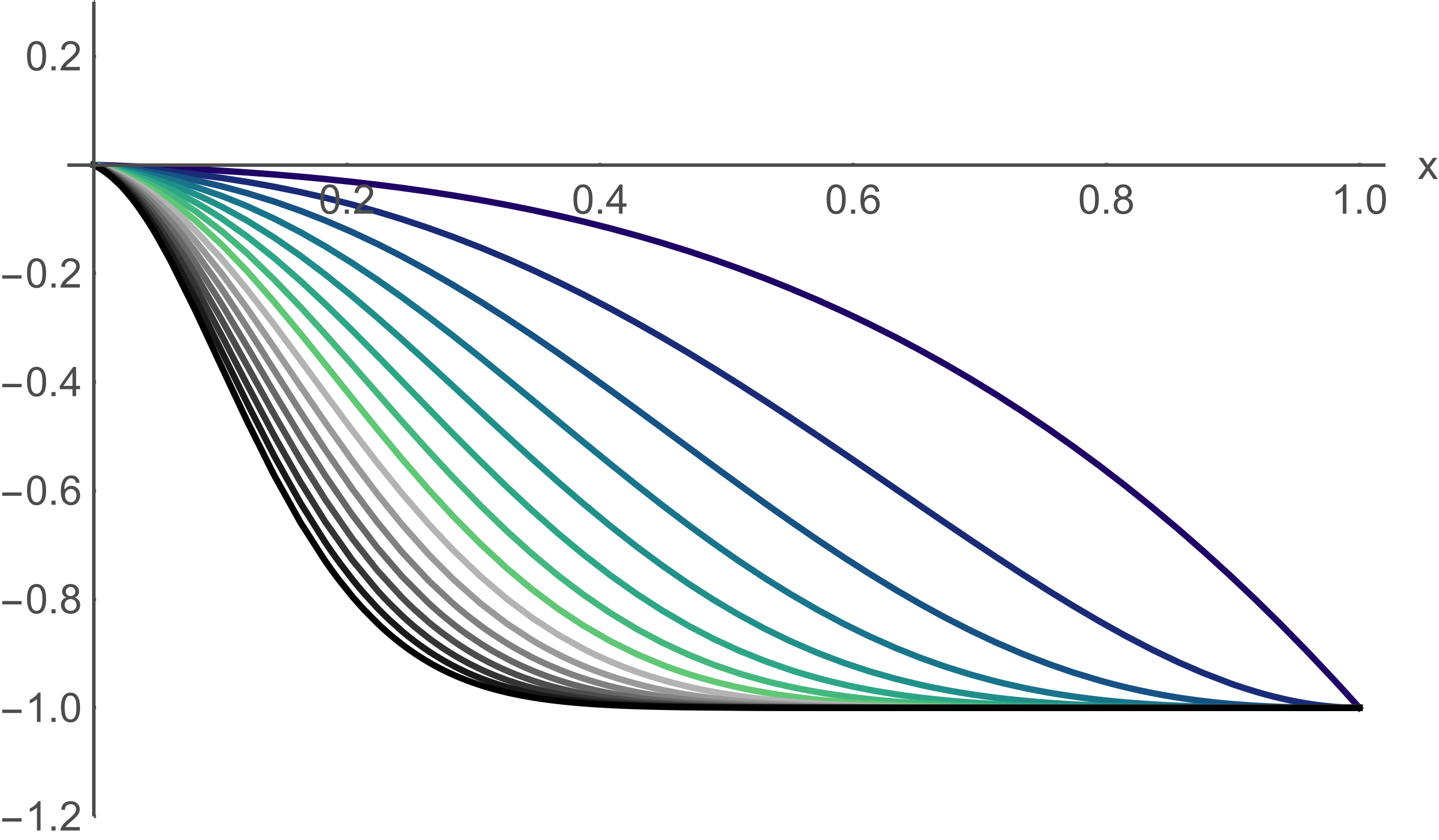}
}
}
\caption{A scaled version of polynomial \eqref{eq:mainpolys} for different values of $b,m,s$ and  $a=\tfrac{b-1}{b}$ where shades of blue represent smaller values of the varying parameter (darkest is smallest) and shades of gray represent larger values (darkest is largest).}
\label{fig:motivationforsc}
\end{figure}

\textit{Creative Telescoping.} We employ the principle of creative telescoping \cite{Zeilberger91} in an attempt to further simplify polynomial \eqref{eq:mainpolys} effectively. To see how this could be possible, we first substitute the formula for $\Psi_b^s(r,c)$ from Definition \ref{def:Psi} into \eqref{eq:mainpolys} and fix $a=(b-1)/b$ to get
\begin{equation}
\label{eq:mainpolys1}
\sum_{k=1}^{m+s-1}\Bigg(\sum_{r=1}^{s}\binom{s}{r}\binom{k-1}{r-1}\frac{b-1}{(-b)^r}\sum_{i=0}^{r-1-c_m(k)}(-b)^i\binom{r-1}{i}\Bigg)(bx)^k.
\end{equation}

We first note that the summands of this triple sum contain holonomic functions in the parameters (roughly speaking, the binomial coefficients and exponential functions satisfy recurrences with polynomial coefficients), whose products and sums are also holonomic \cite[Theorem 2.16]{Koutschan09}. As an additional simplification, the innermost sum can be split into two cases based on $c_m(k)$, both of which collapse into double sums with summands that are holonomic. We can then invoke the function \textsc{CreativeTelescoping} from the package~\textsc{HolonomicFunctions.m} \cite{Koutschan10} which provides telescoping relations for our multiple sums. With some careful manipulation to treat issues of singularities and unnatural boundary values, along with a tedious $\sim 30$ hours of computation, we managed to obtain a recurrence that \eqref{eq:mainpolys1} satisfies. We encourage the reader to refer to the accompanying Mathematica notebook for these computations, which can be downloaded from website mentioned in the introduction. In this way, we are able to assert the following lemma.

\begin{lemma}\label{lem:polycorrect}
For $b,m,s\in\mathbb{N},b\geq 2$, the polynomial $\eqref{eq:mainpolys}$ satisfies the recurrence 
\begin{align*}
&(s+2)(b x-1)\cdot c[s+3]\\
&+\left(m(bx-1)(x-1)+bsx(x-2)+bx(x-3)-s(2x-3)-3 x+5\right)\cdot c[s+2]\\
&-(x-1)(b m x+b s x+b x+m x-2 m+s x-3 s+x-4)\cdot c[s+1]\\
&+(x-1)^2 (m+s+1)\cdot c[s]\\
&=0.
\end{align*}
\end{lemma}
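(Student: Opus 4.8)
The plan is to verify the recurrence in Lemma~\ref{lem:polycorrect} by treating the polynomial \eqref{eq:mainpolys} — call it $P_{b,m,s}(x)$, with coefficients $c[s] := P_{b,m,s}(x)$ viewed as a sequence in $s$ — as a bivariate (in fact multivariate, with $b$ and $m$ as extra parameters) holonomic object and letting the creative telescoping machinery produce and certify the annihilating operator. Concretely, I would first rewrite the innermost sum $\sum_{i=0}^{r-1-c_m(k)}(-b)^i\binom{r-1}{i}$ in \eqref{eq:mainpolys1} in a form that is uniformly holonomic in all parameters; the awkward feature is the cutoff $c_m(k)=\max(k-m,0)$, so I would split the $k$-range at $k=m$ into $1\le k\le m$ (where $c_m(k)=0$ and the inner sum telescopes to a clean closed form via $\sum_{i=0}^{r-1}(-b)^i\binom{r-1}{i}=(1-b)^{r-1}$) and $m< k\le m+s-1$ (where $c_m(k)=k-m$). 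Each resulting piece is then a double or triple sum whose summand is a product of binomial coefficients and exponentials — manifestly holonomic by \cite[Theorem 2.16]{Koutschan09} — so \textsc{HolonomicFunctions.m} can compute a linear recurrence in $s$ (with polynomial coefficients in $s,b,m,x$) annihilating it.

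The key steps, in order: (1) perform the $k$-split and simplify each block to a holonomic multisum; (2) run \textsc{CreativeTelescoping} on each block to get certified recurrences in $s$, being careful about the natural-boundary issues (the summand need not vanish at the nominal summation endpoints, so the telescoped identity must be corrected by explicit boundary terms, and one must check that the supporting hypergeometric-term certificate has no poles in the relevant parameter range); (3) combine the recurrences for the two blocks — take a common left multiple of the two operators, or add the recurrences after clearing the inhomogeneous boundary contributions — to obtain a single operator annihilating $c[s]=P_{b,m,s}(x)$; (4) the raw operator from this process will generically have order larger than $3$, so reduce it: show that the displayed fourth-order (order-$3$ shift) operator right-divides the computed one (a polynomial GCD / division computation in the Ore algebra), or equivalently that the computed recurrence is a consequence of the claimed one; (5) finally pin down the solution uniquely by checking enough initial values $s=0,1,2,\dots$ against the explicit polynomial \eqref{eq:mainpolys}, so that the order-$3$ recurrence plus these initial conditions characterize $P_{b,m,s}(x)$ exactly. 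Steps (2)–(5) are exactly what is described as the ``guess and then prove'' loop in the introduction, and all of it is carried out in the accompanying Mathematica notebook; here one only needs to state that the output has been certified.

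The main obstacle I expect is step (2) together with the order-reduction in step (4). The creative telescoping run is genuinely expensive — the excerpt itself flags roughly $30$ hours of computation — because we carry four symbols ($b,m,s,x$) and the intermediate operators have large polynomial coefficients; moreover the $\max$-induced case split means the ``boundary'' of the summation is itself parameter-dependent, so the standard telescoping identity $\sum (S-1)T = \text{boundary}$ must be applied on each piece with the correct, non-vanishing boundary terms tracked by hand before the pieces can be recombined. Getting a \emph{clean} low-order recurrence rather than the bloated certified one then requires recognizing that the claimed operator divides the computed one; this is a finite but delicate Ore-polynomial division, and verifying it (rather than merely guessing it) is where the real care lies. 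Once that division is established and the handful of initial values $c[0],c[1],c[2]$ are matched, the lemma follows, since the order-$3$ linear recurrence with leading coefficient $(s+2)(bx-1)$ (nonzero for $x\in[0,1/b)$) determines the sequence from its first three terms.
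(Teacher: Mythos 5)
Your proposal follows essentially the same route as the paper: split the sum at $k=m$ to eliminate the $\max$ in $c_m(k)$, certify a (generally higher-order) recurrence for the resulting holonomic multisums via \textsc{CreativeTelescoping} while tracking the non-natural boundary terms, and then reconcile the certified operator with the claimed order-three one by Ore division together with a comparison of initial values---precisely the ``guess and then prove'' procedure the paper describes and delegates to its Mathematica notebook. The only small imprecision is that the number of initial values one must verify is dictated by the order of the certified operator (and the non-vanishing of the relevant leading coefficients), not just $c[0],c[1],c[2]$, but your earlier phrase ``checking enough initial values'' already accommodates this.
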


This recurrence from the above lemma can be solved using the $\textsc{SolveRecurrence}$ command from the $\textsc{Sigma.m}$ \cite{Schneider07} package. We were able to obtain a non-trivial solution with the following form as an output:
\begin{align} \label{recmain}
&\left(1-(bx)^m\right)-(1-(bx)^m)\left(\tfrac{x-1}{b x-1}\right)^s \nonumber \\
&+(bx)^m\left(\tfrac{x^m-1}{x^m}+\tfrac{\Gamma (m+s+1)}{\Gamma (m) \Gamma(s+2)}(1-x)^{s+1}
   \pFq{2}{1}{m+s+1,1}{s+2}{1-x}\right)\\
&+(bx)^m \left(\tfrac{x-1}{b x-1}\right)^s \left(\tfrac{1-(bx)^m}{(b
   x)^{m}}-\tfrac{\Gamma (m+s+1)}{\Gamma (m) \Gamma(s+2)}(1-b x)^{s+1}\pFq{2}{1}{m+s+1,1}{s+2}{1-bx}\right). \nonumber
\end{align}

The Digital Library of Mathematical Functions \cite{DLMF} provides a list of identities that allows us to make nice simplifications, such as using the Beta function in DLMF 8.17.8:

\begin{align*}\pFq{2}{1}{a+b,1}{a+1}{x}=\frac{a}{x^a(1-x)^b} B_x(a,b),\end{align*}

where \[B_x(a,b):=\int_0^xt^{a-1}(1-t)^{b-1}dt.\]

In particular, \[B_1(a,b)=B(a,b)=\frac{\Gamma(a)\Gamma(b)}{\Gamma(a+b)}.\]

Using this with $a=s+1$ and $b=m$ and simplifying the fractions containing $\Gamma(n)=(n-1)!$, our polynomial now becomes:
\begin{align*}
&\left(1-(bx)^m\right)-(1-(bx)^m)\left(\frac{x-1}{b x-1}\right)^s\\
&+(bx)^m \left(\frac{x^m-1}{x^{m}}+\frac{1}{x^{m}}\frac{B_{1-x}(s+1,m)}{B(s+1,m)}\right)\\
&+(bx)^m \left(\frac{x-1}{b x-1}\right)^s \left(\frac{1-(bx)^m}{(b
   x)^{m}}-\frac{1}{(bx)^{m}} \frac{B_{1-bx}(s+1,m)}{B(s+1,m)}\right).
\end{align*}

In the expression above, we observe the presence of the normalized beta function $I_x(a,b)=B_x(a,b)/B(a,b).$ By DLMF 8.17.4, we have the identity
$$I_x(a,b)=1-I_{1-x}(b,a).$$
This now gives:
\begin{align*}
&\left(1-(bx)^m\right)-(1-(bx)^m)\left(\frac{x-1}{b x-1}\right)^s\\
&+(bx)^m \left(\frac{x^m-1}{x^{m}}+\frac{1}{x^{m}} (1-I_x(m,s+1)\right)\\
&+(bx)^m \left(\frac{x-1}{b x-1}\right)^s \left(\frac{1-(b
   x)^{m}}{(bx)^m}-\frac{1}{(bx)^m} I_{1-bx}(s+1,m)\right).
\end{align*}
A final simplification gives:
\[Q_s(b,m,x):=1-b^m I_x(m,s+1)-\left(\frac{1-x}{1-bx}\right)^s I_{1-bx}(s+1,m).\]

Lemma \ref{lem:polycorrect} reveals that $Q_s(b,m,x)$ is a sign equivalent formulation for (\ref{eq:mainpolys1}), so we use it for the remainder of our analysis. The main result of this section (i.e. the fact that $Q_s(b,m,x)$ is not positive on $[0,1)$) is derived using properties found in Section 8 of the DLMF \cite{DLMF}. However, one interesting property that we need cannot be found there, so we conjecture and prove it in the lemma below.

\begin{lemma}\label{lem:isum}
The normalized Beta function can be simplified to a derivative function as follows
$$I_x(a,b)=\frac{(-x)^a}{(a-1)!}\cdot D_x^{a-1}\left(\frac{(1-x)^{a+b-1}-1}{x}\right),$$ where $D_x^j(\cdot)$ is the $j$-th partial derivative of the expression $(\cdot)$ with respect to the variable $x$. The formula holds for $x\in[0,1], a,b\in\mathbb{N}.$ 
\end{lemma}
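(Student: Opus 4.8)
The plan is to prove the identity by showing that both sides, viewed as functions of $x$ on $[0,1]$, satisfy the same first-order characterization. Concretely, I would fix $a,b\in\N$ and argue that the right-hand side $R_{a,b}(x):=\tfrac{(-x)^a}{(a-1)!}D_x^{a-1}\!\left(\tfrac{(1-x)^{a+b-1}-1}{x}\right)$ agrees with $I_x(a,b)$ by verifying (i) the boundary value at $x=0$ and (ii) matching derivatives. Recall from the standard integral representation that $I_x(a,b)=\tfrac{1}{B(a,b)}\int_0^x t^{a-1}(1-t)^{b-1}dt$, so $I_0(a,b)=0$ and $\tfrac{d}{dx}I_x(a,b)=\tfrac{x^{a-1}(1-x)^{b-1}}{B(a,b)}$. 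The strategy is to show the same two facts hold for $R_{a,b}$.

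The key computational step is a Leibniz-rule expansion. Write $g(x):=\tfrac{(1-x)^{a+b-1}-1}{x}$; note $g$ is a polynomial of degree $a+b-2$ (the numerator vanishes at $x=0$ to first order), so $D_x^{a-1}g$ is a genuine polynomial. I would compute $R_{a,b}(x)$ near $x=0$ by expanding $g(x)=\sum_{j\geq 1}\binom{a+b-1}{j}(-1)^j x^{j-1}$, so that $D_x^{a-1}g(x)$ picks out terms $x^{j-1-(a-1)}=x^{j-a}$; multiplying by $(-x)^a/(a-1)!$ and evaluating at $x=0$ selects $j=a$, giving $R_{a,b}(0)=\tfrac{(-1)^a}{(a-1)!}\binom{a+b-1}{a}(-1)^a (a-1)!\cdot\tfrac{1}{1}$ — which one checks is actually $0$ once the correct power bookkeeping is done (the $j=a$ term contributes a factor $x^{a-a}=1$ but the prefactor $(-x)^a$ kills it), so $R_{a,b}(0)=0$ as required. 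More robustly, since $g$ is a polynomial of degree $a+b-2$, $D_x^{a-1}g$ has degree $b-1$, and multiplication by $x^a$ gives something vanishing to order $a\geq 1$ at the origin, so $R_{a,b}(0)=0$ is immediate.

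For the derivative, I would differentiate $R_{a,b}$ and show $R_{a,b}'(x)=\tfrac{x^{a-1}(1-x)^{b-1}}{B(a,b)}=\tfrac{(a+b-1)!}{(a-1)!(b-1)!}x^{a-1}(1-x)^{b-1}$. Using the product rule on $(-x)^a$ and $D_x^{a-1}g$, one gets a combination of $D_x^{a-1}g$ and $D_x^a g$; the cleanest route is to observe $D_x^a\big(x g(x)\big)=D_x^a\big((1-x)^{a+b-1}-1\big)=(-1)^a\tfrac{(a+b-1)!}{(b-1)!}(1-x)^{b-1}$, then expand the left side by Leibniz as $x\,D_x^a g + a\,D_x^{a-1}g$. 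Solving this relation for the combination that appears in $R_{a,b}'$ and simplifying the factorials yields exactly the target derivative. The main obstacle I anticipate is the careful factorial/sign bookkeeping in this Leibniz manipulation — there are several places where an off-by-one in the differentiation order or a sign in $(-x)^a$ versus $(-1)^a x^a$ can derail the cancellation — so I would organize the computation around the single clean identity $D_x^a(xg(x))=x D_x^a g(x)+a D_x^{a-1}g(x)$ and substitute at the end, rather than expanding everything at once. Finally, since $R_{a,b}$ and $I_{\cdot}(a,b)$ are both polynomials (indeed $I_x(a,b)$ is a polynomial in $x$ when $a,b\in\N$) agreeing at $x=0$ with equal derivatives everywhere, they coincide on all of $[0,1]$, and by polynomial identity on $\R$.
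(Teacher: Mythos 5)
Your proof is correct, but it takes a genuinely different route from the paper. The paper proves the identity constructively: it starts from $I_x(a,b)=\frac{1}{B(a,b)}\int_0^x t^{a-1}(1-t)^{b-1}\,dt$, expands $(1-t)^{b-1}$ binomially, integrates term by term, reindexes, and then recognizes the resulting finite sum as the $(a-1)$-st derivative of the polynomial $g(x)=\frac{(1-x)^{a+b-1}-1}{x}$ multiplied by $\frac{(-x)^a}{(a-1)!}$. You instead verify that both sides solve the same initial value problem: both vanish at $x=0$ (for the right-hand side because $g$ is a polynomial of degree $a+b-2$, so $x^a D_x^{a-1}g$ vanishes to order $a\geq 1$ at the origin), and both have derivative $\frac{x^{a-1}(1-x)^{b-1}}{B(a,b)}$, the latter following from the clean Leibniz identity $D_x^a\bigl(xg(x)\bigr)=x\,D_x^a g(x)+a\,D_x^{a-1}g(x)$ together with $D_x^a\bigl((1-x)^{a+b-1}-1\bigr)=(-1)^a\frac{(a+b-1)!}{(b-1)!}(1-x)^{b-1}$; since both sides are polynomials this determines them uniquely. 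I have checked the sign and factorial bookkeeping in your derivative computation and it closes correctly. Your approach buys a verification that avoids the series manipulation and the index shift $i\mapsto i-a$ entirely, organizing everything around one Leibniz identity; the paper's approach buys a derivation that explains where the formula comes from rather than merely confirming it. One cosmetic remark: the first half of your boundary-value paragraph (the attempted term-by-term evaluation at $x=0$) is muddled and unnecessary --- the ``more robustly'' degree argument that follows it is the one to keep. Both your argument and the paper's implicitly require $a\geq 1$ (so that $(a-1)!$ and the vanishing order make sense), which is satisfied in every application in the paper.
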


\begin{proof}
\begin{align*}
I_x(a,b)&=\frac{1}{B(a,b)}\cdot \int_0^x t^{a-1}\cdot (1-t)^{b-1} dt\\
&=\frac{(a+b-1)!}{(a-1)!\cdot(b-1)!}\cdot \int_0^x\sum\limits_{i=0}^{b-1}\binom{b-1}{i}\cdot (-1)^i\cdot t^{i+a-1} dt\\
&=\frac{1}{(a-1)!}\cdot \sum\limits_{i=0}^{b-1}\frac{(a+b-1)!}{(b-1-i)!\cdot i!}\cdot (-1)^i\cdot \frac{x^{i+a}}{i+a}\\
&=\frac{x^a}{(a-1)!}\cdot \sum\limits_{i=a}^{a+b-1}\binom{a+b-1}{i}\cdot\frac{(i-1)!}{(i-a)!}\cdot(-x)^{i-a},
\end{align*}
which we can write as a derivative
\begin{align*}
I_x(a,b)&=\frac{(-1)^{a-1}\cdot x^a}{(a-1)!}\cdot D_x^{a-1}\left(\sum\limits_{i=1}^{a+b-1}\binom{a+b-1}{i}\cdot(-x)^{i-1}\right)\\
&=-\frac{(-x)^a}{(a-1)!}\cdot D_x^{a-1}\left(\frac{(1-x)^{a+b-1}-1}{-x}\right)\\
&=\frac{(-x)^a}{(a-1)!}\cdot D_x^{a-1}\left(\frac{(1-x)^{a+b-1}-1}{x}\right).
\end{align*}
\end{proof}

We now proceed to show the main result of this section.

\begin{theorem}\label{thm:maininequality}
For $b,m,s\in\mathbb{N}, b\geq 2, 0\leq x\leq 1$, \[Q_s(b,m,x)\leq 0,\]
where
\[Q_s(b,m,x):=1-b^m I_x(m,s+1)-\left(\frac{1-x}{1-bx}\right)^s I_{1-bx}(s+1,m).\]
\end{theorem}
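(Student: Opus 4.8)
The plan is to show that $Q_s(b,m,x)\le 0$ for $x\in[0,1]$ by treating the two endpoint cases separately and then handling the interior via a careful analysis of the two incomplete-Beta terms. First I would dispose of the degenerate cases: at $x=0$ we have $I_0(m,s+1)=0$ and $I_1(s+1,m)=1$, so $Q_s(b,m,0)=1-1=0$; at $x=1$ we have $I_1(m,s+1)=1$, so the first two terms give $1-b^m$, and since $\left(\tfrac{1-x}{1-bx}\right)^s\to 0$ as $x\to1$ (for $b\ge2$ the factor $(1-x)^s$ kills it even though $1-bx<0$), we get $Q_s(b,m,1)=1-b^m\le 0$. The case $b=1$-like behavior is excluded since $b\ge 2$; note also that for $x\in(1/b,1]$ the quantity $1-bx$ is negative, so one must be slightly careful that $\left(\tfrac{1-x}{1-bx}\right)^s$ and $I_{1-bx}(s+1,m)$ are still well-defined real numbers — here $1-bx\in[1-b,0)$, which is outside $[0,1]$, so $I_{1-bx}$ should be interpreted through its polynomial/derivative representation from Lemma~\ref{lem:isum} (which is what makes that lemma essential), or one argues that on $(1/b,1]$ the dominant term $-b^m I_x(m,s+1)$ is already large enough in magnitude.

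The heart of the argument is the interval $x\in[0,1/b]$, where all of $x$, $1-x$, $1-bx$ lie in $[0,1]$ and every $I_\bullet$ is a genuine probability. Here I would rewrite $Q_s$ using the complementation identity $I_x(a,b)=1-I_{1-x}(b,a)$ to bring both incomplete-Beta functions to a common "direction," and then use the integral representation $I_x(m,s+1)=\tfrac{1}{B(m,s+1)}\int_0^x t^{m-1}(1-t)^s\,dt$ together with the derivative formula from Lemma~\ref{lem:isum} to express $Q_s$ as a single quantity whose sign can be read off. A natural route: show $Q_s(b,m,x)\le 0$ is equivalent to
\[
b^m I_x(m,s+1) + \left(\tfrac{1-x}{1-bx}\right)^s I_{1-bx}(s+1,m)\ \ge\ 1,
\]
and prove this by differentiating the left side in $x$ (or in $b$, or inducting on $s$ via the recurrence structure already exploited in Lemma~\ref{lem:polycorrect}) and checking it starts at the right value. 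The factor $b^m$ in front of $I_x(m,s+1)$ is what gives the inequality its slack: even for moderately small $x<1/b$, $I_x(m,s+1)$ is of order $(bx)^m$ times a bounded factor, so $b^m I_x(m,s+1)$ is of order $x^m$ times a constant, and one needs the second term to cover the gap near $x=0$.

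I expect the main obstacle to be controlling the second term $\left(\tfrac{1-x}{1-bx}\right)^s I_{1-bx}(s+1,m)$ uniformly in all three parameters $b,m,s$ simultaneously, particularly near $x=0$ where $\tfrac{1-x}{1-bx}<1$ and $I_{1-bx}(s+1,m)<1$, so the whole term is a product of two quantities each less than $1$ — yet it must combine with $b^m I_x(m,s+1)$ (which is near $0$) to exceed $1$. The resolution I anticipate is a monotonicity/convexity argument: fix $b,m$ and show the left side of the displayed inequality, as a function of $x\in[0,1/b]$, is either monotone or has no interior minimum below $1$, reducing everything to the two endpoints $x=0$ (value exactly $1$) and $x=1/b$ (where $1-bx=0$, $I_0(s+1,m)=0$, so the second term vanishes and one needs $b^m I_{1/b}(m,s+1)\ge 1$, i.e.\ $I_{1/b}(m,s+1)\ge b^{-m}$, which should follow from $I_{1/b}(m,s+1)\ge \int_0^{1/b} t^{m-1}(1-t)^s/B(m,s+1)\,dt$ and a crude lower bound on the integrand, or from the probabilistic interpretation as a binomial tail). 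Pushing the endpoint analysis past $x=1/b$ up to $x=1$ then uses the first paragraph's observation that $-b^m I_x(m,s+1)$ only gets more negative while the second term stays bounded, completing the proof.
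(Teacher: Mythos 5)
There is a genuine gap. Your endpoint computations at $x=0$ and $x=1$ are fine, but the entire interior of the interval — which is the whole content of the theorem — is deferred to the unproven claim that $b^m I_x(m,s+1)+\left(\tfrac{1-x}{1-bx}\right)^s I_{1-bx}(s+1,m)$ ``is either monotone or has no interior minimum below $1$.'' Since this function equals exactly $1$ at $x=0$, that claim \emph{is} the theorem, and no mechanism is offered to establish it; monotonicity in $x$ is not obvious and convexity is not claimed with any justification. The secondary steps you lean on are also shakier than you suggest: at $x=1/b$ the second term is an indeterminate $\infty\cdot 0$ (the prefactor $\left(\tfrac{1-x}{1-bx}\right)^s$ blows up), so ``$I_0(s+1,m)=0$, hence the term vanishes'' needs the rate $I_y(s+1,m)=O(y^{s+1})$; the ``crude lower bound on the integrand'' you propose for $b^m I_{1/b}(m,s+1)\ge 1$ actually gives only $\binom{m+s}{s}(1-1/b)^s$, which is less than $1$ already for $b=2$, $m=1$, $s\ge 3$ (the binomial-tail interpretation does work, since the event that the first $m$ of $m+s$ Bernoulli$(1/b)$ trials all succeed has probability $b^{-m}$, but you only gesture at it); and on $(1/b,1)$ the analytically continued term $-\left(\tfrac{1-x}{1-bx}\right)^s I_{1-bx}(s+1,m)$ turns out to be \emph{positive}, i.e.\ it works against the inequality, so ``the dominant term is already large enough'' is not a proof.

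The paper avoids all pointwise analysis in $x$. It inducts on $s$: the base case is the explicit factorization $Q_1(b,m,x)=(1-b)x\sum_{i=0}^{m-1}(bx)^i\le 0$, and the inductive step computes the difference $\Delta_s=Q_{s-1}-Q_s$ in closed form, using the integral representation of $I_x(m,s+1)$ for one piece and the derivative formula of Lemma~\ref{lem:isum} for the other, arriving at
\[
\Delta_s=x(b-1)(1-x)^s\sum_{i=0}^{m-1}\binom{i+s-1}{s-1}(bx)^i\ \ge\ 0,
\]
which is manifestly nonnegative term by term. You correctly sensed that Lemma~\ref{lem:isum} and an induction on $s$ are the right ingredients, but without an explicit, sign-definite closed form for either $Q_s$ or a difference of consecutive $Q_s$'s, the argument does not close.
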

\begin{proof}
We use an inductive style proof on $s$. First, we verify for $s=1$ that
\[Q_1(b,m,x)=(1-b)x\cdot\frac{(bx)^m-1}{bx-1}=(1-b)x\cdot\sum\limits_{i=0}^{m-1}(bx)^i\leq 0.\] Induction tells us that it is enough to show that  $Q_{s-1}\leq 0 \Rightarrow Q_{s} \leq 0$, but instead, we choose to show \[\Delta_s:= Q_{s-1}-Q_s\geq 0,\] which would imply that the polynomials are decreasing as a function of $s$, thereby giving us our result. We first observe that $\Delta_s$ can be separated into two parts, $\Delta_{s}^{(1)}$ and $\Delta_{s}^{(2)}$ (because the 1's cancel in the difference), and we simplify each of the two parts separately.

\underline{Part 1:}
\[\Delta_{s}^{(1)}:=b^m(I_x(m,s+1)-I_x(m,s))\]
To simplify this, we look at the integral representation of the beta function.
\begin{align*}
\Delta_{s}^{(1)}&=\tfrac{b^m}{B(m,s+1)}\cdot\int_0^xt^{m-1}(1-t)^s dt-\tfrac{b^m}{B(m,s)}\cdot\int_0^xt^{m-1}(1-t)^{s-1}dt\\
&=b^m\cdot\tfrac{(m+s-1)!}{(m-1)!\cdot(s-1)!}\cdot\int_0^x\left(\tfrac{m+s}{s}\cdot t^{m-1}\cdot (1-t)^s-t^{m-1}\cdot (1-t)^{s-1}\right)dt\\
&=b^m\cdot \tfrac{(m+s-1)!}{(m-1)!\cdot s!}\cdot\int_0^x\left(mt^{m-1}\cdot (1-t)^s-t^m\cdot s(1-t)^{s-1}\right)dt\\
&=b^m\cdot \binom{m+s-1}{s}\cdot\int_0^x\left(t^m\cdot (1-t)^s\right)'dt\\
&=b^m\cdot \binom{m+s-1}{s}\cdot x^m\cdot (1-x)^s
\end{align*}

\underline{Part 2:} We write $\Delta_{s}^{(2)}$ as the difference $\delta_s-\delta_{s-1}$ where
\[\delta_s:=(1-x)^s\cdot\frac{I_{1-bx}(s+1,m)}{(1-bx)^s}.\]
For this simplification, we take advantage of Lemma \ref{lem:isum}. Upon substitution, the derivative simplifies nicely to a symbolic sum in terms of only $bx$ (and not $1-bx$ for example). We remark that in this case, the use of the identity DLMF 8.17.4 is less elegant. Instead, we simplify as follows:
\begin{align*}
\delta_s
&=\frac{(1-x)^s\cdot(-(1-bx))^{s+1}}{s!\cdot (1-bx)^{s}\cdot (-b)^{s}}\cdot D_x^{s}\left(\frac{(1-(1-bx))^{s+m}-1}{1-bx}\right)\\
&=\frac{(1-x)^s\cdot(1-bx)}{s!\cdot b^{s}}\cdot D_x^{s}\left(\frac{(bx)^{s+m}-1}{bx-1}\right)\\
&=\frac{(1-x)^s\cdot(1-bx)}{s!\cdot b^{s}}\cdot D_x^{s}\left(\sum\limits_{i=0}^{s+m-1}(bx)^i\right)\\
&=\frac{(1-x)^s\cdot(1-bx)}{s!\cdot b^{s}}\cdot \sum\limits_{i=s}^{s+m-1}\frac{i!}{(i-s)!}
\cdot b^i\cdot x^{i-s}\\
&=\frac{(1-x)^s\cdot(1-bx)}{s!\cdot b^{s}}\cdot \sum\limits_{i=0}^{m-1}\frac{(i+s)!}{i!}\cdot b^{i+s}\cdot x^i\\
&=(1-x)^s\cdot(1-bx)\cdot \sum\limits_{i=0}^{m-1}\binom{i+s}{s}\cdot(bx)^i.
\end{align*}
Combining, $\Delta_s=\Delta_s^{(1)}+\Delta_s^{(2)}= \Delta_s^{(1)}+\delta_s-\delta_{s-1}=(1-x)^s\cdot d_s-\delta_{s-1}$ with
\begin{align*}
d_s&=\binom{m+s-1}{s}\cdot(bx)^m+(1-bx)\cdot \sum\limits_{i=0}^{m-1}\binom{i+s}{s}\cdot(bx)^i\\
&=\binom{m+s-1}{s}\cdot(bx)^m+ \sum\limits_{i=0}^{m-1}\binom{i+s}{s}\cdot\left((bx)^{i}-(bx)^{i+1}\right)\\
&=\binom{m+s-1}{s}\cdot(bx)^m -\sum\limits_{i=1}^{m} \binom{i+s-1}{s}\cdot(bx)^{i} +\sum\limits_{i=0}^{m-1}\binom{i+s}{s}\cdot(bx)^{i}\\
&=\sum\limits_{i=0}^{m-1}\binom{i+s-1}{s-1}\cdot(bx)^{i},
\end{align*}
gives us the nice formula
\[\Delta_s=(1-x)^s \cdot d_s-\delta_{s-1}
=x\cdot(b-1)\cdot(1-x)^{s}\cdot\sum\limits_{i=0}^{m-1}\binom{i+s-1}{s-1}\cdot(bx)^{i},\]
and this is clearly positive for our assumed values, as desired.
\end{proof}

Finally, we can conclude with the main result of the paper.

\begin{theorem}\label{thm:main}
Let $\tilde P_n$ be a scrambled $(0,m,s)$-net in base $b$ and let $f\in L^2([0,1)^s)$ be a function whose base $b$ Walsh series decomposition satisfies \[\sigma_\BFk^2(f)=\left(\frac{b-1}{b}\right)^r (bx)^k \alpha_f,\] where $\alpha_f$ is a positive constant that depends on $f$, $x\in[0,1/b)$,  $k=k_1+\cdots+k_s$ and $r$ is the number of non-zero coordinates of $\BFk\in\N^s$. Then \[\cov(f(\BFU_I),f(\BFU_J))\leq 0 \text{ and } \var(\hat I_n(f))\leq \var(\hat {I}_{MC,n}(f)).\]
\end{theorem}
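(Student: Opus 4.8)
The plan is to assemble the main theorem directly from the machinery developed in Sections 2--5, essentially as a corollary of Theorem~\ref{thm:maininequality}. First I would invoke the variance decomposition \eqref{eq:variance} together with the observation immediately following it, which tells us that $\var(\hat I_n(f))\le\var(\hat I_{MC,n}(f))$ if and only if $\cov(f(\BFU_I),f(\BFU_J))\le 0$; thus it suffices to prove the covariance is non-positive. Next I would apply Lemma~\ref{lem:covpoly} with the specific choice $a=(b-1)/b$, which is exactly the decay condition $\sigma_\BFk^2(f)=\big(\tfrac{b-1}{b}\big)^r(bx)^k\alpha_f$ appearing in the hypothesis: this rewrites $\tfrac{b^m-1}{\alpha_f}\cov(f(\BFU_I),f(\BFU_J))$ as the explicit polynomial \eqref{eq:mainpolys}, equivalently \eqref{eq:mainpolys1} once the formula for $\Psi_b^s(r,c)$ is substituted.

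The second step is to pass from the polynomial \eqref{eq:mainpolys1} to the closed form $Q_s(b,m,x)$. By Lemma~\ref{lem:polycorrect}, the polynomial satisfies the displayed fourth-order recurrence in $s$, and by the symbolic computation recounted in Section~\ref{sec:symcomp} (solving the recurrence with \textsc{Sigma.m} and simplifying via the DLMF beta-function identities), $Q_s(b,m,x)$ is a sign-equivalent formulation of \eqref{eq:mainpolys1} --- meaning the two agree up to multiplication by a positive quantity, so in particular they have the same sign on $[0,1)$. Consequently $\cov(f(\BFU_I),f(\BFU_J))$ has the same sign as $Q_s(b,m,x)$ on the relevant range of $x$. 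Then I would simply quote Theorem~\ref{thm:maininequality}: for all $b\ge2$, $m,s\in\N\setminus\{0\}$ and $0\le x\le1$ we have $Q_s(b,m,x)\le0$; since the hypothesis gives $x\in[0,1/b)\subseteq[0,1]$, this applies. Combining, $\cov(f(\BFU_I),f(\BFU_J))\le0$, and then the ``if and only if'' from the preliminaries yields $\var(\hat I_n(f))\le\var(\hat I_{MC,n}(f))$.

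One point requiring a little care is the bookkeeping of the positive scaling factors: Lemma~\ref{lem:covpoly} multiplies the covariance by $(b^m-1)/\alpha_f$, which is positive since $b\ge2$, $m\ge1$, and $\alpha_f>0$; and the passage to $Q_s(b,m,x)$ in Section~\ref{sec:symcomp} only ever multiplies or divides by positive quantities (powers of $b$, factorials, beta-function normalizations), so the chain ``$\cov\le0 \iff \eqref{eq:mainpolys}\le0 \iff \eqref{eq:mainpolys1}\le0 \iff Q_s(b,m,x)\le0$'' is valid. A second small caveat: the hypothesis of Lemma~\ref{lem:covpoly} only constrains $\sigma_\BFk^2(f)$ for $\BFk$ with $k\le m+r-1$, but this is harmless because $\hat\psi_\BFk=0$ (equivalently the gain coefficient vanishes) for $k\le m$ and, more to the point, $\Psi_b^s(r,c)$ only contributes for the finitely many $k$ in the range $1\le k\le m+s-1$ that actually appear in \eqref{eq:mainpolys}, so the decay condition is imposed exactly where it is used.

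The main obstacle here is not in this final assembly --- which is essentially an exercise in chaining together already-proved statements --- but in the fact that every link in the chain relies on the substantial earlier work: the Walsh decomposition of the joint pdf (Theorem~\ref{thm:finaldecomp}), the reduction to a polynomial (Lemma~\ref{lem:covpoly}), the creative-telescoping-produced recurrence (Lemma~\ref{lem:polycorrect}) together with its machine-verified derivation, and the inductive proof that $Q_s(b,m,x)\le0$ (Theorem~\ref{thm:maininequality}). If I were proving the whole arc from scratch rather than citing these, the genuinely hard step would be establishing Lemma~\ref{lem:polycorrect} --- recognizing that the triple sum is holonomic, setting up the creative telescoping correctly (handling the singularities and the case split on $c_m(k)$), and then reconciling the high-order recurrence output by \textsc{HolonomicFunctions.m} with the lower-order guessed recurrence. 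Given that all of this is in hand, the proof of Theorem~\ref{thm:main} itself is short.
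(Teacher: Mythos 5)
Your proposal is correct and follows essentially the same route as the paper's own proof: invoke Lemma~\ref{lem:covpoly} with $a=(b-1)/b$, use the sign equivalence of $Q_s(b,m,x)$ with the polynomial \eqref{eq:mainpolys} established via Lemma~\ref{lem:polycorrect}, apply Theorem~\ref{thm:maininequality}, and conclude with the variance decomposition \eqref{eq:variance}. The extra bookkeeping you supply on the positive scaling factors and the range of $k$ is consistent with, and slightly more explicit than, what the paper writes.
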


\begin{proof}
From Lemma~\ref{lem:polycorrect}, we deduce that $Q_s(b,m,x)$ gives us the polynomial that is sign equivalent to \eqref{eq:mainpolys} in Lemma~\ref{lem:covpoly} for all parameters in the desired ranges. By Theorem~\ref{thm:maininequality}, $Q_s(b,m,x)\leq 0.$ Thus, we conclude that $\cov(f(\BFU_I),f(\BFU_J))\leq 0.$ Then $\eqref{eq:variance}$ implies $\var(\hat I_n(f))\leq \var(\hat {I}_{MC,n}(f))$.
\end{proof}

\begin{corollary}
Let $\tilde P_n$ be a scrambled $(0,m,s)$-net in base $b$ and suppose $\mu$ is a positive Borel probability measure on $L^2([0,1)^s)$ that admits a mean function $h$.  If the base $b$ Walsh series decomposition of $h$ satisfies \[\sigma_\BFk^2(h)=\left(\frac{b-1}{b}\right)^r (bx)^k \alpha_h,\] where is $\alpha_h$ positive constant that depends on $h$, $x\in[0,1/b)$,  $k=k_1+\cdots+k_s$ and $r$ is the number of non-zero coordinates of $\BFk\in\N^s$. Then \[
\int_{f\in L^2([0,1)^s)} \cov(f(\BFU_I),f(\BFU_J)) d\mu(f)\leq 0.
\]
\end{corollary}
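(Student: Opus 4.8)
The plan is to reduce the corollary directly to the two results it is built on, namely Lemma~\ref{lem:expectedvalue} and Theorem~\ref{thm:main}. First I would invoke Lemma~\ref{lem:expectedvalue}, which applies because $\mu$ is assumed to be a Borel probability measure admitting a mean function $h$; this identifies the average covariance with the covariance of the mean function:
\[
\int_{f\in L^2([0,1)^s)}\cov(f(\BFU_I),f(\BFU_J))\,d\mu(f)=\cov(h(\BFU_I),h(\BFU_J)).
\]
So the problem collapses to showing $\cov(h(\BFU_I),h(\BFU_J))\leq 0$ for the single function $h$.

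Next I would observe that $h$ satisfies exactly the Walsh-coefficient decay hypothesis of Theorem~\ref{thm:main}, with $\alpha_h>0$ in place of $\alpha_f$ (positivity of $\alpha_h$ is exactly what the assumption that $\mu$ is a \emph{positive} Borel probability measure is there to guarantee, so that the mean function has genuinely nonnegative $\sigma_\BFk^2(h)$). Applying Theorem~\ref{thm:main} to $h$ gives $\cov(h(\BFU_I),h(\BFU_J))\leq 0$ directly. Chaining the two displays yields the claimed inequality.

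There is essentially no obstacle here — the corollary is a formal consequence of the two cited results, and the only thing to be careful about is checking that the hypotheses match up cleanly: that Lemma~\ref{lem:expectedvalue} is stated for a general scrambled $(t,m,s)$-net (it is), while Theorem~\ref{thm:main} needs $t=0$, so the corollary must (and does) assume $\tilde P_n$ is a scrambled $(0,m,s)$-net; and that the decay exponent conventions ($r=\supp(\BFk)$, $k=k_1+\cdots+k_s$) in the corollary are literally those of Theorem~\ref{thm:main}. If one wanted to be slightly more self-contained one could instead combine Lemma~\ref{lem:expectedvalue} with Lemma~\ref{lem:covpoly} and Theorem~\ref{thm:maininequality} to re-derive the sign, but routing through Theorem~\ref{thm:main} is the cleanest path and I would write the proof in two lines accordingly.
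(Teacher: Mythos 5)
Your proposal is correct and follows exactly the paper's own proof: apply Lemma~\ref{lem:expectedvalue} to identify the averaged covariance with $\cov(h(\BFU_I),h(\BFU_J))$, then apply Theorem~\ref{thm:main} to the mean function $h$. The extra remarks about matching hypotheses ($t=0$, the decay convention) are sensible but not needed beyond what the paper records.
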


\begin{proof}
Applying Lemma~\ref{lem:expectedvalue} and the previous theorem gives
\[
\int_{f\in L^2([0,1]^s)} \cov(f(\BFU_I),f(\BFU_J)) d\mu(f) = \cov(h(\BFU_I),h(\BFU_J))\leq 0.\qedhere
\]
\end{proof}

\section{Conclusions and Future Work}

In this paper we computed the base $b$ Walsh series decomposition of the joint pdf of a scrambled $(0,m,s)$-net in base $b$. This allowed us to give a formula for $\cov(f(\BFU_I),f(\BFU_J))$ in terms of the function's Walsh coefficients. Using symbolic computation we were able to show that with a reasonable assumption on the base $b$ Walsh coefficients of $f$, the covariance term will be negative. This work extends the list of functions for which we know that an estimator based on a scrambled $(0,m,s)$-net in base $b$ will do no worse than Monte Carlo. This paper also shows that symbolic computation can be applied to QMC integration problems. In future work, we would like to find functions that satisfy the decay condition of Theorem \ref{thm:main}, or find a practical measure on $L^2([0,1)^s)$ whose mean function satisfies the same condition.

Figures \ref{fig:conjecture} and \ref{fig:conjecture2} give some insight as to why a more generalized result could not be so easily proved. The nice patterns that we had observed in Figure \ref{fig:motivationforsc} with $a=\tfrac{b-1}{b}$ are not as regular here, with polynomials with $a$ closer to 1 exhibiting more erratic behavior in the interval $[0,1)$.

\begin{figure}[ht]
\centering
\includegraphics[width=65mm]{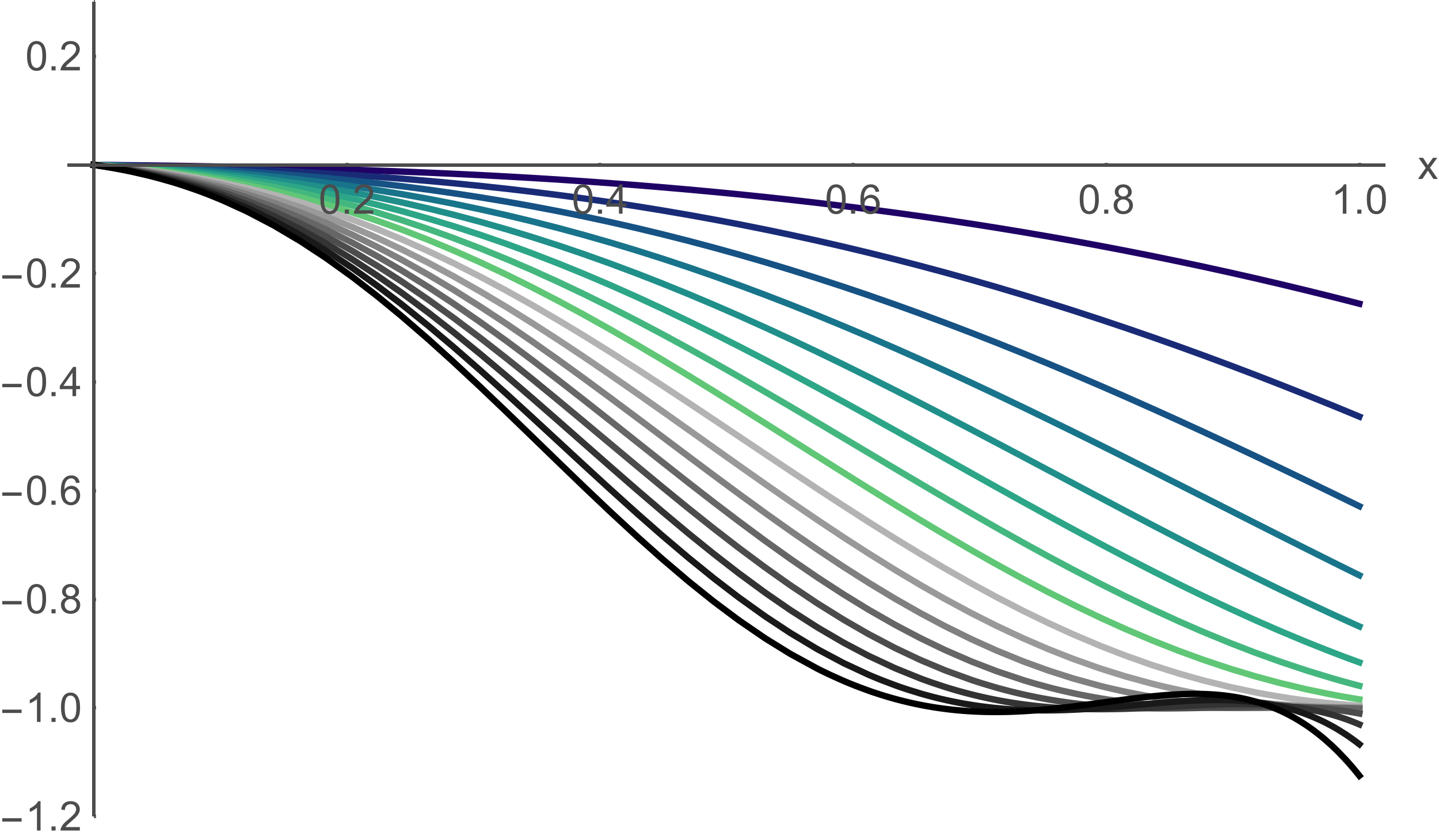}
\caption{A scaled version of polynomial \eqref{eq:mainpolys} for $b=3,m=3,s=3$, $a=1/16,...,1$ with shades of blue representing smaller values of $a$ (darkest is smallest) and shades of gray representing larger values of $a$ (darkest is largest).}
\label{fig:conjecture}
\end{figure}

\begin{figure}[ht]
\centering
\resizebox{\linewidth}{!}{
\subfloat[$a=1,b=2,3,5,7,...,53,m=3,s=3$]{
  \includegraphics[width=65mm]{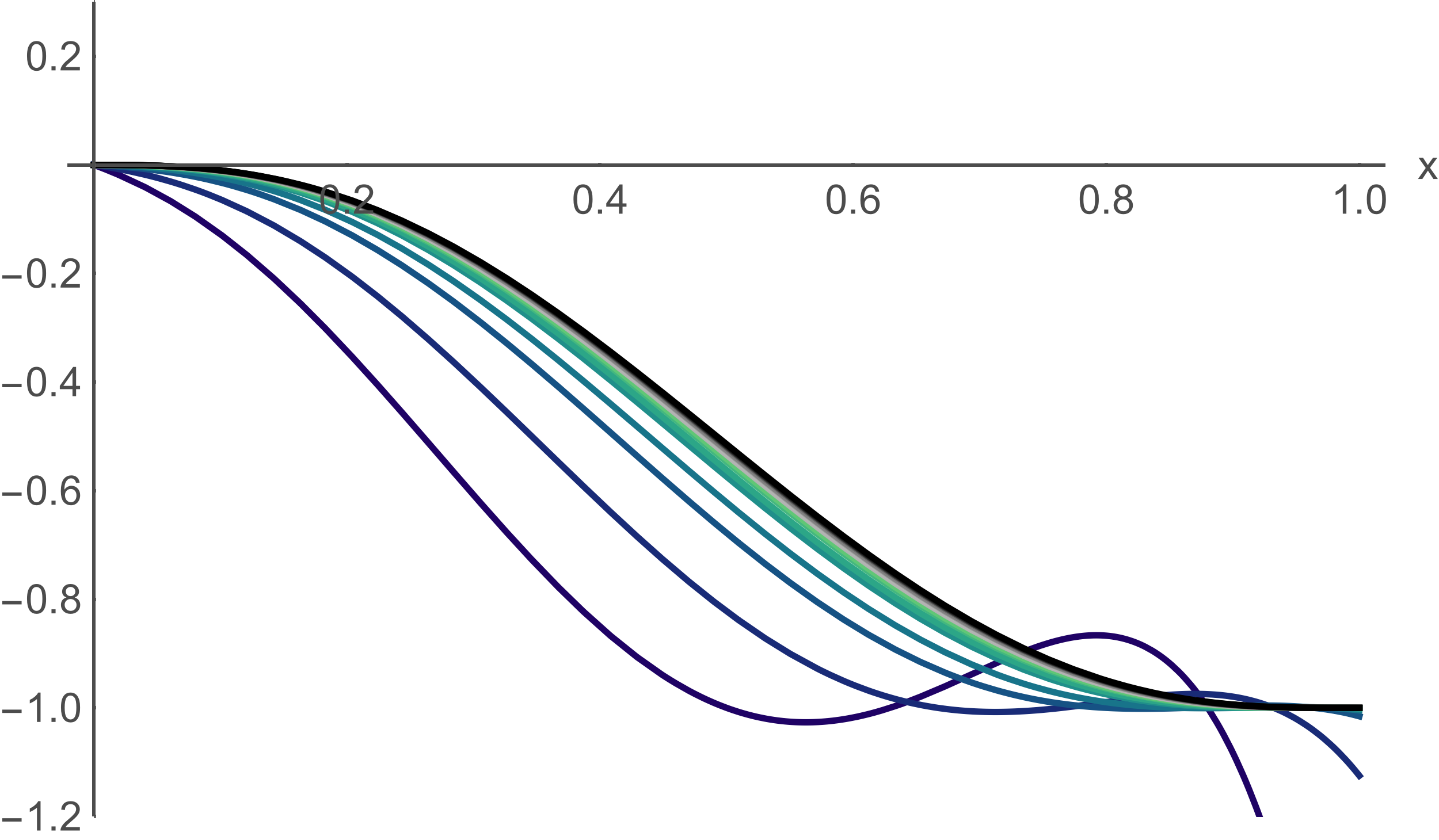}
}
\subfloat[$a=1,b=3,m=1,2,...,16,s=3$]{
  \includegraphics[width=65mm]{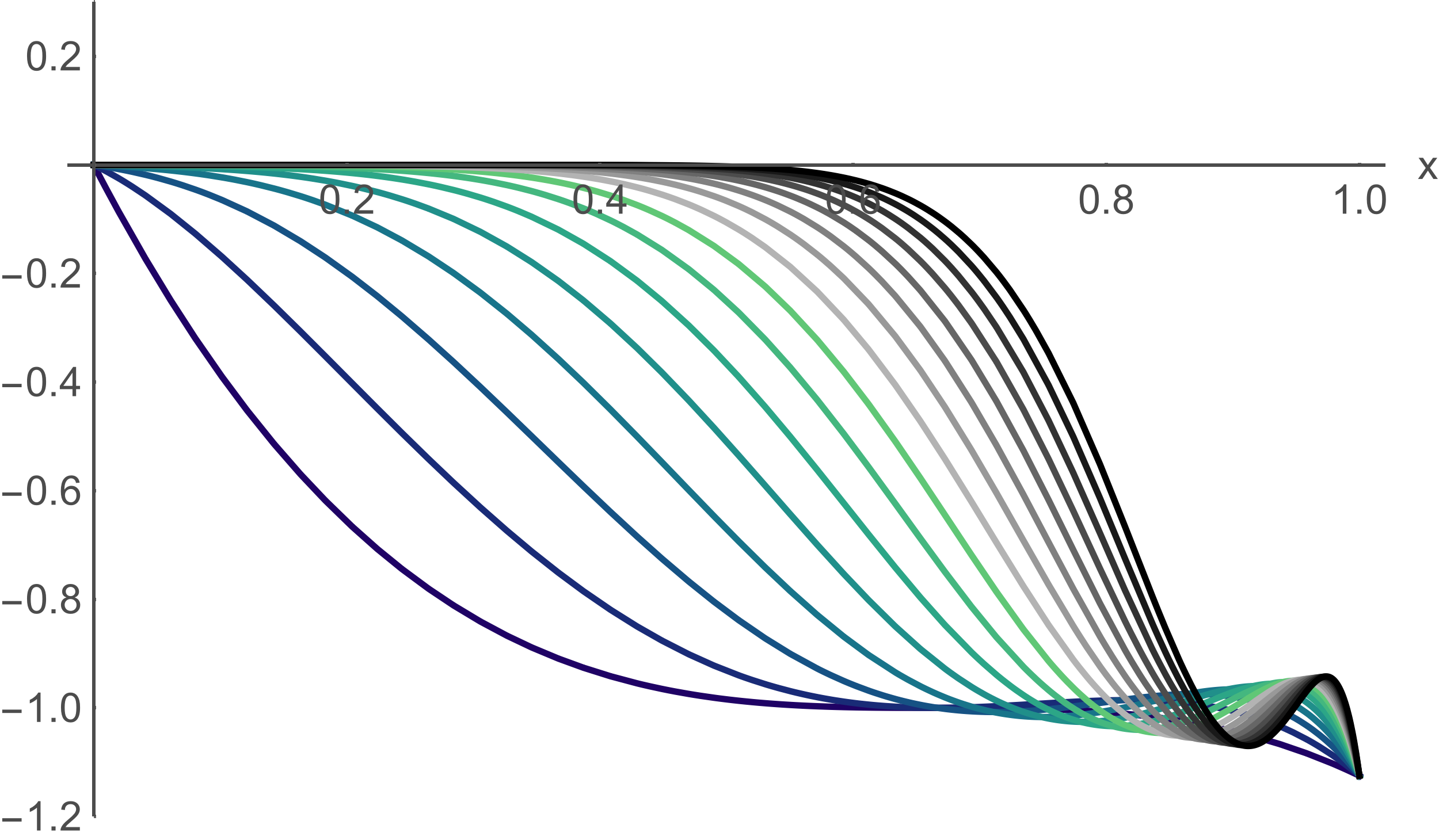}
}
\subfloat[$a=1,b=3,m=3,s=1,2,...,16$]{
  \includegraphics[width=65mm]{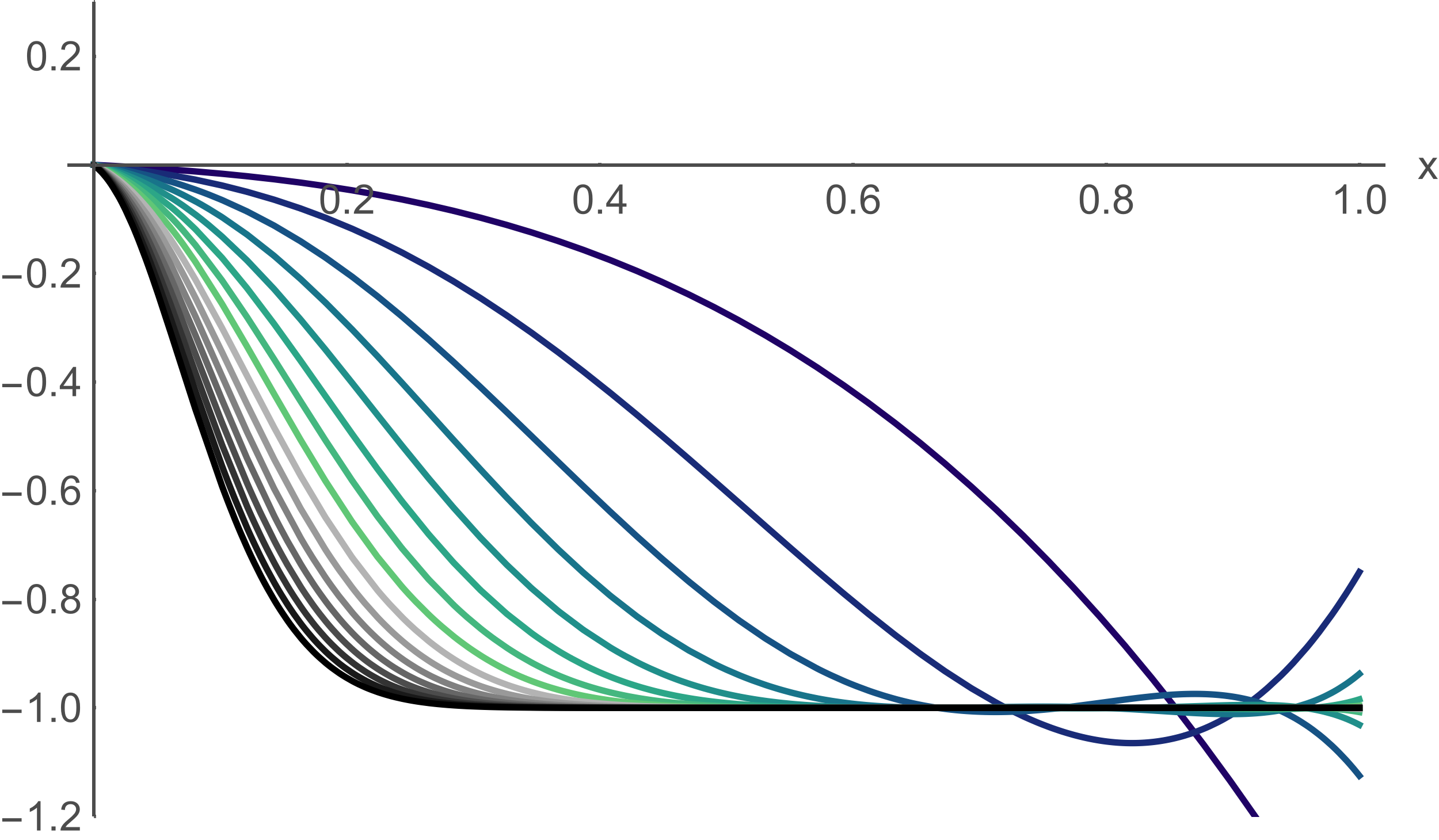}
}
}
\caption{A scaled version of polynomial \eqref{eq:mainpolys} for $a=1$ and different values of $b,m,s$, where shades of blue represent smaller values of the varying parameter (darkest is smallest) and shades of gray represent larger values (darkest is largest).}
\label{fig:conjecture2}
\end{figure}

We conclude with a conjecture that generalizes our result from Theorem~\ref{thm:main} based on experimental evidence. Unfortunately, our guess and then prove technique turned out to be ineffective for all other $a$ except for $(b-1)/b$, and we were unable to find non-trivial solutions for the recurrences that we were able to obtain for such $a$. However, as shown in Figures \ref{fig:conjecture} and \ref{fig:conjecture2}, there is reasonable evidence to show that the result holds.

\begin{conjecture}
Let $\tilde P_n$ be a scrambled $(0,m,s)$-net in base $b$ and $f\in L^2([0,1)^s)$ be a function whose base $b$ Walsh series decomposition satisfies \[\sigma_\BFk^2(f)=a^r x^k \alpha_f,\] where $a\in[0,1]$, $x\in[0,1)$, $k=k_1+\cdots+k_s$, $\alpha_f$ is a positive constant that depends on $f$ and $r$ is the number of non-zero coordinates of $\BFk\in\N^s$. Then \[\cov(f(\BFU_I),f(\BFU_J))\leq 0 \text{ and } \var(\hat I_n(f))\leq \var(\hat {I}_{MC,n}(f)).\]
\end{conjecture}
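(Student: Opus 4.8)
The plan is to prove Theorem~\ref{thm:maininequality} by induction on the dimension parameter $s$, exactly paralleling the structure already established for the special case $a=(b-1)/b$ but now keeping $a$ as a free parameter in $[0,1]$. First I would fix $b,m$ and establish the base case: when $s=1$, the inner sum over $r$ collapses to the single term $r=1$, so the polynomial \eqref{eq:mainpolys} reduces to $\sum_{k=1}^{m}\binom{1}{1}\binom{k-1}{0}a\,\Psi_b^1(1,c_m(k))(bx)^k$, and since $\Psi_b^1(1,c)$ is either $0$ or (for $c=0$) a single negative value, this is manifestly $\leq 0$ for $a,x\geq 0$. The substantive work is the inductive step, and following the template in the proof of Theorem~\ref{thm:maininequality} I would not try to show $Q_{s-1}\leq 0\Rightarrow Q_s\leq 0$ directly but rather show the difference $\Delta_s := P_{s-1}-P_s$ (where $P_s$ denotes \eqref{eq:mainpolys} with general $a$) is nonnegative on $[0,1/b)$, so that the polynomials are monotone decreasing in $s$ and the base case propagates.

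To compute $\Delta_s$ I would split the double sum \eqref{eq:mainpolys} along the $r$-index, write out $P_{s-1}-P_s$ coefficientwise in powers of $(bx)$, and use the Vandermonde/Pascal-type identity $\binom{s}{r}\binom{k-1}{r-1}-\binom{s-1}{r}\binom{k-1}{r-1}=\binom{s-1}{r-1}\binom{k-1}{r-1}$ to reorganize the difference. The key observation is that each $\Psi_b^s(r,c)$ is actually independent of $s$ — it depends only on $r$ and $c$ — so the $\Psi$-factors match up cleanly between the $P_{s-1}$ and $P_s$ terms and one is left with a sum whose sign is governed by the combinatorial prefactors and the nonnegative quantities $a^r$, $(bx)^k$. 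For the handled case $a=(b-1)/b$, the authors found the remarkably clean closed form $\Delta_s = x(b-1)(1-x)^s\sum_{i=0}^{m-1}\binom{i+s-1}{s-1}(bx)^i$; for general $a$ I would attempt the analogous creative-telescoping computation, feeding the (now four-parameter: $a,b,m,s$) holonomic summand into \textsc{CreativeTelescoping} from \textsc{HolonomicFunctions.m} to produce a recurrence in $s$ for $\Delta_s$, then solving it with \textsc{SolveRecurrence} from \textsc{Sigma.m}.

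The hard part, and the reason this remains a conjecture rather than a theorem, is precisely this last step: with $a$ kept symbolic the recurrence produced by creative telescoping is of higher order and larger polynomial degree, and — as the authors explicitly note in the concluding section — \textsc{Sigma.m} fails to return a \emph{non-trivial} closed-form solution (it finds only the zero solution or d'Alembertian pieces that do not reassemble into a usable sign-definite expression). So the obstruction is not conceptual but computational: one needs either a more powerful recurrence-solving strategy, a clever change of variables that lowers the order of the recurrence (perhaps separating the $c_m(k)=0$ and $c_m(k)=k-m>0$ regimes and handling each with a different ansatz), or a direct combinatorial/analytic identity for $\Delta_s$ that bypasses the symbolic pipeline entirely. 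A promising direction would be to guess — via \textsc{Guess.m} on numerical data generated for many $(a,b,m,s)$ — a closed form for $\Delta_s$ generalizing $x(b-1)(1-x)^s\sum_i\binom{i+s-1}{s-1}(bx)^i$, conjecture that it is the manifestly-nonnegative answer, and then verify it rigorously by checking it satisfies the telescoping-derived recurrence together with enough initial values. Figures~\ref{fig:conjecture} and~\ref{fig:conjecture2} suggest the answer for general $a$ will be messier (the erratic behaviour near $a=1$ indicates no single clean product formula), so the realistic expectation is that a full proof requires splitting into sub-cases on the relative sizes of $a$, $x$, and $1/b$, and that the $a$ close to $1$ regime is where positivity of $\Delta_s$ genuinely breaks down into a delicate estimate rather than an identity.
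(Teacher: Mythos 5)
The statement you are addressing is left \emph{unproved} in the paper: it is stated as a Conjecture, supported only by the experimental evidence in Figures~\ref{fig:conjecture} and~\ref{fig:conjecture2}, and the authors say explicitly that their guess-and-prove pipeline produces recurrences for general $a$ but that they could not extract non-trivial closed-form solutions from them. Your proposal is therefore not in conflict with the paper, but it is also not a proof: the entire load-bearing step --- establishing $\Delta_s\geq 0$ (or any other mechanism forcing non-positivity) for symbolic $a\in[0,1]$ --- is left as ``attempt the analogous creative-telescoping computation,'' and you yourself concede at the end that positivity of $\Delta_s$ may genuinely fail near $a=1$, which would invalidate the monotone-in-$s$ strategy outright rather than merely complicate it. A plan whose central inequality is acknowledged as possibly false is a research program, not a proof, so the conjecture remains exactly as open as the paper leaves it.

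Two technical points in your sketch deserve correction if you pursue this. First, the paper's induction for $a=(b-1)/b$ is carried out on the closed form $Q_s(b,m,x)$ obtained \emph{after} creative telescoping and recurrence solving, not on the raw polynomial \eqref{eq:mainpolys}; since $Q_s$ is only \emph{sign-equivalent} to \eqref{eq:mainpolys1}, with positive scaling factors that were discarded along the way, a nonnegative difference $Q_{s-1}-Q_s$ does not automatically yield $P_{s-1}-P_s\geq 0$ for the unscaled polynomials, and conversely your proposed coefficientwise manipulation of $P_{s-1}-P_s$ is a genuinely different computation from the one that produced the clean formula $x(b-1)(1-x)^s\sum_{i=0}^{m-1}\binom{i+s-1}{s-1}(bx)^i$. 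Second, with $\Delta_s:=P_{s-1}-P_s$ Pascal's rule gives $\binom{s-1}{r}-\binom{s}{r}=-\binom{s-1}{r-1}$, the negative of the identity you wrote, and you must also account for the mismatched summation ranges ($r\leq s-1$ versus $r\leq s$, and $k\leq m+s-2$ versus $k\leq m+s-1$). Your base case $s=1$ is correct: the polynomial reduces to $-a\sum_{k=1}^{m}(bx)^k\leq 0$ since $c_m(k)=0$ and $\Psi_b^1(1,0)=-1$ for all $k\leq m$. Your observation that $\Psi_b^s(r,c)$ does not actually depend on $s$ is also correct and is a reasonable starting point, but none of this closes the gap.
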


\textbf{Acknowledgements.} We are particularly grateful to Josef Dick, Christoph Koutschan, Peter Kritzer and Christiane Lemieux for taking time out of their busy schedules to guide us in the right direction at the beginning, and their subsequent encouragement towards the completion of this work. Both authors want to especially acknowledge Christoph for his valuable comments that improved this manuscript greatly. E.\ Wong would also like to thank Manuel Kauers and Veronika Pillwein for the opportunity to give a talk about this work at OPSFA and to Lin Jiu, Mehdi Makhul, Isabel Pirsic and Ali Uncu for some helpful commentary. E.\ Wong is supported by the Austrian Science Fund (FWF): F5011-N15. J.\ Wiart is supported by the Austrian Science Fund (FWF), Projects F5506-N26 and F5509-N26, which are parts of the Special Research Program ``Quasi-Monte Carlo Methods: Theory and Applications".

\pagebreak

\bibliography{currentdraft} 	

\begin{thebibliography}{15}
\expandafter\ifx\csname natexlab\endcsname\relax\def\natexlab#1{#1}\fi
\providecommand{\bibinfo}[2]{#2}
\ifx\xfnm\relax \def\xfnm[#1]{\unskip,\space#1}\fi
\bibitem[{Dick and Pillichshammer(2010)}]{DickPillichshammer10}
\bibinfo{author}{J.~Dick}, \bibinfo{author}{F.~Pillichshammer},
  \bibinfo{title}{Digital nets and sequences: Discrepancy theory and
  quasi-monte carlo integration}, \bibinfo{publisher}{Cambridge University
  Press, UK}, \bibinfo{year}{2010}.
\bibitem[{DLMF(2019)}]{DLMF}
\bibinfo{author}{DLMF}, \bibinfo{title}{{NIST} digital library of mathematical
  functions}, \bibinfo{year}{2019}. \bibinfo{note}{F.~W.~J. Olver, A.~B. {Olde
  Daalhuis}, D.~W. Lozier, B.~I. Schneider, R.~F. Boisvert, C.~W. Clark, B.~R.
  Miller and B.~V. Saunders, eds., http://dlmf.nist.gov/}.
\bibitem[{Kauers(2009)}]{Kauers09}
\bibinfo{author}{M.~Kauers}, \bibinfo{title}{Guessing handbook},
  \bibinfo{type}{Technical Report} \bibinfo{number}{09-07}, RISC Report Series,
  Johannes Kepler University, \bibinfo{address}{Linz, Austria},
  \bibinfo{year}{2009}.
  \bibinfo{note}{Http:/$\!$/www.risc.jku.at/\linebreak[0]research/\linebreak[0]combinat/\linebreak[0]software/\linebreak[0]Guess/}.
\bibitem[{Koutschan(2009)}]{Koutschan09}
\bibinfo{author}{C.~Koutschan}, \bibinfo{title}{Advanced applications of the
  holonomic systems approach}, Ph.D. thesis, Johannes Kepler University,
  \bibinfo{address}{Linz, Austria}, \bibinfo{year}{2009}.
\bibitem[{Koutschan(2010)}]{Koutschan10}
\bibinfo{author}{C.~Koutschan}, \bibinfo{title}{Holonomic{F}unctions user’s
  guide}, \bibinfo{type}{Technical Report} \bibinfo{number}{10-01}, RISC Report
  Series, Johannes Kepler University, \bibinfo{address}{Linz, Austria},
  \bibinfo{year}{2010}.
  \bibinfo{note}{Http:/$\!$/www.risc.jku.at/\linebreak[0]publications/\linebreak[0]download/\linebreak[0]risc\_3934/\linebreak[0]hf.pdf}.
\bibitem[{Lemieux(2017)}]{Lemieux17}
\bibinfo{author}{C.~Lemieux}, \bibinfo{title}{Negative dependence, scrambled
  nets, and variance bounds}, \bibinfo{journal}{Mathematics of Operations
  Research} \bibinfo{volume}{43} (\bibinfo{year}{2017})
  \bibinfo{pages}{228--251}.
\bibitem[{Niederreiter(1992{\natexlab{a}})}]{Niederreiter92}
\bibinfo{author}{H.~Niederreiter}, \bibinfo{title}{Low-discrepancy point sets
  obtained by digital constructions over finite fields},
  \bibinfo{journal}{Czechoslovak Math. J} \bibinfo{volume}{42}
  (\bibinfo{year}{1992}{\natexlab{a}}) \bibinfo{pages}{143--166}.
\bibitem[{Niederreiter(1992{\natexlab{b}})}]{Niederreiter92book}
\bibinfo{author}{H.~Niederreiter}, \bibinfo{title}{Random number generation and
  quasi-Monte Carlo methods}, volume~\bibinfo{volume}{63},
  \bibinfo{publisher}{SIAM CBMS-NSF Regional Conference Series in Applied
  Mathematics}, \bibinfo{year}{1992}{\natexlab{b}}.
\bibitem[{Owen(1995)}]{Owen95}
\bibinfo{author}{A.B. Owen}, \bibinfo{title}{Randomly permuted $(t,m,s)$-nets
  and $(t,s)$-sequences}, in: \bibinfo{editor}{H.~Niederreiter},
  \bibinfo{editor}{P.J. Shiue} (Eds.), \bibinfo{booktitle}{Monte Carlo and
  Quasi-Monte Carlo Methods in Scientific Computing}, volume
  \bibinfo{volume}{106}, \bibinfo{publisher}{Springer}, \bibinfo{address}{New
  York, NY}, \bibinfo{year}{1995}, pp. \bibinfo{pages}{299--317}.
\bibitem[{Owen(1997)}]{Owen97}
\bibinfo{author}{A.B. Owen}, \bibinfo{title}{Scrambled net variance for
  integrals of smooth functions}, \bibinfo{journal}{The Annals of Statistics}
  \bibinfo{volume}{25} (\bibinfo{year}{1997}) \bibinfo{pages}{1541--1562}.
\bibitem[{Owen(2003)}]{Owen03}
\bibinfo{author}{A.B. Owen}, \bibinfo{title}{Variance and discrepancy with
  alternative scramblings}, \bibinfo{journal}{ACM Transactions on Modeling and
  Computer Simulation} \bibinfo{volume}{13} (\bibinfo{year}{2003})
  \bibinfo{pages}{363--378}.
\bibitem[{Schneider(2007)}]{Schneider07}
\bibinfo{author}{C.~Schneider}, \bibinfo{title}{Symbolic summation assists
  combinatorics}, \bibinfo{journal}{S\'e\-mi\-naire Lothar\-ingien de
  Combinatoire} \bibinfo{volume}{56} (\bibinfo{year}{2007})
  \bibinfo{pages}{1--36}. \bibinfo{note}{Article B56b,
  http:/$\!$/www.risc.jku.at/\linebreak[0]research/\linebreak[0]combinat/\linebreak[0]software/\linebreak[0]Sigma/}.
\bibitem[{Walsh(1922)}]{Walsh22}
\bibinfo{author}{J.L. Walsh}, \bibinfo{title}{A closed set of normal orthogonal
  functions}, \bibinfo{journal}{Amer. J. Math.} \bibinfo{volume}{45}
  (\bibinfo{year}{1922}).
\bibitem[{Wiart et~al.(2019)Wiart, Lemieux and Dong}]{WiartLemieux19}
\bibinfo{author}{J.~Wiart}, \bibinfo{author}{C.~Lemieux},
  \bibinfo{author}{G.~Dong}, \bibinfo{title}{On the dependence structure of
  scrambled $(t,m,s)$-nets}, \bibinfo{journal}{arXiv e-prints}
  (\bibinfo{year}{2019}).
\bibitem[{Zeilberger(1991)}]{Zeilberger91}
\bibinfo{author}{D.~Zeilberger}, \bibinfo{title}{The method of creative
  telescoping}, \bibinfo{journal}{Journal of Symbolic Computation}
  \bibinfo{volume}{11} (\bibinfo{year}{1991}) \bibinfo{pages}{195--204}.

\end{thebibliography}

\end{document}